\renewenvironment{proof}[1][\proofname]{{\bfseries #1.} }{\qed}
\def\Cov{{\rm Cov\,}}
\newcommand{\Var}{{\rm Var}}
\newcommand{\Corr}{{\rm Corr}}
\newcommand{\e}{{\rm e}}
\def\authors#1{{ \begin{center} #1 \vspace{0pt} \end{center} } \smallskip}
\def\institution#1{{\sl \begin{center} #1 \vspace{0pt} \end{center} } }
\def\inst#1{\unskip $^{#1}$}
\def\title#1{{\huge\bf  \begin{center} #1 \vspace{0pt} \end{center}  } \smallskip}
\def\E{{\mathbb{ E}}}
\def\paref#1{(\ref{#1})}
\newtheorem{theorem}{Theorem}[section]
\newtheorem{proposition}[theorem]{Proposition}
\newtheorem{lemma}[theorem]{Lemma}
\newtheorem{conventions}[theorem]{Conventions}
\newtheorem{corollary}[theorem]{Corollary}
\newtheorem{definition}[theorem]{Definition}
\newtheorem{remark}[theorem]{Remark}
\newtheorem{example}[theorem]{Example}
\newtheorem{assu}[theorem]{Assumption}
\def\tr{\mathop{\rm tr}}
\begin{document}

\date{Jul 2021}

\title{\sc {Asymptotic distribution \\ of Nodal Intersections for ARW\\  against a Surface}}
\authors{\large Riccardo W. Maffucci\inst{\star,}\footnote{E-mail address: \texttt{riccardo.maffucci@epfl.ch} (Corresponding author).}, Maurizia Rossi\inst{\diamond,}\footnote{E-mail address: \texttt{maurizia.rossi@unimib.it}.}
}
\institution{\inst{\star}Institute of Mathematics, \'Ecole Polytechnique F\'ed\'erale de Lausanne\\
\inst{\diamond}Dipartimento di Matematica e Applicazioni, Universit\`a di Milano-Bicocca 
}

\begin{abstract}

We investigate Gaussian Laplacian eigenfunctions (Arithmetic Random Waves) on the three-dimensional standard flat torus, in particular the asymptotic distribution of the nodal intersection length against a fixed regular reference surface. Expectation and variance have been addressed by Maffucci (2019) who found that the expected length is proportional to the square root of the eigenvalue times the area of the surface, while the asymptotic variance only depends on the geometry of the surface, the projected lattice points being equidistributed on the two-dimensional unit sphere in the high-energy limit.  He also noticed that there are ``special'' surfaces, so-called static, for which the variance is of smaller order; however he did not prescribe the precise asymptotic law in this case.
In this paper, we study second order fluctuations of the nodal intersection length. Our first main result is a Central Limit Theorem for ``generic'' surfaces, while for static ones, a sphere or a hemisphere e.g., our main results are a non-Central Limit Theorem and a precise asymptotic law for the variance of the nodal intersection length, conditioned on the existence of so-called well-separated sequences of Laplacian eigenvalues. It turns out that, in this regime, the nodal area investigated by Cammarota (2019) is asymptotically fully correlated with the length of the nodal intersections against any sphere. The main ingredients for our proofs are the Kac-Rice formula for moments, the chaotic decomposition for square integrable functionals of Gaussian fields, and some arithmetic estimates that may be of independent interest. 

\smallskip

\noindent\textbf{Keywords and Phrases:} Arithmetic Random Waves; nodal intersections; limit theorems; Wiener chaos; lattice points.

\smallskip

\noindent \textbf{AMS Classification:} 60G60; 60B10; 60D05; 35P20; 58J50; 11P21.

\end{abstract}

\section{Introduction}

The Laplace eigenvalues (or energy levels) of the three-dimensional standard flat torus $\mathbb T^3:= \mathbb R^3/ \mathbb Z^3$ are of the form $4\pi^2 m$, $m\in S$, where 
$$
S := \lbrace m : m = a_1^2 + a_2^2 + a_3^2, a_i \in \mathbb Z\rbrace. 
$$
Recall that a natural number $m$ is representable as a sum of three squares if and only if $m\ne 4^l (8k+7)$, for $k,l$ non-negative integers \cite{harwri}. 
For $m\in S$, let  
$$
\mathcal E = \mathcal E_m := \lbrace \mu=(\mu_1, \mu_2,\mu_3)\in \mathbb Z^3 : |\mu|^2 := \mu_1^2 + \mu_2^2 + \mu_3^2 = m\rbrace
$$
be the set of lattice points on the two-dimensional sphere of radius $\sqrt{m}$. It is invariant w.r.t. to the group
\begin{equation}
\label{group}
\mathcal{W}=\mathcal{W}_3, \qquad |\mathcal{W}|=48
\end{equation}
of \textit{signed permutations} (cf. \cite{rudwi2}), consisting of coordinate permutations and sign change of any coordinate.
The cardinality of $\mathcal E$ will be denoted by 
$$
N = N_m := |\mathcal E_m | = r_3(m),
$$
where $r_3(m)$ is the number of ways to express $m$ as a sum of three squares. 
For $m\ne 0,4,7 \ (\text{\rm mod\ } 8)$, one has 
\begin{equation}\label{Nbehavior}
(\sqrt m)^{1-\epsilon} \ll N \ll (\sqrt{m})^{1+\epsilon}
\end{equation}
for all $\epsilon >0$, see \cite[\S 1]{bosaru}. The condition $m\ne 0,4,7 \ (\text{\rm mod\ } 8)$, ensuring $N\to +\infty$, is natural, see the discussion in \cite[\S 1.1]{Maf18}.

It is well-known that the projected lattice points $\mathcal E_m/\sqrt{m}$ become equidistributed on the unit two-dimensional sphere $\mathcal S^2$ as  $m\to +\infty$ such that $m\ne 0,4,7 \ (\text{\rm mod\ } 8)$. More precisely, let $g$ be a $C^2$-smooth function on $\mathcal{S}^2$. For $m\to\infty$, $m\not\equiv 0,4,7 \pmod 8$, we have (\cite[Lemma 4.3]{Maf18}; cf. {\cite[Lemma 8]{pascbo}})
\begin{equation}
\label{equidi}
\frac{1}{N}\sum_{\mu\in\mathcal{E}}g\left(\frac{\mu}{|\mu|}\right)
=
\int_{z\in\mathcal{S}^2}g(z)\frac{dz}{4\pi}
+
O\left(\frac{1}{m^{1/28-\epsilon}}\right),
\end{equation}
where the constant involved in the $O$-notation only depends on $g$.

The complex-valued Laplace eigenfunctions of eigenvalue $4\pi^2 m$ may be written as 
\begin{equation}\label{def_aw}
G(x) = G_m(x) = \sum_{\mu\in \mathcal E_m} c_\mu e^{i 2\pi\langle \mu, x\rangle }, \qquad x\in \mathbb T^3,
\end{equation}
where $c_\mu\in \mathbb C$ are Fourier coefficients; the condition 
$ \overline{c_\mu} = c_{-\mu} $ ensures $G$ in (\ref{def_aw}) to be real-valued. 

\subsection{Nodal intersections}

The nodal set of $G$ in (\ref{def_aw}) is its zero-locus
$$
G^{-1}(0) = \lbrace x\in \mathbb T^3 : G(x) = 0 \rbrace.
$$
It is well-known that $G^{-1}(0)$ is a union of smooth surfaces, possibly together with a set of lower dimension (curves and points). Let $\Sigma\subset \mathbb T^3$ be a fixed smooth surface, we are interested in the intersection 
$$
G^{-1}(0) \cap \Sigma
$$
in the high-energy limit ($m\to +\infty$). Assume that $\Sigma$ is real-analytic, with nowhere zero Gauss-Kronecker curvature, then \cite{brgafa, brnoda} there exists $m_\Sigma$ such that for every $m\ge m_\Sigma$ 
\begin{itemize}
    \item the surface $\Sigma$ is not contained in the nodal set of any eigenfunction $G$;
    \item the $1$-dimensional Hausdorff measure of the intersection has the upper bound 
    $$
    h_1(G^{-1}(0)\cap \Sigma) \le C_\Sigma \sqrt{m}
    $$
   for some constant $C_\Sigma >0$ depending on the surface;
   \item for every eigenfunction $G$ it holds that
   $$
   G^{-1}(0) \cap \Sigma \ne \emptyset.
   $$
\end{itemize}

\subsection{Arithmetic random waves} 

We work with random Laplace eigenfunctions: it suffices to choose the Fourier coefficients $c_\mu$ in (\ref{def_aw}) to be random. Let us fix once for all a probability space $(\Omega, \mathcal F, \mathbb P)$; we denote by $\mathbb E$ expectation under $\mathbb P$. 
\begin{definition}\label{def_arw}
Let $m\in S$. The $m$-th \textbf{arithmetic random wave} on $\mathbb T^3$ is 
$$
F(x)=F_m(x):= \frac{1}{\sqrt{N_m}} \sum_{\mu\in \mathcal E_m}
a_\mu e^{i 2\pi\langle \mu, x\rangle }, \qquad x\in \mathbb T^3, $$
where $a_\mu$ are standard complex Gaussian random variables\footnote{i.e., $a_\mu = b_\mu + i c_\mu$, where $b_\mu$ and $c_\mu$ are i.i.d. real Gaussian random variables with zero mean and variance $1/2$.}, and independent save for the relations $\overline{a_\mu} = a_{-\mu}.$
\end{definition}
It is straightforward that $F$ is a stationary centered Gaussian random field on $\mathbb T^3$ whose covariance kernel is 
\begin{equation}\label{cov}
\mathbb E[F(x) F(y)] = \frac{1}{N} \sum_{\mu\in \mathcal E_m} e^{i 2\pi \langle \mu, x-y\rangle}=: r_m(x-y)=r(x-y), \qquad x,y\in \mathbb T^3.
\end{equation}
Arithmetic random waves on the three-dimensional flat torus have recently received some attention, see e.g. \cite{RWY16, benmaf, Maf18, Cam19, Maf20, Not20}.
\begin{definition}\label{def_Sigma} Let $\Sigma\subset \mathbb T^3$ be a fixed compact regular\footnote{i.e., a smooth two-dimensional submanifold of $\mathbb T^3$, possibly with boundary (see \cite{docarm, serne2}).} toral surface, of (finite) area $A:=|\Sigma|$.  Assume that $\Sigma$ admits a smooth normal vector locally. The \textbf{nodal intersection length} of the $m$-th arithmetic random wave $F$ against $\Sigma$ is the random variable
$$
\mathcal L = \mathcal L_m := h_1(F^{-1}(0) \cap \Sigma),
$$
where $h_1$ denotes the $1$-dimensional Hausdorff measure.
\end{definition} 
We are interested in the asymptotic behaviour of $\mathcal L$ as $m\to +\infty$.

\subsubsection{Prior work} 

In \cite{Maf18} expectation and variance of $\mathcal L$ in Definition \ref{def_Sigma} have been investigated. It holds that \cite[Proposition 1.2]{Maf18}
\begin{equation}\label{mean}
\mathbb E[\mathcal L] = \frac{\pi}{\sqrt 3} \sqrt{m} \cdot A
\end{equation}
for every $m\in S$. In addition assume that $\Sigma$ has nowhere zero Gauss-Kronecker curvature and define 
\begin{equation}\label{def_I}
 \mathcal I=\mathcal I_{2,\Sigma} := \iint_{\Sigma^2} \langle n(\sigma), n(\sigma')\rangle^2\,d\sigma d\sigma',
\end{equation}
where $n(\sigma)$ is the unit normal vector to $\Sigma$ at the point $\sigma$. 
As $m\to\infty$ s.t. $m\not\equiv 0,4,7 \pmod 8$, one has \cite[Theorem 1.3]{Maf18}
\begin{equation}
\label{vargen1}
\text{Var}(\mathcal{L})=\frac{\pi^2}{60}\frac{m}{N}\left[3\mathcal{I}-A^2+O\left(\frac{1}{m^{1/28-o(1)}}\right)\right];
\end{equation}
furthermore \cite[Proposition 1.4]{Maf18}
\begin{equation}\label{bounds}
\frac{A^2}{3}\le \mathcal I \le A^2
\end{equation}
ensuring the leading coefficient of (\ref{vargen1}) to be always non-negative and bounded. From (\ref{mean}) and (\ref{vargen1}) we deduce that the nodal intersection length is asymptotically concentrated at the mean value, i.e., for all $\epsilon >0$, as $m\to +\infty$ s.t. $m\not\equiv 0,4,7 \pmod 8$
$$
\mathbb P \left ( \left | \frac{\mathcal L}{\pi \sqrt{m} A/\sqrt 3} - 1\right | > \epsilon \right )  \mathop{\to} 0.
$$

There exist ``special" surfaces for which
\begin{equation}\label{eq_s}
\mathcal I = \frac{A^2}{3};
\end{equation}
for these (\ref{vargen1}) only gives that the true asymptotic behaviour of $\text{Var}(\mathcal L)$ is of lower order of magnitude than $m/N$ \cite[Section 1.6]{Maf18}. 
For corresponding results in the two-dimensional case see \cite{rudwig}.

\subsection{Statement of the main results}

Our principal results concern the limiting laws for the nodal intersection length in Definition \ref{def_Sigma}. Theorem \ref{th1} asserts the Central Limit Theorem for $\mathcal L$ in the high-energy limit under the assumption that the leading term $3\mathcal I - A^2$ of the variance in (\ref{vargen1}) is strictly positive. Theorem \ref{th2} investigates the alternative situation (\ref{eq_s}) with a non-Gaussian limiting distribution for so-called well separated sequences of energy levels (see Definition \ref{welsep} and Assumption \ref{theassu}), and among other things determines the true asymptotic behaviour of $\text{Var}(\mathcal L)$ in the latter case, in particular complementing (\ref{vargen1}). For corresponding results in the two dimensional case see \cite{roswig}.
\begin{conventions}\rm
In this manuscript we will denote by $\mathop{\to}^d$ (resp. $\mathop{=}^d$) convergence (resp. equality) in distribution, $\mathcal N(\lambda, \sigma^2)$ will stand for a standard Gaussian random variable with mean $\lambda\in \mathbb R$ and variance $\sigma^2$. 
\end{conventions}
\begin{theorem}\label{th1}
Let $\Sigma\subset \mathbb T^3$ be a surface as in Definition \ref{def_Sigma}, with nowhere zero Gauss-Kronecker curvature. Assume moreover that $3 \mathcal I > A^2$. Then the limiting distribution of the nodal intersection length is Gaussian, i.e., as $m\to +\infty$ s.t. $m\not\equiv 0,4,7 \pmod 8$,
\begin{equation}
     \frac{\mathcal L - \mathbb E[\mathcal L]}{\sqrt{\text{\rm Var}(\mathcal L)}} \mathop{\longrightarrow}^d Z,
\end{equation}
where $Z\sim \mathcal N(0,1)$. 
\end{theorem}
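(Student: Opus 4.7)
The plan is to prove the CLT via the Wiener chaos decomposition of $\mathcal{L}$, showing that the projection onto the second chaos is asymptotically dominant and itself Gaussian in the limit.

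First I would set up a Kac--Rice integral representation: since $F|_\Sigma$ is a.s.\ non-degenerate in the high-energy regime, one has the $L^2(\Omega)$-identity
$$
\mathcal{L} \;=\; \int_\Sigma \|\nabla_\Sigma F(\sigma)\|\, \delta_0(F(\sigma))\, d\sigma,
$$
with $\nabla_\Sigma$ the tangential gradient and $\delta_0$ understood in the approximating sense. Since $\mathcal{L}(F)=\mathcal{L}(-F)$, only even-order chaoses appear in the Wiener expansion
$$
\mathcal{L} - \mathbb{E}[\mathcal{L}] \;=\; \sum_{q \geq 1} \mathcal{L}[2q].
$$

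Second, I would compute the projection onto the second chaos explicitly. Expanding $\|y\|$ for $y\in \mathbb{R}^2$ and $\delta_0$ in Hermite polynomials, $\mathcal{L}[2]$ becomes a linear combination of $\int_\Sigma H_2(F(\sigma))\, d\sigma$ and of integrals of quadratic forms in the tangential gradient against $\Sigma$. Squaring, integrating over $\Sigma\times\Sigma$ against the covariance $r_m$ in (\ref{cov}), and invoking the equidistribution (\ref{equidi}) of projected lattice points on $\mathcal{S}^2$, one should obtain
$$
\text{Var}(\mathcal{L}[2]) \;=\; \frac{\pi^2}{60}\,\frac{m}{N}\,(3\mathcal{I} - A^2)\,(1+o(1)),
$$
matching the leading term of (\ref{vargen1}) and being of exact order $m/N$ under the hypothesis $3\mathcal{I} > A^2$.

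Third, and this is the main technical obstacle, I would show that higher-order chaoses contribute negligibly, i.e.
$$
\sum_{q \geq 2} \text{Var}(\mathcal{L}[2q]) \;=\; o\!\left(\frac{m}{N}\right),
$$
so that $\mathcal L-\mathbb E[\mathcal L]$ is $L^2$-equivalent to $\mathcal L[2]$. By orthogonality of chaoses and the Kac--Rice second moment formula, this reduces to bounding integrals over $\Sigma\times\Sigma$ of powers of $r_m$ and of its partial derivatives; these are governed by $k$-correlation sums on $\mathcal{E}_m$, which one controls via (\ref{equidi}) applied to symmetric test functions on $(\mathcal{S}^2)^k$, together with the refined arithmetic estimates promised in the abstract. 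An analogous 2D program was carried out by Rossi--Wigman \cite{roswig}.

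Finally, $\mathcal{L}[2]$ is an element of the second Wiener chaos, i.e.\ a quadratic form in the i.i.d.\ Gaussian coefficients $\{a_\mu\}$. By the Nualart--Peccati fourth moment theorem the CLT therefore reduces to checking
$$
\frac{\mathbb{E}[\mathcal{L}[2]^4]}{\bigl(\mathbb{E}[\mathcal{L}[2]^2]\bigr)^2} \;\longrightarrow\; 3,
$$
equivalently that no single lattice point in $\mathcal{E}_m$ carries a non-vanishing fraction of the Hilbert--Schmidt norm of the underlying symmetric kernel. This again follows from (\ref{equidi}) applied to appropriate symmetric polynomials on $\mathcal{S}^2$. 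Combining this CLT for $\mathcal L[2]$ with the second-chaos variance asymptotics and the negligibility of higher chaoses via Slutsky's lemma yields Theorem \ref{th1}.
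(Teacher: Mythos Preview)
Your overall strategy---chaos expansion, dominance of the second chaos, CLT for $\mathcal L[2]$---matches the paper's, but two of your steps are carried out differently.

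For step three (negligibility of higher chaoses), you propose to bound $\sum_{q\ge 2}\Var(\mathcal L[2q])$ directly via integrals of powers of $r_m$ and its derivatives. The paper avoids this entirely: since the full variance asymptotic \eqref{vargen1} is already known from \cite{Maf18}, it suffices to compute $\Var(\mathcal L[2])$ and observe that it matches the leading term of $\Var(\mathcal L)$; orthogonality of the chaoses then gives the tail bound for free. Your direct route is in principle workable but is exactly the kind of computation that the prior variance result already encapsulates, so invoking \eqref{vargen1} is both shorter and cleaner.

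For step four (the CLT for $\mathcal L[2]$), you invoke the Nualart--Peccati fourth moment theorem. The paper instead decomposes $\mathcal L[2]=\mathcal L^a[2]+\mathcal L^b[2]$ (Lemma~\ref{lem_2}), where $\mathcal L^a[2]$ is the diagonal part---an explicit sum $\sum_{\mu\in\mathcal E^+}(|a_\mu|^2-1)\,c_\mu$ over \emph{independent} summands---and $\mathcal L^b[2]$ is shown to be negligible via the Bourgain--Sarnak--Rudnick pair correlation bound. A CLT for $\mathcal L^a[2]$ then follows from Lindeberg's criterion, the max-coefficient condition being immediate since each $c_\mu$ is uniformly bounded and there are $N/2\to\infty$ terms. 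Both approaches work here; the paper's is more elementary once the diagonal/off-diagonal splitting is in hand, while yours would treat $\mathcal L[2]$ as a whole but require verifying the fourth-moment condition (which, to be fair, also amounts to showing no single frequency dominates).
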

Let us now investigate the alternative situation (\ref{eq_s}). For our purposes it is convenient to rewrite \eqref{vargen1} as \cite[Proof of Theorem 1.3]{Maf18}
\begin{equation}
\label{eq:varh}
\text{Var}(\mathcal{L})=\frac{\pi^2}{24}\frac{m}{N}\left(9\mathcal H(\nu) - A^2\right ) + O\left (\frac{1}{m^{3/16-o(1)}}\right),
\end{equation}
where 
$\nu$ denotes the probability measure on the unit two-dimensional sphere $\mathcal S^2$ induced by the set of frequencies $\mathcal E$, namely 
\begin{equation}\label{nu}
    \nu =\nu_m := \frac{1}{N}\sum_{\mu\in \mathcal E}
 \delta_{\mu/\sqrt m},
 \end{equation}
and 
\begin{eqnarray}\label{H}
  \mathcal H(\nu) &=& \mathcal H_\Sigma(\nu)  := \frac{1}{N } \sum_{\mu\in \mathcal E}\int_\Sigma \int_\Sigma \left \langle \frac{\mu}{|\mu|}, n(\sigma)\right \rangle^2 \left \langle \frac{\mu}{|\mu|}, n(\sigma')\right \rangle^2\, d\sigma  d\sigma'\cr
  &=& \int_{\mathcal S^2} \left ( \int_\Sigma \left \langle \theta, n(\sigma)\right \rangle^2\, d\sigma \right )^2\,\nu(d\theta)
\end{eqnarray}
where $n(\sigma)$ is the unit normal vector to $\Sigma$ at the point $\sigma$. Indeed, in light of lattice point equidistribution \eqref{equidi}, as $m\to\infty$, $m\not\equiv 0,4,7 \pmod 8$, one has the estimate (\cite[Lemma 5.3]{Maf18})
\begin{equation}
\label{IH}
\mathcal{H}(\nu)=\frac{1}{15}(A^2+2\mathcal{I})+O\left(\frac{1}{m^{1/28-\epsilon}}\right).
\end{equation}

One may also define $\mathcal H(\eta)=\mathcal H_\Sigma(\eta)$ for any probability measure $\eta$ on $\mathcal S^2$ invariant w.r.t. the group \eqref{group}, with $\eta$ in place of $\nu$ on the r.h.s. of (\ref{H}).
\begin{definition}\label{def_static}
Let $\Sigma\subset \mathbb T^3$ be a surface as in Definition \ref{def_Sigma}, with nowhere zero Gauss-Kronecker curvature. We say that $\Sigma$ is a \textbf{static surface} if 
\begin{equation*}
\mathcal{H}_\Sigma(\eta)=\frac{A^2}{9}
\end{equation*}
for any probability measure $\eta$ on $\mathcal S^2$ invariant w.r.t. the group \eqref{group}.
\end{definition}
For static surfaces, in particular $\mathcal{I}=A^2/3$.
%
For instance any sphere or hemisphere are static. For static surfaces $\Sigma\subset\mathbb{T}^3$ of area $A$, as $m\to\infty$, $m\not\equiv 0,4,7 \pmod 8$, the bound
\begin{equation*}
\text{\rm Var}(\mathcal{L})=O\left(\frac{m}{N^2}N^{5/8+\epsilon}\right)
\end{equation*}
is straightforward from \eqref{eq:varh}. The variance is of smaller order than for non-static surfaces. 

Let $\ell\ge 2$ be an integer, the set of $\ell$-th lattice point correlations, or $\ell$-th correlations for short, is 
\begin{equation}\label{l-corr}
    \mathcal C(\ell) = \mathcal C_m(\ell) := \left \lbrace (\mu^{(1)}, \mu^{(2)}, \dots, \mu^{(\ell)})\in \mathcal E^\ell : \sum_{i=1}^\ell \mu^{(i)} = 0 \right \rbrace.
\end{equation}
\begin{definition}
\label{welsep}
We call a sequence of eigenvalues \textbf{well separated} if for $\ell=2,4,6$ we have the bounds
\begin{equation}\label{frakS}
\mathfrak{S}_\ell:=\frac{1}{N^{\ell}}\sum_{\mathcal{E}^\ell\setminus\mathcal{C}(\ell)}\frac{1}{\left|\sum_{i=1}^{\ell}\mu_i\right|^2}=o\left(\frac{1}{N^{2}}\right)
\end{equation}
along the sequence, where $\mathcal C(\ell)$ is the set of $\ell$-th correlations in (\ref{l-corr}).
\end{definition}
This is the critical regime for these arithmetic sums: on one hand, a result by Bourgain, Sarnak, and Rudnick \cite[Theorem 1.1]{bosaru} implies (\cite[Section 5.2]{Maf18})
\begin{equation*}
\mathfrak{S}_2=O\left(\frac{N^\epsilon}{N^2}\right).
\end{equation*}
On the other hand, the statement
\begin{equation*}
\mathfrak{S}_2=O\left(\frac{1}{N^{2}}\right)
\end{equation*}
is false \cite{prirud} -- see Section \ref{future} for further details.

\begin{assu}
\label{theassu}
There exists a well separated sequence of eigenvalues.
\end{assu}

In order to state our second main result we need to introduce some more notation. For $k$ a natural number we define 
\begin{equation}
\label{Ik}
\mathcal I_k=\mathcal{I}_{k,\Sigma}:=\iint_{\Sigma^2}\langle {n}(\sigma), {n}(\sigma')\rangle^k\,d\sigma d\sigma',
\end{equation}
consistently with (\ref{def_I}). Plainly, $\mathcal I_2 = \mathcal I$.

\begin{theorem}\label{prop1}
\label{varthm}
Let $\Sigma\subset\mathbb{T}^3$ be a static surface as in Definition \ref{def_static} of area $A$. Then we have, as $m\to\infty$, $m\not\equiv 0,4,7 \pmod 8$ along a well separated sequence of eigenvalues,
\begin{equation}
\label{var}
\text{\rm Var}(\mathcal{L})=\frac{\pi^2}{9600}\frac{m}{N^2}\left[81 \mathcal{I}_{4}+35 A^2+o(1)\right].
\end{equation}
\end{theorem}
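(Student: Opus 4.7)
My plan is to work with the Wiener chaos decomposition of $\mathcal L$ and to identify the fourth chaos as the source of the leading-order variance in the static case. Starting from the Kac-Rice representation
$$
\mathcal L = \int_\Sigma \delta_0(F(\sigma))\,|\nabla_\Sigma F(\sigma)|\,d\sigma,
$$
understood as the $L^2(\bP)$-limit of its $\varepsilon$-regularisations, the integrand expands into Hermite polynomials in the standardised Gaussian vector $(F,\nabla_\Sigma F)$ on $\Sigma$, and one reads off the orthogonal decomposition
$$
\mathcal L-\EE[\mathcal L] = \sum_{q\ge 1}\mathcal L[2q],
$$
where $\mathcal L[2q]$ is the projection onto the $2q$-th Wiener chaos generated by $F$ (odd chaoses vanish by the symmetry $F\mapsto -F$). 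Consequently $\Var(\mathcal L)=\sum_{q\ge 1}\Var(\mathcal L[2q])$, and the task reduces to identifying the dominant chaos and controlling the tail.

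A direct Hermite computation, parallel to the one in \cite{Maf18}, gives $\Var(\mathcal L[2]) = \frac{\pi^2}{24}\frac{m}{N}\bigl(9\mathcal H(\nu)-A^2\bigr)$ modulo an arithmetic error controlled by $\mathfrak S_2$. Because $\nu = \nu_m$ is $\mathcal W$-invariant by construction, Definition \ref{def_static} forces $9\mathcal H(\nu)-A^2\equiv 0$ in the static case, so the well separated hypothesis $\mathfrak S_2=o(N^{-2})$ gives $\Var(\mathcal L[2]) = o(m/N^2)$. Hence the second chaos no longer contributes at the relevant order, and the fourth chaos is the candidate leading term.

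The heart of the argument is then the explicit evaluation of $\Var(\mathcal L[4])$. Writing $\mathcal L[4]$ as a quadruple sum over $(\mu^{(1)},\dots,\mu^{(4)})\in \mathcal E^4$ and using orthogonality of multiple Wiener-Ito integrals, the variance splits into a diagonal piece supported on the set of $4$-correlations $\mathcal C(4)$ and a remainder majorised by $\mathfrak S_4 \cdot m = o(m/N^2)$ under Assumption \ref{theassu}. The diagonal piece reduces to a double integral over $\Sigma^2$ of a quartic form in $\mu^{(i)}/|\mu^{(i)}|$ paired with the normals $n(\sigma),n(\sigma')$, with weights given by the explicit Hermite coefficients of the density $(x,v)\mapsto \delta_0(x)|v|$ for the Gaussian vector $(F,\nabla_\Sigma F)$ on $\Sigma$. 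The $\mathcal W$-invariance of $\nu$ combined with the equidistribution \eqref{equidi} applied to the fourth-degree spherical polynomials that appear collapses these spherical integrals into the linear combination $81\,\mathcal I_4 + 35\,A^2$ in the sense of \eqref{Ik}, with the prefactor $\pi^2/9600$ absorbing the Gaussian normalisations and combinatorial multiplicities.

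Finally, the tail $\sum_{q\ge 3}\Var(\mathcal L[2q])$ is handled via the standard covariance formula, which expresses each summand as an integral over $\Sigma^2$ of a polynomial of degree $2q$ in the normalised covariance $r$ and its first and second surface derivatives; Cauchy-Schwarz together with hypercontractivity bound these integrals by multiples of $\mathfrak S_6$ for $q=3$ and by a geometric-type tail for $q\ge 4$, so the well separated hypothesis $\mathfrak S_6 = o(N^{-2})$ forces the whole tail to be $o(m/N^2)$. I expect the main technical obstacle to lie in the precise fourth-chaos variance computation: the Hermite coefficients depend on the $1+2$-dimensional Gaussian structure $(F,\nabla_\Sigma F)|_\Sigma$, and the ensuing quartic spherical integrals must be organised carefully enough — exploiting $\mathcal W$-invariance and the explicit form of the fourth-degree spherical harmonics — to separate the two genuinely independent pieces $\mathcal I_4$ and $A^2$, rather than lump them into a single $\mathcal I$-type quantity as occurred at the chaos-2 level. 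A secondary difficulty is that the tail-chaos control has to be achieved under only the rather mild well separated hypothesis, without any stronger Diophantine input.
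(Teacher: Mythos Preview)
Your strategy differs from the paper's in an important structural way. The paper does \emph{not} prove Theorem~\ref{varthm} through the chaos expansion; it goes via a refined approximate Kac--Rice formula (Proposition~\ref{KR}): the two-point correlation function $K_{2;\Sigma}$ is Taylor-expanded to one order beyond \cite{Maf18}, the resulting integrals over $\Sigma^2$ of $r^2,r^4,\tr X,\tr(Y'Y),\tr(Y'YY'Y),\ldots$ are evaluated arithmetically (Proposition~\ref{arith}, Lemma~\ref{5.1}), and the residual error is $m\cdot O(\mathfrak S_2+\mathfrak S_4+\mathfrak S_6)$, which dies along a well separated sequence. The chaos expansion enters only \emph{afterwards}: the paper computes $\Var(\mathcal L[4])$ independently (\S\ref{sec_4chaos}) and observes it matches the Kac--Rice answer, which is how Proposition~\ref{prop_var4} --- and hence negligibility of the tail --- is deduced \emph{from} Theorem~\ref{varthm}, not the other way round.

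Your plan reverses this logic, and steps 1 and 2 are fine (the paper itself carries out the $\Var(\mathcal L[4])$ computation you describe). The genuine difficulty is step 3. The claim that ``Cauchy--Schwarz together with hypercontractivity bound these integrals by multiples of $\mathfrak S_6$ for $q=3$ and by a geometric-type tail for $q\ge 4$'' does not close on its own: for each fixed $q\ge 3$ one can bound $\Var(\mathcal L[2q])$ by $C_q$ times sixth-moment integrals using $|r|^{2q}\le |r|^6$, but summing over $q$ requires controlling $\sum_q C_q$, and near the diagonal of $\Sigma^2$ (the singular set, where $r\to 1$) the termwise bounds blow up. The only way to handle that region is to know that the two-point function $K_{2;\Sigma}$ is uniformly bounded there and that $\mathrm{meas}(S)\ll\mathcal R_6(m)$ --- which is precisely the Kac--Rice input (\cite[Lemmas~3.11--3.12]{Maf18}) underlying Proposition~\ref{KR}. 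So the step you flag as ``a secondary difficulty'' is actually the crux, and carrying it out rigorously forces you back onto the paper's route: Kac--Rice first, chaos comparison second.
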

Note that $81\mathcal I_4 + 35A^2>0$. Let us now define the following random variable 
\begin{equation}\label{M}
 {\mathcal M} := \frac{1}{\sqrt{\frac{8}{225}(81 \mathcal I_4 + 35A^2)}}\bigg (\sum_{i\le j} c_{ij} (X_{ij}^2-1) + \sum_{\substack{i\le j, l\le k\\ (i,j)\ne (l,k)}} c_{ijlk} X_{ij}X_{lk}\bigg ),
\end{equation}
where $X_{ij}$, $i,j\in \lbrace 1,2,3\rbrace, i\le j$ are i.i.d. standard Gaussian random variables, and ($\int = \int_{\Sigma}$)
\begin{eqnarray*}
&&c_{11}=0,\quad 
c_{12}=-\frac{1}{5}\int(3( n_1^2-n_3^2)^2+4n_2^2)d\sigma,\quad c_{13}=-\int(\frac{11}{15}-2n_2^2+\frac{9}{5}n_2^4)d\sigma,\cr
&&c_{22}=-\frac{4}{5}\int (n_1^2+3n_2^2n_3^2)d\sigma,\quad 
c_{23}=-\frac{4}{5}\int (n_2^2+3n_1^2n_3^2)d\sigma,\quad c_{33}=-\frac{4}{5}\int (n_3^2+3n_1^2n_2^2)d\sigma,\cr
&&c_{1112}=-\frac{16\sqrt{15}}{15}\int( n_1^2-n_3^2)d\sigma,\quad 
c_{1113}=-\frac{16\sqrt{5}}{15}\int(1-3n_2^2)d\sigma,\quad c_{1122}=\frac{32\sqrt{15}}{15}\int n_2n_3d\sigma,\cr
&&c_{1123}=\frac{32\sqrt{15}}{5}\int n_1n_3d\sigma, \quad 
c_{1133}=\frac{32\sqrt{15}}{5}\int n_1n_2d\sigma,\quad
c_{1213}=\frac{2\sqrt{3}}{15}\int (n_1^2-n_3^2)(1+9n_2^2)d\sigma,\cr
&&c_{1222}=\frac{4}{5}\int n_2n_3(2+3(n_1^2-n_3^2))d\sigma,\quad 
c_{1223}=\frac{12}{5}\int n_1n_3(n_1^2-n_3^2)d\sigma,\cr &&c_{1233}=\frac{4}{5}\int n_1n_2(-2+3(n_1^2-n_3^2))d\sigma,\quad c_{1322}=\frac{4\sqrt{3}}{15}\int n_2n_3(5-9n_2^2)d\sigma, \cr
&&c_{1323}=\frac{4\sqrt{3}}{15}\int n_1n_3(-1-9n_2^2)d\sigma, \quad 
c_{1333}=\frac{4\sqrt{3}}{15}\int n_1n_2(5-9n_2^2)d\sigma,\cr
&&c_{2223}=\frac{8}{5}\int n_1n_2(1-3n_3^2)d\sigma, \quad c_{2233}=\frac{8}{5}\int n_1n_3(1-3n_2^2)d\sigma,\quad c_{2333}=\frac{8}{5}\int n_2n_3(1-3n_1^2)d\sigma.
\end{eqnarray*}
\begin{theorem}\label{th2}
Let $\Sigma\subset\mathbb{T}^3$ be a static surface as in Definition \ref{def_static} of area $A$. Then we have, as $m\to\infty$, $m\not\equiv 0,4,7 \pmod 8$ along a well separated sequence of eigenvalues,
\begin{equation}
   \frac{\mathcal L - \mathbb E[\mathcal L]}{\sqrt{\text{\rm Var}(\mathcal L)}} \mathop{\longrightarrow}^d \mathcal M,
    \end{equation}
    where $\mathcal M$ is defined as in (\ref{M}).
\end{theorem}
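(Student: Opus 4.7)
The plan is to combine a Kac--Rice-type representation for $\mathcal L$ with the Wiener chaos decomposition and to isolate the chaotic projection that dominates the variance in the static regime. Writing formally
$$\mathcal L = \int_\Sigma \delta(F(\sigma))\,\|\nabla_\Sigma F(\sigma)\|\,d\sigma,$$
with $\nabla_\Sigma$ the tangential gradient on $\Sigma$, and expanding $\delta(\cdot)\|\cdot\|$ in Hermite polynomials of the Gaussian vector $(F(\sigma),\nabla_\Sigma F(\sigma))$, one obtains
$$\mathcal L - \mathbb E[\mathcal L] = \sum_{q\ge 1}\mathcal L[q], \qquad \mathcal L[2q+1]\equiv 0,$$
the odd-order vanishing following from the symmetry $F \stackrel{d}{=} -F$. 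In the generic setting (Theorem \ref{th1}) the quadratic chaos $\mathcal L[2]$ dominates and a Nualart--Peccati fourth-moment argument yields a CLT. For a static surface in the sense of Definition \ref{def_static}, by contrast, the leading term of $\text{Var}(\mathcal L[2])$ collapses: unpacking its variance one finds (up to $O(m^{-3/16+o(1)})$) precisely the quantity $\frac{\pi^2}{24}\frac{m}{N}(9\mathcal H(\nu)-A^2)$ of \eqref{eq:varh}, which is $o(m/N^2)$ since $\mathcal H_\Sigma(\eta)=A^2/9$ on every signed-permutation-invariant measure, in particular on $\nu_m$.

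The next step is to prove Theorem \ref{prop1} by identifying $\mathcal L[4]$ as the dominant chaos. Expanded out, $\mathcal L[4]$ is a finite sum of fourth-order Hermite integrals in $F$ and its partial derivatives, with coefficients given by surface integrals of quartic polynomials in the components of $n(\sigma)$. Its variance is a quadruple sum over $\mathcal E^4$ indexed by the correlations of \eqref{l-corr}; the well-separated assumption \eqref{frakS} with $\ell=4,6$ forces all non-paired frequency tuples to contribute $o(m/N^2)$, so that only the diagonal configurations $(\mu^{(1)},-\mu^{(1)},\mu^{(2)},-\mu^{(2)})$ (modulo signed permutations) survive. Combined with the equidistribution \eqref{equidi} of $\nu_m$ on $\mathcal S^2$, this produces $\text{Var}(\mathcal L[4]) = \frac{\pi^2}{9600}\frac{m}{N^2}(81\mathcal I_4+35A^2+o(1))$; the factor $\mathcal I_4$ appears because, once the paired lattice sums are reduced by equidistribution, the residual geometric integral collapses exactly to $\iint_{\Sigma^2}\langle n(\sigma),n(\sigma')\rangle^4\,d\sigma d\sigma'$. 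A parallel estimate, using \eqref{frakS} to bound Hermite kernels at higher chaoses, gives $\sum_{q\ge 6}\text{Var}(\mathcal L[q])=o(m/N^2)$, so the fourth chaos asymptotically carries the full variance of $\mathcal L$.

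The final step is to identify the limit law as $\mathcal M$. Under the well-separated assumption the surviving contributions to $\mathcal L[4]$ factorize into products of two linear statistics in the Gaussian coefficients $a_\mu$, which can be organized along the orbits of the signed permutation group $\mathcal W_3$ of \eqref{group} acting on symmetric $2$-tensors on $\mathbb R^3$ (a six-dimensional space, matching the count of the Gaussians $X_{ij}$ with $i\le j$). By independence of the $a_\mu$'s and the orbit-wise equidistribution on $\mathcal S^2$, these six statistics converge jointly to independent standard Gaussians and play the role of $X_{ij}$ in \eqref{M}; the resulting quadratic form has the structure appearing in \eqref{M}, with coefficients emerging as surface integrals of quartic normal-vector polynomials produced by the Hermite expansion. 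The main obstacle is this last matching step: one must carefully track the Hermite coefficients through the Kac--Rice expansion, use the static condition together with the $\mathcal W_3$-orbit structure to decouple cross-terms down to the stated six-Gaussian quadratic form, and verify that the resulting surface integrals coincide with the explicit constants $c_{ij}$ and $c_{ijlk}$ in \eqref{M}. Normalizing by $\sqrt{\text{Var}(\mathcal L)}$ then delivers $\frac{\mathcal L-\mathbb E[\mathcal L]}{\sqrt{\text{Var}(\mathcal L)}} \mathop{\longrightarrow}^d \mathcal M$.
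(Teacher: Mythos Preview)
Your overall architecture matches the paper's: chaos expansion, $\mathcal L[2]$ collapses on static surfaces, $\mathcal L[4]$ carries the variance and the limiting distribution as a quadratic form in six Gaussians. There is, however, one substantive divergence. You propose to prove Theorem \ref{prop1} and the dominance of $\mathcal L[4]$ by computing $\text{Var}(\mathcal L[4])$ and then bounding the tail $\sum_{q\ge 3}\text{Var}(\mathcal L[2q])=o(m/N^2)$ directly ``using \eqref{frakS} to bound Hermite kernels at higher chaoses''. The paper does \emph{not} do this: it obtains $\text{Var}(\mathcal L)$ itself via a refined approximate Kac--Rice formula (Proposition \ref{KR}), Taylor expanding the two-point correlation function to one extra order and controlling the resulting fourth- and sixth-moment integrals through lattice-point arithmetic (Proposition \ref{arith}, Lemma \ref{5.1}, and the correlation bounds \eqref{fourcorr}, \eqref{sixcorr}). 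Only after $\text{Var}(\mathcal L)$ is known independently does the paper compute $\text{Var}(\mathcal L[4])$ (Proposition \ref{prop_var4}, via Lemma \ref{4dec}) and match the two, so that orthogonality of Wiener chaoses gives the tail negligibility for free. Your direct tail bound is left unjustified, and it is not clear that the assumptions \eqref{frakS} for $\ell\le 6$ alone would suffice, since $\mathcal L[2q]$ involves $2q$-fold frequency sums.

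A second imprecision concerns the limit law. The six natural statistics $W_{ij}=\frac{1}{m\sqrt{N/2}}\sum_{\mu\in\mathcal E^+}(|a_\mu|^2-1)\mu_i\mu_j$ do \emph{not} converge to independent standard Gaussians; their limiting covariance $\Sigma_Z$ (Lemma \ref{clt}) is nondiagonal (for instance $\mathbb E[Z_{11}Z_{22}]=1/15$). The paper performs an explicit orthogonal diagonalization $\Sigma_Z=O D O^t$ (Remark \ref{rem_diag}, equation \eqref{o}) to produce the independent $X_{ij}$, and it is this concrete change of basis, carried out in the proof of Proposition \ref{prop_law4}, that generates the specific constants $c_{ij},c_{ijlk}$ in \eqref{M}. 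Your ``orbit-wise'' description is morally related (the eigenspaces of $\Sigma_Z$ are indeed $\mathcal W_3$-isotypic components of $\mathrm{Sym}^2\mathbb R^3$), but the argument as written still requires that explicit diagonalization step.
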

\begin{example}\rm For $\Sigma$ the unit sphere, $A=4\pi$ and $\mathcal{I}_{4,\Sigma}=16\pi^2/5$ so that as $m\to\infty$, $m\not\equiv 0,4,7 \pmod 8$ along a well separated sequence of eigenvalues,
\begin{equation*}
\text{Var}(\mathcal{L})\sim \frac{32\pi^4}{375}\frac{m}{N^2}.
\end{equation*}
Furthermore, the only non-zero coefficients in \eqref{M} are
$$c_{12}=c_{13}=c_{22}=c_{23}=c_{33} 
= -\frac{128}{75}\pi,$$ 
hence still along a well separated sequence of eigenvalues 
\begin{equation}\label{limitesfera}
   \frac{\mathcal L - \mathbb E[\mathcal L]}{\sqrt{\text{\rm Var}(\mathcal L)}} \mathop{\longrightarrow}^d \frac{5 -\chi^2(5)}{\sqrt{10}},
\end{equation}
where $\chi^2(5)$ is a chi-square random variable with $5$ degrees of freedom. 
\end{example}
\begin{remark}\label{remark1}\rm
It is worth noticing that the limiting distribution in (\ref{limitesfera}) of the nodal intersection length for arithmetic random waves in dimension $3$ against the unit sphere along a well separated sequence of eigenvalues coincides with the limiting distribution of their nodal area \cite[Theorem 1]{Cam19} as $m\to\infty$, $m\not\equiv 0,4,7 \pmod 8$. We observe the same phenomenon in dimension two: the nodal length \cite[Theorem 1.1]{MPRW} and the number of nodal intersection for arithmetic random waves against the unit circle \cite[Example 1.4]{roswig} along a ``generic'' sequence of eigenvalues share the same limiting distribution (a chi-square with $2$ degrees of freedom). 
\end{remark}
Actually, the connection pointed out in Remark \ref{remark1} is deeper \cite{primar}.  
\begin{corollary}\label{corD}
Let $\Sigma\subset\mathbb{T}^3$ be the two-dimensional unit sphere, and $\mathcal A=\mathcal A_m$ denote the area of the nodal set $F^{-1}_m(0)$. Then we have, as $m\to\infty$, $m\not\equiv 0,4,7 \pmod 8$ along a well separated sequence of eigenvalues, 
\begin{equation*}
    \Corr(\mathcal L, \mathcal A) \longrightarrow 1. 
\end{equation*}
\end{corollary}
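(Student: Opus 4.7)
The plan is to apply Wiener chaos decomposition and show that, along a well-separated sequence, $\mathcal{L}_m - \bE[\mathcal{L}_m]$ and $\mathcal{A}_m - \bE[\mathcal{A}_m]$ become asymptotically proportional in $L^2(\bP)$, so that their covariance saturates the Cauchy--Schwarz bound.

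First, I would expand both functionals into Wiener chaoses via Kac--Rice, writing
\begin{equation*}
\mathcal{L}_m - \bE[\mathcal{L}_m] = \sum_{q\ge 1}\mathcal{L}_m[2q], \qquad \mathcal{A}_m - \bE[\mathcal{A}_m] = \sum_{q\ge 1}\mathcal{A}_m[2q]
\end{equation*}
(only even chaoses contribute, by the symmetry $F\mapsto -F$ which leaves $F^{-1}(0)$ invariant). The computation producing Theorem \ref{th2} above, together with \cite[Theorem 1]{Cam19}, shows that for $\Sigma = \mathcal{S}^2$ the fourth chaos is variance-dominant in both cases:
\begin{equation*}
\Var(\mathcal{L}_m) = \Var(\mathcal{L}_m[4])(1+o(1)), \qquad \Var(\mathcal{A}_m) = \Var(\mathcal{A}_m[4])(1+o(1)).
\end{equation*}

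Second, I would locate both fourth-chaos projections inside a common one-dimensional subspace. The worked example after Theorem \ref{th2} shows that (\ref{M}) collapses on the unit sphere to
\begin{equation*}
\mathcal{L}_m[4] = -\frac{128\pi}{75}\sum_{(i,j)\in I}(X_{ij}^2 - 1) + o_{L^2}\!\left(\sqrt{\Var(\mathcal{L}_m)}\right),
\end{equation*}
with $I = \{(1,2),(1,3),(2,2),(2,3),(3,3)\}$. The analogous chaos expansion in \cite{Cam19} realises $\mathcal{A}_m[4]$ as a scalar multiple of the \emph{very same} combination $\sum_{(i,j)\in I}(X_{ij}^2 - 1)$ of the \emph{very same} Gaussians $X_{ij}$: both are built from the isotropic Hermite expansion of $F$ and $\nabla F$ at the frequencies $\mu\in\mathcal{E}_m$, and the full $SO(3)$-symmetry of the sphere forces their fourth-chaos projections onto this Hermite basis to be collinear. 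Hence
\begin{equation*}
\mathcal{L}_m[4] = \lambda_m\, \mathcal{A}_m[4] + o_{L^2}\!\left(\sqrt{\Var(\mathcal{L}_m)}\right), \qquad \lambda_m = \sqrt{\Var(\mathcal{L}_m)/\Var(\mathcal{A}_m)} + o(1) > 0.
\end{equation*}

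Third, orthogonality of Wiener chaoses gives $\Cov(\mathcal{L}_m, \mathcal{A}_m) = \Cov(\mathcal{L}_m[4], \mathcal{A}_m[4]) + \sum_{q\ge 3}\Cov(\mathcal{L}_m[2q], \mathcal{A}_m[2q])$. By Step 2 the leading term equals $\lambda_m\Var(\mathcal{A}_m[4]) + o(\sqrt{\Var(\mathcal{L}_m)\Var(\mathcal{A}_m)}) = \sqrt{\Var(\mathcal{L}_m)\Var(\mathcal{A}_m)}(1+o(1))$, while chaos-by-chaos Cauchy--Schwarz together with Step 1 bounds the remainder by $o(\sqrt{\Var(\mathcal{L}_m)\Var(\mathcal{A}_m)})$. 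Dividing yields $\Corr(\mathcal{L}_m, \mathcal{A}_m)\to 1$. The main obstacle is Step 2: one must match the explicit Hermite expansion of $\mathcal{L}_m[4]$ encoded in (\ref{M}), specialised to the sphere, with the expansion of $\mathcal{A}_m[4]$ from \cite{Cam19}, and verify genuine $L^2$-collinearity rather than mere equality in distribution. The full rotational symmetry of $\mathcal{S}^2$ is essential here; for a generic static surface one would only obtain coincidence of the limiting laws, which is insufficient for asymptotic full correlation.
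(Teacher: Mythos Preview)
Your overall strategy matches the paper's: reduce to the fourth chaos via variance domination, then show $\mathcal{L}_m[4]$ and $\mathcal{A}_m[4]$ are asymptotically proportional in $L^2$. The paper carries this out exactly as you outline in Steps~1 and~3.

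The gap is in Step~2, and you have in fact identified it yourself in your final paragraph without closing it. Your displayed formula $\mathcal{L}_m[4] = -\tfrac{128\pi}{75}\sum_{(i,j)\in I}(X_{ij}^2-1) + o_{L^2}$ is a category error: the $X_{ij}$ in (\ref{M}) are abstract i.i.d.\ Gaussians realising the \emph{limiting} law, not random variables defined on the same probability space as $F_m$. Writing $\mathcal{L}_m[4]$ as a polynomial in the $X_{ij}$ plus an $L^2$ remainder has no meaning; what (\ref{M}) and the worked example give you is only convergence in distribution, which, as you note, is insufficient for full correlation. Your appeal to $SO(3)$-symmetry is heuristic and does not by itself produce $L^2$-collinearity.

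The paper closes this gap by working with the finite-$m$ random variables $W_{ij}(m) = m^{-1}(N/2)^{-1/2}\sum_{\mu\in\mathcal{E}^+}(|a_\mu|^2-1)\mu_i\mu_j$, which are functionals of the \emph{actual} Gaussian coefficients $a_\mu$. Example~\ref{ex4sfera} computes explicitly, for $\Sigma=\mathcal{S}^2$,
\[
\mathcal{L}_m[4] = c_m\Big[2 + \Big(\sum_i W_{ii}\Big)^2 - 3\sum_{i,j}W_{ij}^2 + o_{L^2(\bP)}(1)\Big],
\]
and formula (5.3) of \cite{Cam19} gives $\mathcal{A}_m[4]$ as a different scalar multiple of the \emph{same} bracket in the \emph{same} $W_{ij}$. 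This is genuine $L^2$-proportionality, not coincidence of limits, and it comes from the explicit chaos expansions rather than from an abstract symmetry argument. (A minor point: your decomposition of $\Cov(\mathcal{L}_m,\mathcal{A}_m)$ omits the $q=1$ term $\Cov(\mathcal{L}_m[2],\mathcal{A}_m[2])$; it is indeed negligible by Cauchy--Schwarz and the static hypothesis, but should be mentioned.)
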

We do believe that the same phenomenon holds true in dimension two: for ``generic'' sequences of eigenvalues, there is asymptotic \emph{full} correlation between the nodal length \cite{MPRW} and the number of nodal intersections against the unit circle \cite{roswig} for arithmetic random waves on the two-dimensional standard flat torus. 
%

\subsection{Future directions}\label{future}

To prove Assumption \ref{theassu}, we would like to construct, or even show existence of, well separated sequences. This seems to be a difficult problem. In fact, there are arbitrarily large values of $m$ satisfying \cite{prirud}
\begin{equation*}
\mathfrak{S}_2=\Omega\left(\frac{\log(m)}{N^2}\right),
\end{equation*}
say. Indeed, for $m$ such that $m-1=x^2+y^2$ is the sum of two squares, the lattice points $(x,y,\pm 1)$ have distance two, hence
\begin{equation*}
\sum_{\mu\neq\mu'}\frac{1}{\left|\mu-\mu'\right|^2}\geq\sum_{\substack{(x,y)\in\mathbb{Z}^2\\x^2+y^2=m-1}}\frac{1}{2^2}=\frac{r_2(m-1)}{4},
\end{equation*}
where $r_2$ is the arithmetic function counting the number of ways to express a number as sum of two integer squares. It is well-known that $r_2$ is not bounded by any power of $\log$ \cite[Theorems 337 and 338]{harwri}. One can draw similar conclusions for $\mathfrak{S}_4, \mathfrak{S}_6$.

\subsection*{Acknowledgements}

The authors would like to thank Igor Wigman for suggesting this problem and this collaboration. Many thanks to Jacques Benatar, Matthew De Courcy-Ireland, Domenico Marinucci, Zeev Rudnick, Maryna Viazovska, Richard Wade, Nadav Yesha for helpful discussions.
The research leading to this work received fundings from the LMS Grant \emph{Research in Pairs}, scheme 4 (Mar-Sep 2018). 

The research of R.M. has been partially supported by the Engineering \& Physical Sciences Research Council (EPSRC) Fellowship EP/M002896/1 held by Dmitry Belyaev, and by Swiss National Science Foundation project 200021\_184927. The research of M.R. has been partially supported by the FSMP and the ANR-17-CE40-0008 Project \emph{Unirandom}.

\section{Outline of the paper}

\subsection{On the proofs of the main results}

The proofs of Theorem \ref{th1} and Theorem \ref{th2} rely on the \emph{chaotic expansion} for the nodal intersection length (see \cite[\S 2]{nopebook} for a complete discussion on Wiener chaos and related topics). This technique has been exploited for studying a wide variety of functionals of Gaussian processes, see e.g. \cite{kraleo, marwig11, EL16, MPRW, roswig, CM18, Cam19, Not20, Not21} and the references therein, however we shall encounter some additional difficulties, see below for a preview. The starting point is the formal expression
\begin{equation}\label{formal}
\mathcal L = \int_{\Sigma} \delta_0(F(\sigma)) | \nabla_{\Sigma} F(\sigma)|\, d\sigma, 
\end{equation}
 where $\delta_0$ denotes the Dirac mass at zero. Here $\nabla_{\Sigma} F$ stands for the surface gradient of $F$, i.e.
\begin{equation}\label{surface_gradient}
\nabla_\Sigma F(\sigma) := \nabla F(\sigma) - \langle \nabla F(\sigma), n(\sigma)\rangle n(\sigma), \qquad \sigma\in \Sigma,
\end{equation}
and $| \cdot |$ denotes the Euclidean norm in $\mathbb R^3$. 
One of the above mentioned additional difficulties is due to the fact that the components of $\nabla_\Sigma F(\sigma)$ are \emph{not} independent random variables, as explained in \S \ref{sec_chaos}. 
The finite-variance random variable $\mathcal L$ in (\ref{formal}) admits the Wiener-\^Ito chaotic expansion (see \S \ref{sec_chaos}) of the form 
\begin{equation}\label{wi}
\mathcal L = \sum_{q=0}^{+\infty} \mathcal L[2q],
\end{equation}
the above series converging in $L^2(\mathbb P)$. The core of this expansion is roughly the fact that the family of Hermite polynomials \cite[\S 1.4]{nopebook}
\begin{equation}\label{defH}
   H_q(t) = (-1)^q \text{e}^{t^2/2} \frac{d^q}{dt^q} \text{e}^{-t^2/2},\qquad q\in \mathbb N,
\end{equation}
is an orthonormal basis for the space of square integrable functions on the real line with respect to the Gaussian density.  
In particular, $\mathcal L[2q]$ is the orthogonal projection of $\mathcal L$ onto the so-called $2q$-th Wiener chaos, $\mathcal L[2q]$ and $\mathcal L[2q']$ are uncorrelated random variables whenever $q\ne q'$, and $\mathcal L[0] = \mathbb E[\mathcal L]$. 

Under the assumptions in Theorem \ref{th1}, evaluating the second chaotic projection $\mathcal L[2]$ yields that the variance of the nodal intersection length (\ref{vargen1}) is asymptotic, in the high-energy limit, to the variance of $\mathcal L[2]$. The latter result and the orthogonality of the Wiener chaoses imply that the distribution of $\mathcal L[2]$ asymptotically dominates the series on the right hand side of (\ref{wi}), and a standard Central Limit Theorem result for $\mathcal L[2]$ allows to infer the statement of Theorem \ref{th1}. 

Assume now that the surface is static according to Definition \ref{def_static}. In order to obtain the true asymptotic behaviour of $\text{\rm Var}(\mathcal L)$, we inspect the proof of the approximate Kac-Rice formula \cite[\S 3]{Maf18}, and obtain one extra term in the Taylor expansion of the two-point correlation function, see \S \ref{sec_KR}. One of the main difficulties is how to control ``off-diagonal'' terms coming from the fourth moment (and the sixth one) of the covariance kernel $r$ in (\ref{cov}), and specifically the quantity
\begin{equation}\label{quantity}
\frac{1}{N^2}\sum_{(\mu,\mu',\mu'',\mu''')\in\mathcal E^4\setminus \mathcal C(4)} \left | \int_{\Sigma} e^{i 2\pi \langle \mu+\mu'+\mu''+\mu''', \sigma\rangle}\, d\sigma \right |^2,
\end{equation}
where $\mathcal C(4)$ is the set of fourth lattice point correlations, see (\ref{l-corr}). 
Under Assumption \ref{theassu}, the quantity (\ref{quantity}) is $o(1)$ at least for one subsequence of energy levels, so that we can establish Theorem \ref{prop1} for such eigenvalues.
It turns out that the leading term in the chaotic expansion (\ref{wi}) for the nodal intersection length is no longer the projection onto the second chaos, but $\mathcal L[4]$, the projection  onto the fourth one. A precise analysis of the latter allows to get its asymptotic non-Gaussian distribution in (\ref{M}), thus concluding the proof of Theorem \ref{th2}. 

\subsection{Kac-Rice formulae}

Given a random field, call $\mathcal{V}$ the measure of its nodal set. When certain assumptions are met, the moments of $\mathcal{V}$ may be computed via Kac-Rice formulas \cite[Theorems 6.8 and 6.9]{azawsc}. In \cite{Maf18}, Kac-Rice formulas for a random field defined on a {\em surface} were developed. For the second moment, the {\bf approximate Kac-Rice formula} \cite[Proposition 1.7]{Maf18} was found for surfaces of nowhere vanishing Gauss-Kronecker curvature. The problem of computing the nodal intersection length variance \eqref{vargen1} for arithmetic waves was thus reduced to estimating the second moment of the covariance function $r$ \eqref{cov} and of its various first and second order derivatives. The error term in \eqref{vargen1} comes from bounding the fourth moment of $r$ and of its derivatives.

In the present paper, we prove a more precise approximate Kac-Rice, where the main term comes from the \textit{fourth} moment of $r$ and its derivatives, and the error is controlled via the \textit{sixth} moment.

\begin{proposition}[Kac-Rice approximate formula]
\label{KR}
For a static 
surface $\Sigma$, 
we have
\begin{multline*}
\Var(\mathcal{L})=M
\iint_{\Sigma^2}d\sigma d\sigma'
\left\{
\frac{1}{8}r^2
+\frac{1}{16}\tr(X)
+\frac{1}{16}\tr(X')
+\frac{1}{32}\tr(Y'Y)
+\frac{3}{32}r^4
\right.
\\
\left.
+\frac{1}{2^{11}}
\left[32\tr(X)\tr(X')-16\tr(XYY')-16\tr(X'Y'Y)-24(\tr X)^2-24(\tr X')^2
+2\tr(Y'YY'Y)
\right.\right.
\\
\left.\left.
+(\tr Y'Y)^2-8\tr(X)\tr(Y'Y)-8\tr(X')\tr(Y'Y)
\right]
+\frac{1}{64}
\left[2r^2\tr(X)+2r^2\tr(X')+r^2\tr(Y'Y)\right]
\right\}
\\+m\cdot o\left(\frac{1}{N^2}\right)+m\cdot O_\Sigma(\mathfrak{S}_6).
\end{multline*}
\end{proposition}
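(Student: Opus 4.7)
The starting point is the exact Kac--Rice formula for the second factorial moment of $\mathcal{L}$, namely
\begin{equation*}
\mathbb{E}[\mathcal{L}^2]=\iint_{\Sigma^2} K_2(\sigma,\sigma')\,d\sigma d\sigma',
\end{equation*}
where $K_2(\sigma,\sigma')=\phi_{F(\sigma),F(\sigma')}(0,0)\cdot\mathbb{E}\bigl[|\nabla_\Sigma F(\sigma)|\,|\nabla_\Sigma F(\sigma')|\,\big|\,F(\sigma)=F(\sigma')=0\bigr]$; the proof of this identity in the setting of a regular toral surface is the approximate Kac--Rice formula \cite[Proposition 1.7]{Maf18}. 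Subtracting $\mathbb{E}[\mathcal{L}]^2=\iint K_1(\sigma)K_1(\sigma')\,d\sigma d\sigma'$ produces an explicit double integral for $\Var(\mathcal{L})$.

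The plan is to compute this integrand by Gaussian regression. Condition on $F(\sigma)=F(\sigma')=0$: since $F$ is centered Gaussian with covariance $r$ in \eqref{cov}, the conditional covariance of the tangential pair $(\nabla_\Sigma F(\sigma),\nabla_\Sigma F(\sigma'))$ is obtained by a Schur complement in the joint Gram matrix, whose blocks are $X,X'$ (variance of each surface gradient) and $Y,Y'$ (cross-covariance blocks built from first and second derivatives of $r$). The joint density at the origin is $\frac{1}{2\pi\sqrt{1-r^2}}$. I would then Taylor-expand the conditional expectation $\mathbb{E}[|V_\sigma||V_{\sigma'}|\mid F=F'=0]$ jointly in the small quantities $r, Y, Y'$, keeping \emph{all} terms of total order $\leq 4$ in these and controlling the remainder by total order $\geq 5$.

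The 0th and 2nd order contributions reproduce the approximate formula of \cite{Maf18} leading to \eqref{vargen1}; here the novelty is to track the 4th order terms explicitly. When the surface is static, staticness implies that the 2nd-order contribution collapses, after integration over $\Sigma^2$, to a quantity of size $m\cdot o(1/N^2)$ (this is where Definition \ref{def_static} is used in conjunction with the equidistribution estimate \eqref{equidi} and the well-separated sequence hypothesis, which upgrades $o(1)$-type estimates on $\mathfrak{S}_2,\mathfrak{S}_4$ into the required bounds). The 4th-order contribution is a linear combination of the nine trace monomials in $r,X,X',Y,Y'$ appearing in the statement, with coefficients arising from the scalar Taylor expansion of $\mathbb{E}[|V_\sigma||V_{\sigma'}|\mid\cdot]$ together with the $(1-r^2)^{-1/2}$ density factor; this accounts for terms such as $\frac{3}{32}r^4$, $\frac{1}{32}\tr(Y'Y)$, and the cross-terms $r^2\tr(X)$, and for the combinatorial block in square brackets weighted by $1/2^{11}$.

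The error control is the final step: the total-order-$\geq 5$ remainder in the Taylor expansion is bounded, after Cauchy--Schwarz and the standard second-moment estimates for derivatives of $r$ in \cite[\S 5]{Maf18}, by a constant (depending on $\Sigma$) times an integral which reduces, via the Fourier expansion of $r$, to $\mathfrak{S}_6$; this produces the stated $m\cdot O_\Sigma(\mathfrak{S}_6)$ error. The main obstacle is the combinatorial bookkeeping of the 4th-order terms: unlike in the two-dimensional case, the components of $\nabla_\Sigma F(\sigma)$ are not independent random variables (see \S \ref{sec_chaos}), so one must either diagonalize in a local orthonormal frame of the tangent plane at each $\sigma$ or, preferably, keep everything in trace notation and invoke the invariance of traces under cyclic permutations to match coefficients. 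Verifying that the subleading contributions from the $(1-r^2)^{-1/2}$ expansion combine correctly with those from the conditional expectation to give precisely the $1/2^{11}$ coefficient is the most delicate combinatorial check.
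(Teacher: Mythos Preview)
Your overall architecture---Kac--Rice for the second moment, Gaussian regression to write the conditional law of the tangential gradients as a perturbation $I_4+\begin{psmallmatrix}X&Y\\Y'&X'\end{psmallmatrix}$, then a fourth-order Taylor expansion of the expected product of norms---is exactly what the paper does, via Lemma~\ref{lemma5.1} and equation~\eqref{prop4.5}. However, two points in your write-up misidentify where the hypotheses enter, and one technical step is missing.

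First, staticity plays \emph{no role} in the proof of Proposition~\ref{KR} itself. The second-order terms $\tfrac18 r^2+\tfrac1{16}\tr(X)+\tfrac1{16}\tr(X')+\tfrac1{32}\tr(Y'Y)$ are \emph{present} in the stated formula; they do not collapse here. Their cancellation (via $\mathcal H=A^2/9$) happens only later, in Proposition~\ref{arith}, after the arithmetic estimates of Lemma~\ref{5.1} are inserted. Likewise, the well-separated hypothesis is \emph{not} assumed in Proposition~\ref{KR}: the quantity $\mathfrak S_6$ appears as an explicit error term, not as something already known to be $o(1/N^2)$. Your paragraph invoking Definition~\ref{def_static}, \eqref{equidi}, and the $\mathfrak S_2,\mathfrak S_4$ bounds conflates this proposition with the downstream results.

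Second, you omit the singular-set step. The Taylor expansion of $K_{2;\Sigma}$ is valid only where $|r|$ is bounded away from $1$; on the complementary singular set $S$ (Definition~3.10 of \cite{Maf18}) one cannot expand. The paper separates $\iint_{\Sigma^2}=\iint_{\Sigma^2\setminus S}+\iint_S$, applies \eqref{prop4.5} on the first piece, and bounds the second via $\operatorname{meas}(S)\ll\mathcal R_6(m)$ together with the uniform estimate of \cite[Lemma~3.12]{Maf18}. This, together with Lemma~\ref{err} controlling the Taylor remainder $\mathfrak E$ by $o(1/N^2)+O(\mathfrak S_6)$ (the $o(1/N^2)$ coming from the six-correlation bound \eqref{sixcorr}, the $O(\mathfrak S_6)$ from the off-diagonal oscillatory integrals), is what produces the stated error $m\cdot o(1/N^2)+m\cdot O_\Sigma(\mathfrak S_6)$. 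Your account of the error bounds only the Taylor-remainder piece and does not explain the $o(1/N^2)$ contribution.
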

Proposition \ref{KR} will be proven in Section \ref{sec_KR}.

\subsection{The arithmetic formula}\label{sec_arithmetic}

After the application of the Kac-Rice approximate formula, we arrive at the arithmetic part of our argument. Here the results of \cite{benmaf} come into play. Recall the definition of $\ell$-correlations $\mathcal C_m(\ell)$ in (\ref{l-corr}). 
The {\bf non-degenerate} $\ell$-correlations are those such that no proper subsum vanishes,
\begin{equation}\label{defxl}
\mathcal X(\ell) = \mathcal{X}_m(\ell):=\Big\{(\mu^{(1)},\dots,\mu^{(\ell)})\in\mathcal{C}_m(\ell) : \forall I\subsetneq\{1,\dots,\ell\}, \sum_{i\in I}\mu^{(i)}\neq 0\Big\}.
\end{equation}

Now for $\ell$ even, $\ell$-correlations are related to the $\ell$-th moment of the covariance function \eqref{cov},
\begin{equation}
\label{rk}
\mathcal{R}(\ell)=\mathcal{R}_m(\ell):=\int_{\mathbb{T}^d}|r^\ell(x)|dx=\frac{|\mathcal{C}_m(\ell)|}{N^\ell}.
\end{equation}

For the purposes of this paper, one requires non-trivial upper bounds for the cardinalities $|\mathcal{X}(4)|$ and $|\mathcal{C}(6)|$. A geometric argument yields readily \cite[(4.8)]{Maf18}
\begin{equation}
\label{fourcorrbis}
|\mathcal{X}_m(4)|\leq|\mathcal{C}_m(4)|=O(N^{2+o(1)})
\end{equation}
and similarly
\begin{equation*}
|\mathcal{C}_m(6)|=O(N^{4+o(1)}).
\end{equation*}
To compare with lower bounds, the complement of $\mathcal{X}(4)$ is the set of the symmetric $4$-spectral correlations $\mathcal{D}'(4)$, of order
\begin{equation*}
|\mathcal{D}'(4)|\gg N^{2}.
\end{equation*}
Note that these are the correlations that cancel out in pairs.
\\
Similarly,
\begin{equation*}
|\mathcal{C}(6)|\gg N^{3}.
\end{equation*}

In \cite{Maf18}, the upper bound \eqref{fourcorrbis} on $|\mathcal{C}_m(4)|$ was sufficient. This work is markedly different in this aspect, as we actually need
\begin{equation*}
|\mathcal{X}_m(4)|=o(N^{2}) \qquad\text{ and }\qquad
|\mathcal{C}_m(6)|=o(N^{4}).
\end{equation*}
In \cite[Theorems 1.6 and 1.7]{benmaf}, it was proven that, as $m\to\infty$,
\begin{equation}
\label{fourcorr}
|\mathcal{X}_m(4)|=O(N^{7/4+o(1)})
\end{equation}
and
\begin{equation}\label{sixcorr}
|\mathcal{C}_m(6)|=O(N^{11/3+o(1)}).
\end{equation}
These will be applied in Section \ref{secarith}, to prove the following.

\begin{proposition}[The arithmetic formula]
\label{arith}
For a static surface $\Sigma$, we have as $m\to\infty$, $m\not\equiv 0,4,7 \pmod 8$,
\begin{equation}
\label{var}
\text{\rm Var}(\mathcal{L})=\frac{\pi^2}{9600}\frac{m}{N^2}\left[81 \mathcal{I}_{4}+35 A^2+o(1)\right]+m\cdot O(\mathfrak{S}_2+\mathfrak{S}_4+\mathfrak{S}_6).
\end{equation}
\end{proposition}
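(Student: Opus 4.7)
The plan is to take the expression for $\Var(\mathcal{L})$ furnished by Proposition \ref{KR} and reduce each integrand to an arithmetic sum over lattice point tuples by Fourier expansion. Recall that $r(x)=\frac{1}{N}\sum_\mu e^{i2\pi\langle\mu,x\rangle}$, and each entry of $X(\sigma-\sigma')$, $X'(\sigma-\sigma')$, $Y(\sigma-\sigma')$, $Y'(\sigma-\sigma')$ is a first- or second-order derivative of $r$, hence a single lattice-point sum with polynomial symbol in the components of $\mu/\sqrt{m}$ and coefficients depending on $n(\sigma),n(\sigma')$. Thus every summand in the Kac-Rice integrand, after integrating over $\Sigma^2$, becomes (up to explicit numerical constants and powers of $m$) a sum of the form
\begin{equation*}
\frac{m^a}{N^\ell}\sum_{(\mu^{(1)},\dots,\mu^{(\ell)})\in\mathcal E^\ell} P\!\left(\frac{\mu^{(1)}}{\sqrt m},\dots,\frac{\mu^{(\ell)}}{\sqrt m};n(\sigma),n(\sigma')\right)\bigg|\int_{\Sigma} e^{i2\pi\langle\mu^{(1)}+\cdots+\mu^{(\ell)},\sigma\rangle}\,d\sigma\bigg|^2,
\end{equation*}
for $\ell\in\{2,4\}$ (the linear terms in $r^2,\tr X,\tr X',\tr Y'Y$ give $\ell=2$ sums, and the quartic terms in $r^4$, $r^2\tr X$, $\tr(Y'YY'Y)$, etc.\ give $\ell=4$ sums) and for appropriate $a$.

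I would split each such sum into its diagonal part (the $\ell$-correlations in $\mathcal C(\ell)$), where the phase integral equals $A^2$, and its off-diagonal part (over $\mathcal E^\ell\setminus\mathcal C(\ell)$). For the off-diagonal contribution, exploit the nowhere-vanishing Gauss–Kronecker curvature of $\Sigma$ and the standard stationary phase/van~der~Corput bound to obtain $|\int_\Sigma e^{i2\pi\langle w,\sigma\rangle}d\sigma|^2=O_\Sigma(1/|w|^2)$; summing this yields the error $m\cdot O(\mathfrak{S}_2+\mathfrak{S}_4)$, which accounts for the remaining error budget beyond the $m\cdot O_\Sigma(\mathfrak{S}_6)$ already present in Proposition \ref{KR}. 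For the diagonal part in the $\ell=4$ case, the non-degenerate contribution from $\mathcal X(4)$ is negligible thanks to $|\mathcal X_m(4)|=O(N^{7/4+o(1)})=o(N^2)$ from \eqref{fourcorr}, so only the degenerate $4$-correlations $\mathcal D'(4)$ — tuples that split into pairs $(\mu,-\mu)(\mu',-\mu')$ — matter. These factorize into products of $2$-sums, and after the reduction become averages of $\langle\mu/|\mu|,\cdot\rangle$-polynomials over $\nu_m$, which by the equidistribution \eqref{equidi} converge to integrals against the uniform measure on $\mathcal S^2$. The relevant spherical moments
\begin{equation*}
\int_{\mathcal S^2}\theta_i\theta_j\frac{d\theta}{4\pi}=\tfrac13\delta_{ij},\qquad \int_{\mathcal S^2}\theta_i\theta_j\theta_k\theta_l\frac{d\theta}{4\pi}=\tfrac1{15}(\delta_{ij}\delta_{kl}+\delta_{ik}\delta_{jl}+\delta_{il}\delta_{jk})
\end{equation*}
are elementary, and their insertion produces expressions in $A^2$ and $\mathcal I_k$ (via $\int_\Sigma n_i(\sigma)n_j(\sigma)\cdots$), precisely the quantities in the statement.

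The technical heart of the argument, and where I expect the main obstacle, is the algebraic bookkeeping of the resulting terms. For a static surface each contribution at the $\ell=2$ level, as well as many of the leading $\ell=4$ contributions, must cancel — otherwise one would be left with a main term of order $m/N$ or the wrong order within $m/N^2$. Concretely, one has to verify that the combination
\begin{equation*}
\tfrac18 r^2+\tfrac1{16}\tr X+\tfrac1{16}\tr X'+\tfrac1{32}\tr(Y'Y)
\end{equation*}
contributes only an $o(m/N^2)$ main term when integrated against the diagonal $2$-correlations (using $\mathcal H_\Sigma(\eta)=A^2/9$ and $\mathcal I_{2,\Sigma}=A^2/3$), and that among the $\ell=4$ terms the correct pairings of $(n_i,n_j,n_k,n_l)$ moments via the symmetrizer $\delta_{ij}\delta_{kl}+\delta_{ik}\delta_{jl}+\delta_{il}\delta_{jk}$ collapse to exactly $\frac{\pi^2}{9600}(81\mathcal I_4+35A^2)$. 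I would organize this by treating the $r^2\cdot(\cdot)$ and $\tr(X)\tr(X')$-type contributions first (products of two $2$-sums, reducing to $A^2$- and $\mathcal I_2$-integrals) and then the genuinely fourth-order $\tr(Y'YY'Y)$ and $r^4$ terms (which generate the $\mathcal I_4$-integral), collecting the arithmetic output in a single table before summing. The positivity $81\mathcal I_4+35A^2>0$ and the consistency with \eqref{eq:varh} at the $m/N$ scale serve as useful sanity checks throughout.
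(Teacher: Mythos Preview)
Your proposal is correct and follows essentially the same approach as the paper: the paper packages the computation you describe into Lemma \ref{5.1}, which records the precise asymptotics (with errors $o(1/N^2)+O(\mathfrak{S}_2+\mathfrak{S}_4+\mathfrak{S}_6)$) of each of the integrals $\iint r^2$, $\iint r^4$, $\iint\tr X$, $\iint\tr(Y'Y)$, $\iint\tr(Y'YY'Y)$, etc., obtained exactly by the Fourier expansion, diagonal/off-diagonal split, stationary-phase bound, and $|\mathcal X(4)|=o(N^2)$ input you outline, and then substitutes these into the Kac--Rice formula of Proposition \ref{KR} together with $\mathcal H=A^2/9$. Your organization by correlation length $\ell$ rather than by integral term is an equivalent bookkeeping of the same arithmetic.
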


\begin{proof}[Proof of Theorem \ref{varthm}]
Following the arithmetic formula, the variance asymptotic is now established for well separated sequences.
\end{proof}


\subsection{Chaos expansion}\label{sec_chaos}

In this part we compute the chaotic expansion (\ref{wi}) for the nodal intersection length $\mathcal L$, see e.g. \cite[\S 2]{nopebook} for details on Wiener chaos. 
Recall the formal representation (\ref{formal}), and define for $\varepsilon >0$ the $\varepsilon$-approximating nodal intersection length as
\begin{equation}\label{approx}
    \mathcal L^\varepsilon = \mathcal L_m^\varepsilon := \int_{\Sigma} \delta_0^\varepsilon(F(\sigma)) | \nabla_{\Sigma} F(\sigma)|\, d\sigma,
\end{equation}
where 
\begin{equation*}
    \delta_0^\varepsilon := \frac{1}{2\varepsilon} \mathbf{1}_{[-\varepsilon, \varepsilon]}
\end{equation*}
is an approximation of $\delta_0$ at level $\varepsilon$. From \cite[Proposition 2.3]{Maf18} and the (uniform over $\varepsilon$) upper bound
\begin{equation*}
    \mathcal L^\varepsilon \le 18 \sqrt{m} 
\end{equation*}
showed in \cite[Lemma 2.5]{Maf18} we immediately have the following (thus justifying (\ref{formal})). 
\begin{lemma}\label{lem1}
For $m\in S$ we have 
\begin{equation*}
   \lim_{\varepsilon \to 0} \mathcal L^\varepsilon =\mathcal L,
\end{equation*}
both a.s. and in $L^2(\mathbb P)$.
\end{lemma}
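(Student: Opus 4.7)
The statement really packages two separate convergences, and both are essentially handed to us by the inputs cited immediately above the lemma, so the plan is to assemble them rather than to re-derive them. First I would record that \cite[Proposition 2.3]{Maf18} provides the a.s.\ pointwise convergence $\mathcal{L}^\varepsilon \to \mathcal{L}$ as $\varepsilon\to 0$. The content there is of Bulinskaya/coarea type: on the full-probability event on which $0$ is a regular value of the restriction $F|_\Sigma$ (i.e.\ $F(\sigma)=0$ on $\Sigma$ implies $\nabla_\Sigma F(\sigma)\neq 0$), the intersection $F^{-1}(0)\cap \Sigma$ is a finite union of smooth curves, and the co-area formula applied to the mollifier $\delta_0^\varepsilon$ yields
\[
\mathcal{L}^\varepsilon = \frac{1}{2\varepsilon}\int_{\{|F|\le \varepsilon\}\cap\Sigma} |\nabla_\Sigma F(\sigma)|\,d\sigma \; \xrightarrow[\varepsilon\to 0]{} \; h_1(F^{-1}(0)\cap\Sigma) = \mathcal{L}.
\]
One must check that the exceptional event (where $F$ admits a critical zero on $\Sigma$) has probability zero; this is a standard consequence of the non-degeneracy of the Gaussian distribution of $(F(\sigma),\nabla_\Sigma F(\sigma))$ together with the regularity assumptions on $\Sigma$ in Definition~\ref{def_Sigma}. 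I would simply cite \cite[Proposition 2.3]{Maf18} for this step.

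Next I would upgrade a.s.\ convergence to $L^2(\mathbb P)$ convergence by dominated convergence. Here the crucial input is the deterministic, $\varepsilon$-uniform a.s.\ bound $\mathcal{L}^\varepsilon \le 18\sqrt{m}$ from \cite[Lemma 2.5]{Maf18}; consequently $(\mathcal{L}^\varepsilon)^2$ is dominated by the integrable (in fact deterministic) constant $(18)^2 m$, and by Lebesgue's dominated convergence theorem
\[
\lim_{\varepsilon\to 0}\mathbb E\bigl[(\mathcal{L}^\varepsilon-\mathcal{L})^2\bigr]=0,
\]
which is exactly the claimed $L^2$ convergence. The limiting random variable $\mathcal{L}$ inherits the same a.s.\ bound, so it lies in $L^2(\mathbb P)$.

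There is not really a hard step here: once the bound of \cite[Lemma 2.5]{Maf18} is accepted, the $L^2$ statement is immediate from a.s.\ convergence plus dominated convergence, and the a.s.\ statement is essentially a measure-theoretic translation of the transversality of $F|_\Sigma$ which is built into Definition~\ref{def_Sigma}. The only place one must be slightly careful is in verifying that the full-probability event on which $\mathcal{L}^\varepsilon(\omega)\to\mathcal{L}(\omega)$ does not depend on $\varepsilon$; this is handled by taking a countable $\varepsilon_k\downarrow 0$ and using monotonicity/continuity of the mollification inside the deterministic co-area argument, which is again covered by \cite[Proposition 2.3]{Maf18}.
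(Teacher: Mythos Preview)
Your proposal is correct and follows essentially the same approach as the paper: almost sure convergence is taken from \cite[Proposition 2.3]{Maf18}, and the uniform bound $\mathcal L^\varepsilon \le 18\sqrt m$ from \cite[Lemma 2.5]{Maf18} is combined with dominated convergence to upgrade to $L^2(\mathbb P)$. Your additional remarks on the co-area/Bulinskaya mechanism and on handling a countable sequence $\varepsilon_k\downarrow 0$ are sound elaborations but go beyond what the paper records, which simply states that the lemma is immediate from the two cited inputs.
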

A straightforward differentiation of $F$ in Definition \ref{def_arw} leads to 
\begin{equation*}
    \nabla F(x) = \frac{i2\pi}{\sqrt{N}} \sum_{\mu\in \mathcal E} \mu\, a_\mu e^{i2\pi\langle \mu, x\rangle},\qquad x\in \mathbb T^3.
\end{equation*}
It is easy to check that for each $x\in \mathbb T^3$, the random variables $F(x)$ and $\nabla F(x)$ are (Gaussian) independent, and, for each $\sigma\in \Sigma$, the same holds true for $F(\sigma)$ and $\nabla_{\Sigma} F(\sigma)$ (see (\ref{surface_gradient})). The latter property is very useful since it simplifies computations a lot. However the components of $\nabla_{\Sigma} F(\sigma)$
are \emph{not} independent random variables for $\sigma\in \Sigma$ (on the contrary the components of $\nabla F(x)$ are independent for each $x\in \mathbb T^3$). In order to overcome this additional difficulty, in what follows we will first write the (random) quantity  $| \nabla_{\Sigma} F(\sigma)|$ in terms of the Euclidean norm of a standard Gaussian vector thus reducing the question to a two-dimensional problem, and then derive the chaotic expansion of $\mathcal L^\varepsilon$ taking advantage of \cite[Proposition 3.2]{MPRW}. Letting $\varepsilon$ tend to zero, thanks to Lemma \ref{lem1}, we will find the chaotic expansion of $\mathcal L$. 

\subsubsection{Covariance matrices}

Let us first introduce some more notation. We denote by $\partial^\Sigma_i$ the components of the surface gradient, i.e. 
$$
\nabla_\Sigma F = (\partial_1^\Sigma F, \partial_2^\Sigma  F, \partial_3^\Sigma F)^t,
$$
where, from \paref{surface_gradient}, for $i=1,2,3$
$$
\partial_i^\Sigma F = \partial_i F - n_i \sum_{j=1}^3 n_j \partial_j F,
$$
with the convention $\partial_i := \partial / \partial x_i$ (recall that $n(\sigma)$ is the unit normal vector to $\Sigma$ at the point $\sigma$). 
Since for every $\sigma\in \Sigma$ it holds that 
$
\langle \nabla_\Sigma F(\sigma), n(\sigma) \rangle = 0,
$
assuming w.l.o.g. that $n_3$ is non-zero, we have
$$
\partial_3^\Sigma F = -\frac{n_1}{n_3} \partial_1^\Sigma F -\frac{n_2}{n_3} \partial_2^\Sigma F. 
$$
We can write (see also the proof of Lemma 3.5 in \cite{Maf18}) 
\begin{equation}
| \nabla_{\Sigma} F|^2 = \left (\partial_1^\Sigma F, \partial_2^\Sigma  F \right ) \overline{\Omega}^{-1} \left (\partial_1^\Sigma F, \partial_2^\Sigma  F \right )^t,
\end{equation}
where 
$$
\overline{\Omega}:= \begin{pmatrix} n_2^2 + n_3^2 &-n_1 n_2\\
-n_1n_2 &n_1^2 + n_3^2
\end{pmatrix}.
$$
Note that $\overline{\Omega}$ is the covariance matrix of the rescaled vector $\left (\widetilde \partial_1^\Sigma F, \widetilde \partial_2^\Sigma  F \right )$, where 
$$
\widetilde \partial_i^\Sigma F := \frac{1}{\sqrt M} \partial_i^\Sigma F;
$$
here $M:= 4\pi^2m/3$ is the variance of $\partial_i F(x)$.
It is immediate to check that $\det(\overline{\Omega}) = n_3^2$ and the eigenvalues are $1$ and $n_3^2$. Note that if $n_3^2 = 1$, then $\overline{\Omega} = \text{\rm Id}$ the identity matrix, and the vector $\left (\widetilde \partial_1^\Sigma F, \widetilde \partial_2^\Sigma  F \right )$ is a bivariate standard Gaussian. 
Let us hence assume $n_3^2 \ne 1$. We are going to diagonalize $\overline{\Omega}$ through orthogonal matrices. Let us consider the orthogonal matrix 
$$
O = \frac{1}{\sqrt{n_1^2 + n_2^2}}\begin{pmatrix}
n_1 &-n_2\\
n_2 &n_1
\end{pmatrix}
$$
whose columns form an orthonormal basis of eigenvectors for $\overline{\Omega}$. It holds 
$$
\overline{\Omega} = O^t \Delta O,\qquad \qquad 
\Delta = \begin{pmatrix}
n_3^2 &0\\
0 &1
\end{pmatrix}. 
$$
Let us set
$$
\overline{\Omega}^{1/2} := O^t \Delta^{1/2} O,\qquad \qquad 
\Delta^{1/2} = \begin{pmatrix}
n_3 &0\\
0 &1
\end{pmatrix}. 
$$
The matrix $\overline{\Omega}^{1/2}$ is a symmetric square root of $\overline{\Omega}$. Let us consider now the standard Gaussian vector $(Z_1, Z_2)$ such that
\begin{equation}\label{defZ}
\left (\widetilde \partial_1^\Sigma F, \widetilde \partial_2^\Sigma  F \right )^t = \overline{\Omega}^{1/2} (Z_1, Z_2)^t 
\end{equation}
then 
\begin{equation}\label{Z_F}
 | \nabla_{\Sigma} F|^2 = \left (\partial_1^\Sigma F, \partial_2^\Sigma  F \right ) \overline{\Omega}^{-1} \left (\partial_1^\Sigma F, \partial_2^\Sigma  F \right )^t = M | (Z_1, Z_2)|^2. 
\end{equation}
For each $\sigma\in\Sigma$ we just wrote $| \nabla_{\Sigma} F(\sigma)|$ as the norm (up to a factor) of a bivariate \emph{standard} Gaussian vector $(Z_1(\sigma), Z_2(\sigma))$. 

\subsubsection{Chaotic components}

Recall that 
\begin{equation*}
    M = \frac{4\pi^2m}{3}.
\end{equation*}
Substituting (\ref{Z_F}) into (\ref{approx}) we can write for every 
$\varepsilon>0$
\begin{equation}\label{newapprox}
\mathcal L^\varepsilon = \sqrt M \int_{\Sigma} \delta^\varepsilon_0(F(\sigma)) | (Z_1(\sigma), Z_2(\sigma))|\, d\sigma;
\end{equation}
in particular $F(\sigma), Z_1(\sigma), Z_2(\sigma)$ are \emph{independent} random variables for each $\sigma\in \Sigma$. 
In view of (\ref{newapprox}) we can apply previous results in \cite[\S 3.2.2]{MPRW} to derive the chaotic expansion for $\mathcal L^\varepsilon$, and then immediately establish the following result bearing in mind Lemma \ref{lem1} while letting $\varepsilon$ tend to zero. Let us first introduce some more notation: for $k,n,l$ non-negative integers 
\begin{equation}\label{beta}
    \beta_{2k} := \frac{1}{\sqrt{2\pi}} H_{2k}(0),
\end{equation}
$H_{2k}$ denoting the $2k$-th Hermite polynomial (\ref{defH}) and
\begin{equation}\label{alpha}
    \alpha_{2n,2l} := \sqrt{\frac{\pi}{2}} \frac{(2n)!(2l)!}{n! l!} \frac{1}{2^{n+l}} p_{n+l}\left ( \frac{1}{4} \right ),
\end{equation}
where for $i\in \mathbb N$ and $t\in \mathbb R$
\begin{equation*}
    p_{i}(t) =  \sum_{j=0}^i (-1)^j (-1)^i { i \choose j} \frac{(2j+1)!}{(j!)^2} t^j.
\end{equation*}
\begin{lemma}\label{lem2} For $m\in S$ we have 
\begin{equation}\label{chaos_exp}
\mathcal L = \sum_{q=0}^{+\infty} \mathcal L[2q],
\end{equation}
where the convergence of the series is in $L^2(\mathbb P)$, and for $q\ge 0$
\begin{equation}\label{chaos_comp}
\mathcal L[2q] = \sqrt M \sum_{k+n+l=q} \frac{\beta_{2k} \alpha_{2n, 2l}}{(2k)! (2n)! (2l)!}\int_{\Sigma} H_{2k}(F(\sigma)) H_{2n}(Z_1(\sigma))H_{2l}(Z_2(\sigma))\, d\sigma,
\end{equation}
the vector $Z$ being defined as in (\ref{defZ}).
\end{lemma}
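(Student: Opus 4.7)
\textbf{Proof proposal for Lemma \ref{lem2}.} The plan is to compute the chaotic decomposition for the $\varepsilon$-approximation $\mathcal L^\varepsilon$ in \paref{newapprox} and then pass to the limit $\varepsilon\to 0$ via Lemma \ref{lem1}. The key structural input is that, for every fixed $\sigma\in\Sigma$, the one-dimensional standard Gaussian $F(\sigma)$ is independent of the bivariate standard Gaussian $(Z_1(\sigma),Z_2(\sigma))$ constructed in \paref{defZ}, so that the integrand $\delta_0^\varepsilon(F(\sigma))|(Z_1(\sigma),Z_2(\sigma))|$ factorizes as a product of functions of independent Gaussian vectors.

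First I would expand each factor separately in the relevant Hermite basis. The approximating Dirac $\delta_0^\varepsilon$ is even, hence admits the expansion $\delta_0^\varepsilon(t)=\sum_{k\ge 0}\frac{\beta_{2k}^\varepsilon}{(2k)!}H_{2k}(t)$ with $\beta_{2k}^\varepsilon:=\mathbb E[\delta_0^\varepsilon(Z)H_{2k}(Z)]$, $Z\sim \mathcal N(0,1)$; a direct computation from the definition $\delta_0^\varepsilon=\frac{1}{2\varepsilon}\mathbf{1}_{[-\varepsilon,\varepsilon]}$ and dominated convergence give $\beta_{2k}^\varepsilon\to \frac{1}{\sqrt{2\pi}}H_{2k}(0)=\beta_{2k}$ as $\varepsilon\to 0$, which is precisely \paref{beta}. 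In parallel, the Euclidean norm $(z_1,z_2)\mapsto|(z_1,z_2)|$ on $\R^2$ is even in each coordinate, so only even tensorised Hermite polynomials appear in its expansion $|(z_1,z_2)|=\sum_{n,l\ge 0}\frac{\alpha_{2n,2l}}{(2n)!(2l)!}H_{2n}(z_1)H_{2l}(z_2)$; the explicit values of $\alpha_{2n,2l}$ are obtained in polar coordinates and are the ones recalled in \paref{alpha}, following exactly \cite[Proposition 3.2]{MPRW}.

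Plugging the two expansions into \paref{newapprox}, using the pointwise independence of $F(\sigma)$ and $(Z_1(\sigma),Z_2(\sigma))$ to multiply the series and grouping terms by total order $2q=2k+2n+2l$, I would identify the orthogonal projection of $\mathcal L^\varepsilon$ onto the $2q$-th Wiener chaos as
\[
\mathcal L^\varepsilon[2q] \;=\; \sqrt M \sum_{k+n+l=q}\frac{\beta_{2k}^\varepsilon\,\alpha_{2n,2l}}{(2k)!\,(2n)!\,(2l)!}\int_\Sigma H_{2k}(F(\sigma))H_{2n}(Z_1(\sigma))H_{2l}(Z_2(\sigma))\,d\sigma,
\]
the convergence $\mathcal L^\varepsilon=\sum_{q\ge 0}\mathcal L^\varepsilon[2q]$ in $L^2(\mathbb P)$ being just the Wiener--Itô decomposition. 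To conclude, I would use that orthogonal projections onto a fixed chaos are continuous on $L^2(\mathbb P)$: combined with Lemma \ref{lem1}, this forces $\mathcal L^\varepsilon[2q]\to \mathcal L[2q]$ in $L^2(\mathbb P)$ for every $q\ge 0$, and together with the coefficient convergence $\beta_{2k}^\varepsilon\to \beta_{2k}$ it identifies $\mathcal L[2q]$ with \paref{chaos_comp}. The $L^2$-convergence of the series \paref{chaos_exp} then follows from $\mathcal L\in L^2(\mathbb P)$ (uniform bound $\mathcal L^\varepsilon\le 18\sqrt m$) and orthogonality of distinct chaoses.

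The main delicate point is the passage to the limit $\varepsilon\to 0$ \emph{within} each projection $\mathcal L^\varepsilon[2q]$: although each Hermite coefficient $\beta_{2k}^\varepsilon$ is a bounded scalar converging to $\beta_{2k}$, one must rule out any mass escaping to infinitely many $q$'s in the process. This is handled by the $L^2$-continuity of the chaotic projector (which makes the argument purely qualitative and avoids estimating the integrals term by term), and by the already recalled uniform-in-$\varepsilon$ bound from \cite[Lemma 2.5]{Maf18}, so no additional hard analysis beyond the MPRW template is required.
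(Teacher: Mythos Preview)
Your proposal is correct and follows essentially the same approach as the paper: the paper simply says that in view of \eqref{newapprox} one applies \cite[\S 3.2.2]{MPRW} to obtain the chaotic expansion of $\mathcal L^\varepsilon$ and then lets $\varepsilon\to 0$ via Lemma~\ref{lem1}, and you have just spelled out those two steps (the Hermite expansions of $\delta_0^\varepsilon$ and of the Euclidean norm, the independence of $F(\sigma)$ and $(Z_1(\sigma),Z_2(\sigma))$, and the $L^2$-continuity of the chaotic projections) in somewhat more detail.
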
 
\begin{remark}\label{remchaos}\rm
We will show that $\mathcal L[2]$ (resp. $\mathcal L[4]$) can be reduced to the integral over the surface of a bivariate polynomial of degree two (resp. of degree four) evaluated at $F$ and $|(Z_1, Z_2)|^2$ (hence at $F$ and $| \nabla_\Sigma F|^2$ thanks to (\ref{Z_F})), see the proof of Lemma \ref{lem_2} (resp. the proof of Lemma \ref{4dec}). This simplifies computations a lot and is of some independent interest. We do believe it holds true for \emph{each} chaotic component, namely for every $q$ the random variable $\mathcal L[2q]$ can be written as the integral over $\Sigma$ of a bivariate polynomial of degree $2q$ evaluated at $F$ and $| \nabla_\Sigma F|^2$. We leave it as a topic for future research. 
\end{remark}

\section{Proofs of the main results}\label{sec_proofs_main}

Let us bear in mind the chaotic expansion for $\mathcal L$ in Lemma \ref{lem2}. 
The projection onto the zeroth Wiener chaos, i.e., onto $\mathbb R$ is the mean of the random variable, cf. (\ref{mean}).
\begin{lemma}
For every $m\in S$,
$$
\mathcal L[0] =  \pi \sqrt{\frac{m}{3}} \cdot A.
$$
\end{lemma}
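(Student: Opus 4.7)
The statement is a direct computation using the chaotic decomposition in Lemma \ref{lem2}, since by definition the projection onto the zeroth Wiener chaos ($\mathbb{R}$) is just the expectation of $\mathcal L$. The plan is therefore to evaluate formula (\ref{chaos_comp}) at $q=0$ and recover the mean formula (\ref{mean}).

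First, I would note that the index set $\{k+n+l=0, \, k,n,l\ge 0\}$ consists of the single triple $(0,0,0)$, so the only contribution to $\mathcal L[0]$ comes from
\begin{equation*}
\mathcal L[0] = \sqrt{M}\,\frac{\beta_0 \,\alpha_{0,0}}{0!\,0!\,0!}\int_{\Sigma} H_0(F(\sigma))\,H_0(Z_1(\sigma))\,H_0(Z_2(\sigma))\,d\sigma.
\end{equation*}

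Next, I would evaluate the constants. Since $H_0 \equiv 1$, definition (\ref{beta}) gives $\beta_0 = \frac{1}{\sqrt{2\pi}}$. For $\alpha_{0,0}$, definition (\ref{alpha}) yields $\alpha_{0,0} = \sqrt{\pi/2}\cdot p_0(1/4)$, and $p_0(t)\equiv 1$ by its defining sum, so $\alpha_{0,0} = \sqrt{\pi/2}$. The integrand collapses to the constant $1$, and the integral is simply $|\Sigma|=A$. Combining,
\begin{equation*}
\mathcal L[0] = \sqrt{M}\cdot \frac{1}{\sqrt{2\pi}}\cdot \sqrt{\frac{\pi}{2}}\cdot A = \frac{\sqrt{M}}{2}\, A.
\end{equation*}

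Finally I would substitute $M = 4\pi^2 m/3$ to obtain
\begin{equation*}
\mathcal L[0] = \frac{1}{2}\cdot \frac{2\pi\sqrt{m}}{\sqrt{3}}\cdot A = \pi\sqrt{\frac{m}{3}}\cdot A,
\end{equation*}
which matches (\ref{mean}) as expected, confirming consistency of the chaotic expansion with the known first-moment computation. There is no real obstacle here: the only minor subtlety is just bookkeeping of the numerical constants $\beta_0$ and $\alpha_{0,0}$; alternatively, one may simply invoke $\mathcal L[0]=\mathbb E[\mathcal L]$ together with (\ref{mean}).
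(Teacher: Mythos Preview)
Your proof is correct and follows essentially the same approach as the paper: evaluate formula (\ref{chaos_comp}) at $q=0$, plug in $\beta_0=1/\sqrt{2\pi}$, $\alpha_{0,0}=\sqrt{\pi/2}$, $H_0\equiv 1$, and $M=4\pi^2 m/3$. The paper's proof is just a one-line version of your computation.
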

\begin{proof} From (\ref{chaos_comp}), (\ref{beta}) and (\ref{alpha})
$$
\mathcal L[0]=\sqrt M \beta_{0} \alpha_{0,0} | \Sigma| = \sqrt{\frac{4\pi^2 m}{3}} \frac{1}{\sqrt{2\pi}} \sqrt{\frac{\pi}{2}} | \Sigma| = \pi \sqrt{\frac{m}{3}} \cdot A,
$$
thus concluding the proof. 
\end{proof}

\subsection{Proof of Theorem \ref{th1}}

We first need to study the variance of the second chaotic component $\mathcal L[2]$ in (\ref{chaos_exp}). It turns out that it is asymptotic to the variance of the nodal intersection length $\mathcal L$ (Proposition \ref{prop_var2}), hence the orthogonality of Wiener chaoses and a Central Limit Theorem for $\mathcal L[2]$ (Proposition \ref{prop_law2}) allow to deduce the same for $\mathcal L$ thus establishing Theorem \ref{th1}. 
\begin{proposition}\label{prop_var2}
Let $\Sigma\subset \mathbb T^3$ be a surface as in Definition \ref{def_Sigma}, with nowhere zero Gauss-Kronecker curvature. Assume moreover that $3\mathcal I_{2, \Sigma} > A^2$. Then, as $m\to +\infty$ s.t. $m\not\equiv 0,4,7 \pmod 8$,
$$
\text{\rm Var}(\mathcal L[2]) \sim \text{\rm Var}(\mathcal L).
$$
\end{proposition}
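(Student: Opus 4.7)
By the orthogonality of the Wiener chaoses, $\text{Var}(\mathcal L) = \sum_{q\ge 1}\text{Var}(\mathcal L[2q])$. Hence the claim is equivalent to
$$\text{Var}(\mathcal L[2]) = \frac{\pi^2}{60}\frac{m}{N}\bigl(3\mathcal I - A^2 + o(1)\bigr),$$
i.e.\ the second chaotic projection already saturates the Maffucci asymptotic \eqref{vargen1}. The plan is to compute $\text{Var}(\mathcal L[2])$ directly from Lemma \ref{lem2} and match leading orders.

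First I would extract the explicit expression for $\mathcal L[2]$ from \eqref{chaos_comp} by running the sum over $k+n+l=1$. This gives three terms corresponding to $(k,n,l)\in\{(1,0,0),(0,1,0),(0,0,1)\}$, producing
$$\mathcal L[2] = \sqrt M\left[\frac{\beta_{2}\alpha_{0,0}}{2}\int_\Sigma H_2(F(\sigma))\,d\sigma + \frac{\beta_0\alpha_{2,0}}{2}\int_\Sigma H_2(Z_1(\sigma))\,d\sigma + \frac{\beta_0\alpha_{0,2}}{2}\int_\Sigma H_2(Z_2(\sigma))\,d\sigma\right].$$
Since $\alpha_{2,0}=\alpha_{0,2}$ by symmetry of the defining polynomial $p_{n+l}$, the last two terms combine into a single contribution proportional to $\int_\Sigma (Z_1^2+Z_2^2-2)\,d\sigma = \int_\Sigma (|\nabla_\Sigma F|^2/M-2)\,d\sigma$ via \eqref{Z_F}. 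This realises Remark \ref{remchaos} at the second chaos: $\mathcal L[2]$ is an integral over $\Sigma$ of a degree-two polynomial in $F$ and $|\nabla_\Sigma F|^2$, and crucially its coefficients are \emph{constant} along $\Sigma$, independent of $n(\sigma)$.

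Next I would compute $\text{Var}(\mathcal L[2])$ by squaring and taking expectation. Using the diagram formula for Hermite polynomials (equivalently, the identities $\mathbb E[H_2(F(\sigma))H_2(F(\sigma'))]=2r(\sigma-\sigma')^2$ and analogous formulae for products of $H_2(F)$ with $|\nabla_\Sigma F|^2$, and for products of $|\nabla_\Sigma F|^2$ with itself), one reduces $\text{Var}(\mathcal L[2])$ to a finite linear combination of double integrals over $\Sigma^2$ of the squared covariance $r^2$, of $r$ times second partial derivatives $\partial_i\partial_j r$, and of products of mixed second derivatives of $r$, all evaluated at $\sigma-\sigma'$. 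The independence of $F(\sigma)$ and $\nabla_\Sigma F(\sigma)$ at the same point eliminates degenerate contributions.

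The bulk of the work is then arithmetic/asymptotic. Using \eqref{cov}, each of these double integrals can be unfolded into sums over $\mathcal E\times \mathcal E$ weighted by Fourier integrals of the form $\int_\Sigma e^{i2\pi\langle\mu+\mu',\sigma\rangle}\,d\sigma$ with additional factors of inner products $\langle\mu,n\rangle$, $\langle\mu',n\rangle$. The diagonal $\mu+\mu'=0$ produces the main term, while the off-diagonal (the set $\mathcal E^2\setminus \mathcal C(2)$) is negligible by the Bourgain--Sarnak--Rudnick bound $\mathfrak S_2 = O(N^{\varepsilon}/N^2)$ recalled in the introduction and by standard stationary-phase estimates on $\Sigma$ (here the nowhere-zero Gauss--Kronecker hypothesis is essential). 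On the diagonal, equidistribution \eqref{equidi} converts averages of powers of $\langle \mu/|\mu|, n(\sigma)\rangle$ into integrals against normalised Lebesgue measure on $\mathcal S^2$, evaluating in terms of $A$ and $\mathcal I$. Collecting coefficients should reproduce exactly the factor $\frac{\pi^2}{60}(3\mathcal I - A^2)$ of \eqref{vargen1}. Under the hypothesis $3\mathcal I > A^2$ the leading term is strictly positive, which is what makes the relation $\text{Var}(\mathcal L[2])\sim \text{Var}(\mathcal L)$ genuine rather than a $0=0$ statement.

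The main obstacle is the bookkeeping in the arithmetic step: after unfolding via \eqref{cov}, the integrand on $\Sigma^2$ is a rational expression in the components of $n(\sigma),n(\sigma')$ coming from the rescaling matrix $\overline\Omega^{1/2}$, and one must verify that the seemingly singular pieces (e.g.\ when $n_3=0$ on a subset of $\Sigma$) cancel or contribute only at the negligible level. A clean way to handle this is to work with the intrinsic representation of $\mathcal L[2]$ in terms of $F$ and $|\nabla_\Sigma F|^2$ noted above, in which the coefficients are manifestly smooth and the angular averages produced by \eqref{equidi} collapse to the invariant $\mathcal I$. Once this identification is done, the conclusion follows immediately and feeds directly into Proposition \ref{prop_law2} to yield Theorem \ref{th1}.
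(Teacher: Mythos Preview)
Your plan is correct and identifies the right ingredients (equidistribution for the diagonal, Bourgain--Sarnak--Rudnick plus stationary phase on $\Sigma$ for the off-diagonal), but the organisation differs from the paper's. You propose to compute $\Var(\mathcal L[2])$ directly as a double integral over $\Sigma^2$ via the diagram formula and then separate diagonal from off-diagonal in the resulting Fourier sums. The paper instead performs the diagonal/off-diagonal split \emph{at the level of the random variable}: expanding $F$ and $Z_1,Z_2$ in the Fourier basis $\lbrace a_\mu\rbrace$ yields a decomposition $\mathcal L[2]=\mathcal L^a[2]+\mathcal L^b[2]$ (Lemma \ref{lem_2}), where $\mathcal L^a[2]$ is a weighted sum $\sum_{\mu\in\mathcal E^+}(|a_\mu|^2-1)c_\mu$ of independent centred terms and $\mathcal L^b[2]$ collects the $\mu\ne\mu'$ contributions. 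One then computes $\Var(\mathcal L^a[2])$ by independence (this is where $9\mathcal H(\nu)-A^2$ appears, converted to $3\mathcal I-A^2$ via \eqref{IH}), and bounds $\Var(\mathcal L^b[2])$ by exactly the off-diagonal estimate you describe.

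Both routes reach the same leading term and use the same analytical input; your $\Sigma^2$-integral approach is closer in spirit to the Kac--Rice computation of $\Var(\mathcal L)$ in \cite{Maf18}. The payoff of the paper's route is that $\mathcal L^a[2]$ is already in Lindeberg form, so the CLT of Proposition \ref{prop_law2} follows in one line from the same decomposition. Your concern about apparent singularities when $n_3=0$ is well placed and your resolution (work with the intrinsic quantity $|\nabla_\Sigma F|^2$) is exactly right; in the paper this is hidden inside the algebra leading to \eqref{nice}, after which all coefficients are polynomial in $n(\sigma)$.
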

Proposition \ref{prop_var2} will be proven in \S \ref{sec_prop2}. Let us now study the asymptotic distribution of the second chaotic component.
\begin{proposition}\label{prop_law2}
Let $\Sigma\subset \mathbb T^3$ be a surface as in Definition \ref{def_Sigma}, with nowhere zero Gauss-Kronecker curvature. Assume moreover that $3\mathcal I_{2, \Sigma} > A^2$. Then, as $m\to +\infty$ s.t. $m\not\equiv 0,4,7 \pmod 8$,
$$
\frac{\mathcal L[2]}{\sqrt{\text{\rm Var}(\mathcal L[2])}} \mathop{\to}^d Z,
$$
where $Z\sim \mathcal N(0,1)$. 
\end{proposition}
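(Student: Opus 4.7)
The plan is to prove Proposition \ref{prop_law2} by exploiting the fact that $\mathcal L[2]$ lies in the second Wiener chaos of the Gaussian space generated by the coefficients $\{a_\mu\}_{\mu\in\mathcal E}$, and then invoking the celebrated Fourth Moment Theorem of Nualart--Peccati (see e.g.\ \cite[Chapter 5]{nopebook}): for a sequence of elements of a fixed Wiener chaos with unit variance, convergence to $\mathcal N(0,1)$ is equivalent to convergence of the fourth moment to $3$, equivalently to $\text{cum}_4(\cdot)\to 0$.

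First, I would make $\mathcal L[2]$ completely explicit. Specializing \eqref{chaos_comp} to $q=1$ gives three contributions, $(k,n,l)\in\{(1,0,0),(0,1,0),(0,0,1)\}$, of the form
\begin{equation*}
\mathcal L[2]=\sqrt M\int_\Sigma\!\Bigl[\tfrac{\beta_2\alpha_{0,0}}{2}H_2(F(\sigma))+\tfrac{\beta_0\alpha_{2,0}}{2}H_2(Z_1(\sigma))+\tfrac{\beta_0\alpha_{0,2}}{2}H_2(Z_2(\sigma))\Bigr]d\sigma.
\end{equation*}
Using the representation \eqref{defZ} of $(Z_1,Z_2)$ in terms of $(\widetilde\partial_1^\Sigma F,\widetilde\partial_2^\Sigma F)$ and the diagonalization of $\overline\Omega$, one rewrites $\mathcal L[2]$ as $\int_\Sigma P(F(\sigma),\nabla F(\sigma))\,d\sigma$ where $P$ is a quadratic form whose coefficients depend smoothly on $n(\sigma)$; cf.\ Remark \ref{remchaos}. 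Plugging in the Fourier expansions of $F$ and $\nabla F$ then expresses $\mathcal L[2]$ as a quadratic form in the real/imaginary parts of the $a_\mu$, i.e.\ as $I_2(f_m)$ for an explicit symmetric kernel $f_m$ on $\mathcal E_m\times\mathcal E_m$ whose entries are (up to universal factors) integrals over $\Sigma$ of the form $\int_\Sigma e^{i2\pi\langle\mu+\mu',\sigma\rangle}Q(\mu,\mu',n(\sigma))\,d\sigma$, with $Q$ a polynomial in the coordinates of $\mu/|\mu|$, $\mu'/|\mu'|$ and $n(\sigma)$.

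Next, by the Fourth Moment Theorem it suffices to show
\begin{equation*}
\frac{\text{cum}_4(\mathcal L[2])}{\text{Var}(\mathcal L[2])^2}\longrightarrow 0.
\end{equation*}
For elements of the second chaos this reduces to bounding the contraction $\|f_m\otimes_1 f_m\|^2$. Expanding and using isometry, $\text{cum}_4(\mathcal L[2])$ is a sum over $4$-tuples $(\mu^{(1)},\dots,\mu^{(4)})\in\mathcal E^4$ of integrals over $\Sigma^2$ weighted by $Q$-polynomials; the terms where the indices pair up (diagonal contractions) cancel in the cumulant, leaving a sum that is supported on the non-degenerate $4$-correlations $\mathcal X(4)$ together with tuples of the form $\mu^{(1)}+\mu^{(2)}+\mu^{(3)}+\mu^{(4)}=0$ but not pairing trivially. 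Using the bound $|\mathcal X_m(4)|=O(N^{7/4+o(1)})$ from \eqref{fourcorr}, together with the Kac--Rice/arithmetic estimates of \cite[\S 4]{Maf18} controlling the $\Sigma^2$-integrals by $1/|\mu^{(1)}+\mu^{(2)}|^2$ type factors, one obtains
\begin{equation*}
\text{cum}_4(\mathcal L[2])=O\!\left(\frac{m^2}{N^2}\cdot N^{-1/4+o(1)}\right),
\end{equation*}
while by Proposition \ref{prop_var2} and \eqref{vargen1} the assumption $3\mathcal I>A^2$ gives $\text{Var}(\mathcal L[2])\asymp m/N$, so the ratio vanishes.

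The main obstacle is the bookkeeping in the fourth cumulant computation: carefully identifying which $4$-correlations contribute after the cancellations coming from the cumulant (as opposed to the plain fourth moment), and carrying the $\Sigma$-integrals through so that the arithmetic bound \eqref{fourcorr} on $|\mathcal X(4)|$ is what controls the remaining term. This is the step where the difference between the two $\ell=4$ regimes (all $\mathcal C(4)$ vs.\ only $\mathcal X(4)$) becomes essential: the "degenerate" $4$-correlations contribute only to $3\,\text{Var}(\mathcal L[2])^2$ and thus exactly account for the Gaussian moment, leaving the genuinely non-Gaussian contribution confined to the smaller set $\mathcal X(4)$ whose cardinality is subcritical. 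Once this is done, the Fourth Moment Theorem delivers the CLT.
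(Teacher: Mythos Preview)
Your high-level idea---invoke the Fourth Moment Theorem for the second chaos---is a legitimate alternative to the paper's argument, but the way you propose to bound $\mathrm{cum}_4(\mathcal L[2])$ does not work. Writing $\mathcal L[2]=I_2(f_m)$, the fourth cumulant is (up to a constant) $\mathrm{tr}(f_m^4)=\sum_{\mu_1,\dots,\mu_4} f_m(\mu_1,\mu_2)f_m(\mu_2,\mu_3)f_m(\mu_3,\mu_4)f_m(\mu_4,\mu_1)$, a sum over \emph{cycles} in $\mathcal E$, with no constraint of the form $\sum_i\mu^{(i)}=0$. The sets $\mathcal C(4)$ and $\mathcal X(4)$ appear when one expands $\int_\Sigma r^4$ or studies $\mathcal L[4]$; they are not the combinatorics governing the contraction $f_m\otimes_1 f_m$. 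Your claim that the ``degenerate $4$-correlations contribute only to $3\,\mathrm{Var}^2$'' and that the remainder is supported on $\mathcal X(4)$ is therefore unjustified, and the bound $\mathrm{cum}_4=O(m^2 N^{-9/4+o(1)})$ via \eqref{fourcorr} has no basis.

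The paper proceeds more directly. Via Lemma~\ref{lem_2} one has $\mathcal L[2]=\mathcal L^a[2]+\mathcal L^b[2]$, where $\mathcal L^a[2]$ is a (finite) weighted sum of the \emph{independent} random variables $(|a_\mu|^2-1)$, $\mu\in\mathcal E^+$, with uniformly bounded weights, while $\mathrm{Var}(\mathcal L^b[2])=o(\mathrm{Var}(\mathcal L^a[2]))$; the latter only needs the pair estimate $\sum_{\mu\ne\mu'}|\mu-\mu'|^{-2}\ll N^2 m^{-1+o(1)}$ of \cite{bosaru}, not any $4$-correlation input. A classical Lindeberg argument then gives the CLT for $\mathcal L^a[2]$, hence for $\mathcal L[2]$. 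If you insist on the Fourth Moment route, the same decomposition is what makes it work: for the diagonal kernel one has $\mathrm{tr}((f_m^{\mathrm{diag}})^4)=\sum_\mu f_m(\mu,\mu)^4=O(m^2/N^3)=o(\mathrm{Var}(\mathcal L[2])^2)$, and the off-diagonal part is already negligible at the level of the Hilbert--Schmidt norm. Either way, the essential ingredient is Lemma~\ref{lem_2} and the pair-sum bound, not $|\mathcal X(4)|$.
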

We will prove Proposition \ref{prop_law2} in \S \ref{sec_prop2}. We are now in a position to establish our first main result.

\begin{proof}[Proof of Theorem \ref{th1} assuming Proposition \ref{prop_var2} and Proposition \ref{prop_law2}]
From (\ref{chaos_exp}) we can write 
$$
\frac{\mathcal L - \mathbb E[\mathcal L]}{\sqrt{\text{\rm Var}(\mathcal L)}} = \sum_{q=1}^{+\infty} \frac{\mathcal L[2q]}{\sqrt{\text{\rm Var}(\mathcal L)}},
$$
where the series converges in $L^2(\mathbb P)$. Thanks to Proposition \ref{prop_var2} and the orthogonality of Wiener chaoses we have 
$$
\frac{\mathcal L - \mathbb E[\mathcal L]}{\sqrt{\text{\rm Var}(\mathcal L)}} = \frac{\mathcal L[2]}{\sqrt{\text{\rm Var}(\mathcal L[2])}} + o_{\mathbb P}(1),
$$
where $o_{\mathbb P}(1)$ denotes a sequence of random variables converging to zero in probability. 
Proposition \ref{prop_law2} then allows to conclude the proof of Theorem \ref{th1}.
\end{proof}

\subsection{Proof of Theorem \ref{th2}} 

For a static surface $\Sigma$, equation \eqref{vargen1} only gives that the true asymptotic behaviour of the variance of the nodal intersection length is of lower order of magnitude than $m/N$. In \S \ref{sec_KR} we will prove Theorem \ref{prop1} establishing the fine asymptotic law of $\text{\rm Var}(\mathcal L)$ for well separated sequences of eigenvalues through Kac-Rice formula. In this circumstances it turns out that the second chaotic projection $\mathcal L[2]$ is negligible, and evaluating the fourth chaotic component yields (Proposition \ref{prop_var4}) that its variance  is asymptotic to $\text{\rm Var}(\mathcal L)$. As for Theorem \ref{th2}, the orthogonality of Wiener chaoses and a (non-Central) Limit Theorem for $\mathcal L[4]$ (Proposition \ref{prop_law4}) allow to deduce the same for $\mathcal L$ thus establishing Theorem \ref{th2}. 
\begin{proposition}\label{prop_var4}
Let $\Sigma\subset \mathbb T^3$ be a static surface as in Definition \ref{def_static}. Then, as $m\to +\infty$ s.t. $m\not\equiv 0,4,7 \pmod 8$ along well separated sequences of eigenvalues, we have 
$$
\text{\rm Var}(\mathcal L[4]) \sim \text{\rm Var}(\mathcal L)
$$
where the asymptotics for $\text{\rm Var}(\mathcal L)$ are established in Theorem \ref{prop1}.
\end{proposition}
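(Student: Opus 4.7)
The plan is to evaluate $\text{\rm Var}(\mathcal{L}[4])$ directly via Lemma \ref{lem2} and to match it with the asymptotic of $\text{\rm Var}(\mathcal{L})$ supplied by Theorem \ref{prop1}. Since orthogonality of Wiener chaoses gives $\text{\rm Var}(\mathcal{L}) = \sum_{q\ge 1}\text{\rm Var}(\mathcal{L}[2q])$, once I establish
\[
\text{\rm Var}(\mathcal{L}[4]) \sim \frac{\pi^2}{9600}\,\frac{m}{N^2}\bigl[81\,\mathcal{I}_4+35\,A^2\bigr],
\]
the remaining chaoses $\mathcal{L}[2q]$, $q\neq 2$, are automatically forced to contribute $o(m/N^2)$, which yields $\text{\rm Var}(\mathcal{L}[4])\sim\text{\rm Var}(\mathcal{L})$. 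In particular no separate argument is needed to kill the second chaos, provided the matching asymptotic for $\text{\rm Var}(\mathcal{L}[4])$ is carried out to leading order with the right constant.

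Starting from Lemma \ref{lem2} with $q=2$, the fourth chaos is the finite sum of six surface integrals $\int_\Sigma H_{2k}(F)H_{2n}(Z_1)H_{2l}(Z_2)\,d\sigma$ with $(k,n,l)$ ranging over non-negative integer triples with $k+n+l=2$, weighted by the explicit constants from \eqref{beta}--\eqref{alpha}. Using the identity $Z_1^2+Z_2^2=M^{-1}|\nabla_\Sigma F|^2$ from \eqref{Z_F} and expanding the Hermite polynomials, I first collect these six terms into the integral over $\Sigma$ of a bivariate polynomial of degree four evaluated at $(F(\sigma),|\nabla_\Sigma F(\sigma)|^2)$, as anticipated in Remark \ref{remchaos}. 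Squaring and integrating over $\Sigma^2$, the diagram (Feynman) formula then expresses $\text{\rm Var}(\mathcal{L}[4])$ as a double integral of a universal degree-four polynomial in the kernels $r(\sigma-\sigma')$, $\{\partial_i r\}$, $\{\partial_i\partial_j r\}$, contracted through the local frames $\bar{\Omega}^{1/2}(\sigma), \bar{\Omega}^{1/2}(\sigma')$ that diagonalize the surface-gradient covariance.

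Substituting the Fourier expansions from \eqref{cov}, each monomial becomes an arithmetic sum over $\mathcal{E}^\ell$ with $\ell\in\{4,6\}$, which I split according to whether the tuple lies in $\mathcal{C}(\ell)$ or not. The off-diagonal part is bounded by $m\cdot O(\mathfrak{S}_2+\mathfrak{S}_4+\mathfrak{S}_6)$ and absorbed into $o(m/N^2)$ by Assumption \ref{theassu}; on the correlation set the exponential phase cancels and the surface integrals reduce to polynomial contractions of $\mu/|\mu|$ which, by the equidistribution \eqref{equidi} and the static hypothesis of Definition \ref{def_static}, collapse onto a fixed linear combination of $\mathcal{I}_4$ and $A^2$. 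The main obstacle is the combinatorial bookkeeping: thirty-six pairs of Hermite products, each expanded into many contraction diagrams, must be tracked to extract the precise numerical constants $\pi^2/9600$, $81$ and $35$. The cleanest cross-check is to compare the resulting double integral term-by-term with the Kac--Rice expansion of Proposition \ref{KR}, whose quartic-in-$r$ part must coincide exactly with what the fourth chaos contributes; this simultaneously exhibits the cancellation of the second chaos variance under the static hypothesis (where $3\mathcal{I}-A^2=0$ and the residual is controlled via \eqref{fourcorr}) and the negligibility of chaoses of order $\ge 6$ (dominated by $\mathfrak{S}_6$ through \eqref{sixcorr}), completing the proof by orthogonality.
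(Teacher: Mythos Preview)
Your top-level strategy --- compute $\text{Var}(\mathcal{L}[4])$ to leading order, match it with Theorem~\ref{prop1}, and let orthogonality of chaoses absorb all other projections --- is exactly what the paper does. Where you diverge is in how the variance is extracted. You propose the Wick/diagram route: express $\text{Var}(\mathcal{L}[4])$ as a double surface integral of a degree-four polynomial in $r,\partial_i r,\partial_i\partial_j r$, then Fourier-expand. The paper instead Fourier-expands $\mathcal{L}[4]$ \emph{itself} first (Lemma~\ref{4dec}), isolating a ``diagonal'' piece $\mathcal{L}^a[4]$ that is an explicit quadratic form in the independent variables $(|a_\mu|^2-1)_{\mu\in\mathcal{E}^+}$, plus a remainder $\mathcal{L}^b[4]$ with $\text{Var}(\mathcal{L}^b[4])\ll m\cdot\max\{|\mathcal{X}(4)|/N^4,\mathfrak{S}_4,\mathfrak{S}_2\}$. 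The variance of $\mathcal{L}^a[4]$ is then read off from the elementary moments $\text{Cov}\big((|a_\mu|^2-1)(|a_{\mu'}|^2-1),(|a_{\mu''}|^2-1)(|a_{\mu'''}|^2-1)\big)$ and equidistribution, bypassing the thirty-six diagram pairs entirely. The payoff of the paper's organisation is that $\mathcal{L}^a[4]$ is precisely the object reused for the non-central limit (Proposition~\ref{prop_law4}): it is already written as a polynomial in the $W_{ij}$'s. Your kernel route would recover the same variance constant but would force you to redo this decomposition for the limit theorem.

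Two minor slips to flag. First, for the fourth chaos only $\ell=4$ arithmetic sums arise, not $\ell\in\{4,6\}$; consequently $\mathfrak{S}_6$ plays no role in bounding $\text{Var}(\mathcal{L}[4])$ (it enters only in the Kac--Rice error for the \emph{full} variance, Proposition~\ref{KR}). Second, within $\mathcal{C}(4)$ you must distinguish the symmetric correlations $\mathcal{D}'(4)$ --- where the exponential phase genuinely cancels and the surface integrals become the polynomial contractions you describe --- from the non-degenerate set $\mathcal{X}(4)$, where the phase is trivial but the contribution is negligible simply because $|\mathcal{X}(4)|=O(N^{7/4+o(1)})$ by \eqref{fourcorr}.
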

Proposition \ref{prop_var4} will be proven in \S \ref{sec_4chaos}. Let us now investigate the asymptotic distribution of the fourth chaotic projection.
\begin{proposition}\label{prop_law4}
Let $\Sigma\subset \mathbb T^3$ be a static surface as in Definition \ref{def_static}. Then, as $m\to +\infty$ s.t. $m\not\equiv 0,4,7 \pmod 8$ along well separated sequences of eigenvalues,
$$
\frac{\mathcal L[4]}{\sqrt{\text{\rm Var}(\mathcal L[4])}} \mathop{\to}^d \mathcal M,
$$
where $\mathcal M$ is defined in (\ref{M}). 
\end{proposition}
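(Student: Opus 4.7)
The strategy is to show that, after normalisation, $\mathcal L[4]$ is asymptotically equal in $L^2$ to a fixed quadratic form evaluated at a six-dimensional Gaussian vector whose entries converge to i.i.d.\ standard Gaussians. The starting point is the explicit formula of Lemma \ref{lem2},
\[
\mathcal L[4] = \sqrt M \sum_{k+n+l=2} \frac{\beta_{2k}\alpha_{2n,2l}}{(2k)!(2n)!(2l)!}\int_\Sigma H_{2k}(F(\sigma))H_{2n}(Z_1(\sigma))H_{2l}(Z_2(\sigma))\,d\sigma,
\]
which, by inverting the change of variables \eqref{defZ} and using the Hermite identities as in the promised Lemma \ref{4dec} (cf.\ Remark \ref{remchaos}), can be rewritten as the integral over $\Sigma$ of a bivariate polynomial of degree four in $F(\sigma)$ and $|\nabla_\Sigma F(\sigma)|^2$. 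Since $|\nabla_\Sigma F|^2$ is a quadratic form in $\nabla F$, this yields a degree four polynomial in the Gaussian vector $(F(\sigma),\partial_1F(\sigma),\partial_2F(\sigma),\partial_3F(\sigma))$. Substituting the Fourier expansion of $F$ and of its derivatives produces a degree four polynomial in the complex Gaussian coefficients $\{a_\mu\}_{\mu\in\mathcal E}$.

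Next, I would isolate the correct low-dimensional Gaussian structure. The relevant random variables are the (real) symmetric bilinear functionals
\[
T_{ij} := \frac{1}{N}\sum_{\mu\in\mathcal E}\frac{\mu_i\mu_j}{m}\,|a_\mu|^2, \qquad i\le j,\ i,j\in\{1,2,3\},
\]
or rather appropriate $\mathcal W$-symmetric linear combinations of them, centered and normalised so as to produce six variables $X_{ij}$ whose covariance matrix converges to the identity. The convergence of covariances, and the asymptotic joint Gaussianity of $(X_{ij})$, follow from the equidistribution \eqref{equidi} and the invariance of $\mathcal E$ under the group $\mathcal W$ in \eqref{group}: the required angular integrals over $\mathcal S^2$ reduce to polynomial integrals that are easy to compute. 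Now, following the expansion of $\mathcal L[4]$ as a degree-four polynomial in the $\{a_\mu\}$, I would split the resulting arithmetic sums into a \emph{diagonal} part, indexed by symmetric tuples (pairings into matched $\mu=-\mu'$ type terms), and an \emph{off-diagonal} remainder, indexed by $\mathcal E^\ell\setminus\mathcal C(\ell)$ for $\ell\in\{2,4,6\}$. The well-separated hypothesis (Assumption \ref{theassu}) guarantees that the off-diagonal contribution is controlled in $L^2$ by $m\cdot(\mathfrak S_2+\mathfrak S_4+\mathfrak S_6)=o(m/N^2)$, which is negligible compared with $\Var(\mathcal L)$ as given by Theorem \ref{prop1}.

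The diagonal part then assembles, after using the static hypothesis to kill the lower-order terms that would otherwise dominate, into a quadratic polynomial in the $X_{ij}$'s. The coefficients in front of $X_{ij}^2-1$ and $X_{ij}X_{lk}$ are obtained by explicit integration over $\Sigma$ of products of components of the unit normal $n(\sigma)$, weighted by the Hermite/expansion constants $\beta_{2k}\alpha_{2n,2l}$; matching this computation against the list in \eqref{M} identifies exactly the $c_{ij}$ and $c_{ijlk}$. The asymptotic law \eqref{M} of $\mathcal L[4]/\sqrt{\Var(\mathcal L[4])}$ then follows by continuous mapping from the joint convergence of $(X_{ij})_{i\le j}$ to six independent $\mathcal N(0,1)$'s, together with the $L^2$ negligibility of the off-diagonal remainder. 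The main obstacle is the combinatorial and algebraic bookkeeping: tracking all the degree-four monomials in the $\{a_\mu\}$, organising them into the correct symmetric and non-symmetric correlation classes, and checking that the $\mathcal W$-symmetric angular integrals produced by \eqref{equidi} reproduce \emph{precisely} the twenty-odd coefficients in the statement. The static assumption is used repeatedly and crucially to ensure that no surviving term of lower chaotic degree contaminates the final quadratic form.
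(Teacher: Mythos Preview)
Your strategy coincides with the paper's: decompose $\mathcal L[4]$ into a ``diagonal'' quadratic form in the variables
\[
W_{ij}=\frac{1}{m\sqrt{N/2}}\sum_{\mu\in\mathcal E^+}(|a_\mu|^2-1)\,\mu_i\mu_j
\]
plus a remainder, prove a multivariate CLT for $(W_{ij})_{i\le j}$, diagonalise the limiting covariance to produce the i.i.d.\ $X_{ij}$'s, and conclude by continuous mapping. Two points in your sketch require sharpening.

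First, your diagonal/off-diagonal dichotomy is incomplete. You split quadruples into ``symmetric pairings'' $\mathcal D'(4)$ and the complement $\mathcal E^4\setminus\mathcal C(4)$, but this leaves out the \emph{non-degenerate} $4$-correlations $\mathcal X(4)=\mathcal C(4)\setminus\mathcal D'(4)$, which are neither. Their contribution is not covered by $\mathfrak S_4$ (they satisfy $\sum\mu_i=0$, so they do not appear in that sum). The paper bounds this piece separately via the arithmetic input $|\mathcal X(4)|=O(N^{7/4+o(1)})$, giving $|\mathcal X(4)|/N^4=o(N^{-2})$; without this estimate the remainder is not controlled. Relatedly, $\mathfrak S_6$ plays no role in the fourth chaos analysis: the bound the paper actually uses for $\mathcal L^b[4]$ is $\Var(\mathcal L^b[4])\ll m\cdot\max\{|\mathcal X(4)|/N^4,\mathfrak S_4,\mathfrak S_2\}$.

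Second, equidistribution and $\mathcal W$-invariance alone give you only the convergence of the covariance matrix of $(W_{ij})$; they do not by themselves yield asymptotic Gaussianity. The paper invokes the multivariate Fourth Moment Theorem of Peccati--Tudor (the $W_{ij}$'s live in the second Wiener chaos), together with the equidistribution-based asymptotics for $\psi(m)=N^{-1}\sum_\mu\mu_1^4$. A Lindeberg argument on the i.i.d.\ summands $(|a_\mu|^2-1)$, $\mu\in\mathcal E^+$, would also work, but you should name one of these mechanisms explicitly.
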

We will prove Proposition \ref{prop_law4} in \S \ref{sec_4chaos}. We can now establish our second main result.

\begin{proof}[Proof of Theorem \ref{th2} assuming Theorem \ref{prop1}, Proposition \ref{prop_var2} and Proposition \ref{prop_law2}]
From (\ref{chaos_exp}) we can write 
$$
\frac{\mathcal L - \mathbb E[\mathcal L]}{\sqrt{\text{\rm Var}(\mathcal L)}} = \sum_{q=1}^{+\infty} \frac{\mathcal L[2q]}{\sqrt{\text{\rm Var}(\mathcal L)}},
$$
where the series converges in $L^2(\mathbb P)$. Thanks to Proposition \ref{prop_var2} and the orthogonality of Wiener chaoses we have 
$$
\frac{\mathcal L - \mathbb E[\mathcal L]}{\sqrt{\text{\rm Var}(\mathcal L)}} = \frac{\mathcal L[4]}{\sqrt{\text{\rm Var}(\mathcal L[4])}} + o_{\mathbb P}(1),
$$
where $o_{\mathbb P}(1)$ denotes a sequence of random variables converging to zero in probability. 
Proposition \ref{prop_law4} then allows to conclude the proof of Theorem \ref{th2}.
\end{proof}

In order to prove Corollary \ref{corD}, we will need the following: let $\Sigma\subset \mathbb T^3$ be the two-dimensional unit sphere, then as $m\to +\infty$ s.t. $m\not\equiv 0,4,7 \pmod 8$ along well separated sequences of eigenvalues,
\begin{equation}\label{eqD}
    \Corr(\mathcal L[4], \mathcal A[4]) \longrightarrow 1,
\end{equation}
where $\mathcal A[4]=\mathcal A_m[4]$ denotes the fourth chaotic component of the area $\mathcal A=\mathcal A_m$ of the nodal set $F_m^{-1}(0)$ \cite[(4.14)]{Cam19}. Actually,  the dominant terms in $\mathcal A[4]$ and $\mathcal L[4]$ \emph{coincide}, up to a factor depending on $m$, see \S \ref{sec_4chaos} for details.

\begin{proof}[Proof of Corollary \ref{corD} assuming Proposition \ref{prop_var4} and (\ref{eqD})]
Thanks to the orthogonality of Wiener chaoses, 
\begin{eqnarray*}
\Corr(\mathcal L, \mathcal A) = \frac{\Cov(\mathcal L[4], \mathcal A[4])}{\sqrt{\Var(\mathcal L) \Var(\mathcal A)}}.
\end{eqnarray*}
The result then follows from (\ref{eqD}), bearing in mind Proposition \ref{prop_var4} and both \cite[Theorem 1.2]{benmaf}, \cite[Theorem 1]{Cam19} ensuring that
\begin{equation*}
    \Var(\mathcal L) \sim \Var(\mathcal L[4]),\qquad 
    \Var(\mathcal A) \sim \Var(\mathcal A[4])
\end{equation*}
respectively. 
\end{proof}

\section{Second chaotic component: analytic formulas}

\subsection{Proofs of Proposition \ref{prop_var2} and Proposition \ref{prop_law2}}\label{sec_prop2}

From (\ref{chaos_comp}) for $q=1$, recalling that the coefficients $\alpha_{n,m}$ in (\ref{alpha}) are symmetric, the second chaotic component of the nodal intersection length can be written as  
\begin{equation}\label{exp_2}
\mathcal L[2] = \sqrt M \left ( \frac{\beta_{2}\alpha_{0,0}}{2!} \int_{\Sigma} H_2(F(\sigma))\,d\sigma + \frac{\beta_{0}\alpha_{2,0}}{2!} \int_{\Sigma} \left ( H_2(Z_1(\sigma))+H_2(Z_2(\sigma))\right )\,d\sigma     \right ),
\end{equation}
where as before $M=4\pi^2m/3$. Let us introduce some more notation: 
$$\mathcal E^+=\mathcal E_m^+:=\lbrace \mu \in \mathcal E : \mu_1 >0\rbrace$$
if $m$ is not a sum of two squares, otherwise if $m$ is not a square
$\mathcal E^+=\mathcal E_m^+:=\lbrace \mu \in \mathcal E : \mu_1 >0\rbrace \cup \lbrace \mu\in \mathcal E : \mu_1=0, \mu_2 >0 \rbrace$ else $\mathcal E^+=\mathcal E_m^+:=\lbrace \mu \in \mathcal E : \mu_1 >0\rbrace \cup \lbrace \mu\in \mathcal E : \mu_1=0, \mu_2 >0 \rbrace \cup \lbrace (0,0,\sqrt m)\rbrace$. 
\begin{remark}\label{rem1}\rm
The random variables $a_\mu, \mu\in \mathcal E^+$ in Definition \ref{def_arw} are independent.
\end{remark}
A careful investigation of the r.h.s. of (\ref{exp_2}) leads to the following key result whose proof is given in Appendix \ref{sec_lem2}. 
\begin{lemma}\label{lem_2}
For every $m\in S$ we have the following decomposition
\begin{equation}\label{2dec}
\mathcal L[2] = \mathcal L^a[2] + \mathcal L^b[2],
\end{equation}
where the first term on the r.h.s. of (\ref{2dec}) is
\begin{eqnarray}\label{2a}
   \mathcal L^a[2]=\mathcal L_m^a[2] :=  \frac{\sqrt M}{8 \sqrt{\frac{N}{2} }}\cdot   \frac{1}{\sqrt{\frac{N}{2} }} \sum_{\mu\in \mathcal E^+} (|a_\mu |^2  -1)   \left ( |\Sigma | 
-   3\int_\Sigma \left \langle \frac{\mu}{|\mu|}, n(\sigma)\right \rangle^2 \, d\sigma  \right ),
\end{eqnarray}
and the second one is 
\begin{eqnarray}\label{2b}
\mathcal L^b[2] = \mathcal L^b_m[2]
     &:=& \frac{\sqrt M}{8N} \sum_{\mu\ne \mu'} a_\mu \overline{a_{\mu'}}  \int_{\Sigma}e_{\mu}(\sigma) e_{-\mu'}(\sigma)\cr
     &&\times  \left ( -2 + 3  \left \langle \frac{\mu}{|\mu|}, \frac{\mu'}{|\mu'|} \right \rangle - 3 \left \langle \frac{\mu}{|\mu|}, n(\sigma) \right \rangle \left \langle \frac{\mu'}{|\mu'|}, n(\sigma) \right \rangle \right )d\sigma.
\end{eqnarray}
\end{lemma}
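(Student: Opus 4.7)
The plan is to expand each of the two summands on the right-hand side of (\ref{exp_2}) in terms of the Fourier coefficients $a_\mu$, separate the $\mu=\mu'$ (``diagonal'') contributions from the $\mu\neq\mu'$ (``off-diagonal'') ones, and check that the two pieces that emerge reproduce $\mathcal L^a[2]$ and $\mathcal L^b[2]$ respectively. A routine evaluation of (\ref{beta}) and (\ref{alpha}) at $k=0,1$ (using $H_0(0)=1$, $H_2(0)=-1$, $p_0(1/4)=1$, $p_1(1/4)=1/2$) collapses the two scalar prefactors of (\ref{exp_2}) to $-1/4$ and $1/8$.

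Squaring the Fourier series of $F$ from Definition \ref{def_arw} and using $\overline{a_{\mu'}}=a_{-\mu'}$, I would rewrite
\begin{equation*}
H_2(F(\sigma))=\frac{1}{N}\sum_{\mu\in\mathcal E}(|a_\mu|^2-1)+\frac{1}{N}\sum_{\mu\neq\mu'}a_\mu\overline{a_{\mu'}}\,e_{\mu}(\sigma)e_{-\mu'}(\sigma).
\end{equation*}
For the second integrand the key point is not to attempt a direct description of how $Z_1,Z_2$ depend on $\sigma$ through the orthogonal matrix $\overline{\Omega}^{1/2}$, but instead to exploit the clean identity $H_2(Z_1)+H_2(Z_2)=|(Z_1,Z_2)|^2-2=|\nabla_\Sigma F(\sigma)|^2/M-2$ supplied by (\ref{Z_F}). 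Writing $|\nabla_\Sigma F|^2=|\nabla F|^2-\langle\nabla F,n\rangle^2$ and plugging in the Fourier series for $\nabla F$ yields an analogous splitting, with diagonal coefficient proportional to $1-\langle\mu/|\mu|,n(\sigma)\rangle^2$ and off-diagonal coefficient proportional to $\langle\mu/|\mu|,\mu'/|\mu'|\rangle-\langle\mu/|\mu|,n(\sigma)\rangle\langle\mu'/|\mu'|,n(\sigma)\rangle$.

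The crucial nontrivial input is then the identity
\begin{equation*}
\frac{1}{N}\sum_{\mu\in\mathcal E}\langle\mu/|\mu|,n\rangle^2=\frac{1}{3}\qquad\text{for every unit vector }n\in\mathcal S^2,
\end{equation*}
which follows from the signed-permutation invariance (\ref{group}) of $\mathcal E$: the cross terms $\sum_{\mu}\mu_i\mu_j/|\mu|^2$ vanish by a single sign flip, and each diagonal sum $\sum_{\mu}\mu_i^2/|\mu|^2$ equals $N/3$ by coordinate permutation. This identity is exactly what makes the deterministic constant $-2$ in $H_2(Z_1)+H_2(Z_2)$ cancel against the deterministic part of the diagonal, leaving only the zero-mean objects $|a_\mu|^2-1$. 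Combining the $-1/4$ coefficient from $H_2(F)$ with the $(1/8)\cdot(3/m)$ one from $H_2(Z_1)+H_2(Z_2)$ (the factor $1/m$ arising from $|\mu|^2=m$ in the diagonal of $|\nabla F|^2$) produces the single overall $1/8$ in front of $(A-3\int\langle\mu/|\mu|,n\rangle^2\,d\sigma)$ appearing in (\ref{2a}). Finally, $|a_{-\mu}|^2=|a_\mu|^2$ halves the diagonal sum and restricts it to the independent family indexed by $\mathcal E^+$ (Remark \ref{rem1}), recovering (\ref{2a}); the off-diagonal part, for which no analogous symmetry reduction applies, assembles directly into (\ref{2b}).

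The main obstacle is not conceptual but purely a matter of bookkeeping: tracking all prefactors consistently and handling the corner cases in the definition of $\mathcal E^+$ (when $m$ is a sum of two squares, or a perfect square). The one genuinely substantive algebraic point is the cancellation of the deterministic $-2$ via the signed-permutation identity above; without it, the claimed decomposition in which both $\mathcal L^a[2]$ and $\mathcal L^b[2]$ are pure second-order Wiener chaos elements would not hold.
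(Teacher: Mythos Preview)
Your proposal is correct and follows essentially the same route as the paper's proof: expand $H_2(F)$ and $H_2(Z_1)+H_2(Z_2)=|\nabla_\Sigma F|^2/M-2$ in Fourier series, split into diagonal and off-diagonal parts, center the diagonal via the signed-permutation identity $\frac{1}{N}\sum_{\mu}\langle\mu/|\mu|,n\rangle^2=1/3$, and halve using $|a_{-\mu}|^2=|a_\mu|^2$. Your direct use of $|\nabla_\Sigma F|^2=|\nabla F|^2-\langle\nabla F,n\rangle^2$ is a mild streamlining of the paper's coordinate-by-coordinate computation through the auxiliary coefficients $c_i,c_{ij}$, but the substance is identical.
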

\begin{proof}[Proof of Proposition \ref{prop_var2} assuming Lemma \ref{lem_2}]
From (\ref{2dec}) we have 
\begin{equation}\label{ovvio}
    \Var (\mathcal L[2]) =  \Var (\mathcal L^a[2]) + \Var (\mathcal L^b[2]) + 2 \Cov (\mathcal L^a[2], \mathcal L^b[2]).
\end{equation}
First we compute the variance of $\mathcal L^a[2]$. Recalling Remark \ref{rem1} and the fact that $\Var(|a_\mu |^2  -1)=1$  for every $\mu\in \mathcal E$,
we can write 
\begin{eqnarray}\label{hola6}
\Var \left (  \mathcal L^a[2] \right ) &=&  \frac{M}{64 \cdot \frac{N}{2} }   \frac{1}{\frac{N}{2} } \sum_{\mu\in \mathcal E^+} \Var(|a_\mu |^2  -1) \left ( |\Sigma | 
-   3\int_\Sigma \left \langle \frac{\mu}{|\mu|}, n(\sigma)\right \rangle^2 \, d\sigma  \right )^2\cr
&=& \frac{M}{64 \cdot \frac{N}{2} }   \frac{1}{N } \sum_{\mu\in \mathcal E} \left ( |\Sigma |^2 -   6|\Sigma |\int_\Sigma \left \langle \frac{\mu}{|\mu|}, n(\sigma)\right \rangle^2 \, d\sigma  +9\mathcal H(\mathcal E) \right )\cr
&=& \frac{M}{64\cdot \frac{N}{2}} \left (- A^2 + 9 \mathcal H(\mathcal E) \right ),
\end{eqnarray}
where $\mathcal H(\mathcal E)$ is defined as in (\ref{H}) and we used the fact that 
\begin{equation*}
    \frac{1}{N } \sum_{\mu\in \mathcal E} \int_\Sigma \left \langle \frac{\mu}{|\mu|}, n(\sigma)\right \rangle^2 \, d\sigma = \frac13 |\Sigma|^2.
\end{equation*}
From \cite[Lemma 5.3]{Maf18} applied to the r.h.s. of (\ref{hola6}) we have that, as $m\to +\infty$ s.t. $m\not\equiv 0,4,7 \pmod 8$,
\begin{eqnarray}\label{var2a}
    \text{\rm Var}(\mathcal L^a[2]) &=& \frac{4\pi^2m/3}{64 \cdot \frac{N}{2} }  \left ( -|\Sigma |^2  + \frac{9}{15}(|\Sigma |^2 + 2 \mathcal I) + O\left (m^{-1/28 +o(1)} \right ) \right )\cr
    &=& \frac{\pi^2 m}{60 N} \left (3\mathcal I - |\Sigma |^2 + O\left (m^{-1/28 +o(1)} \right ) \right ),
\end{eqnarray}
hence from (\ref{vargen1}) the variance of $\mathcal L^a[2]$ is asymptotic to the variance of nodal intersection length, i.e., as $m\to +\infty$ s.t. $m\not\equiv 0,4,7 \pmod 8$
\begin{equation}\label{bip1}
    \text{\rm Var}(\mathcal L) \sim \text{\rm Var}(\mathcal L^a[2]).
\end{equation}
Let us now investigate the variance of $\mathcal L^b[2]$ in (\ref{2b}). We have 
\begin{eqnarray*}
\text{\rm Var}(\mathcal L^b[2]) &=& \frac{M}{64 N^2}\sum_{\mu\ne \mu'}\sum_{\mu''\ne \mu'''} \mathbb E[a_\mu \overline{a_{\mu'}}a_{\mu''} \overline{a_{\mu'''}}  ]\int_{\Sigma^2}e_{\mu}(\sigma) e_{-\mu'}(\sigma)e_{\mu''}(\sigma') e_{-\mu'''}(\sigma')\cr
&& \times   \left ( -2 + 3  \left \langle \frac{\mu}{|\mu|}, \frac{\mu'}{|\mu'|} \right \rangle - 3 \left \langle \frac{\mu}{|\mu|}, n(\sigma) \right \rangle \left \langle \frac{\mu'}{|\mu'|}, n(\sigma) \right \rangle \right ) \cr
&& \times   \left ( -2 + 3  \left \langle \frac{\mu''}{|\mu''|}, \frac{\mu'''}{|\mu'''|} \right \rangle - 3 \left \langle \frac{\mu''}{|\mu|''}, n(\sigma') \right \rangle \left \langle \frac{\mu'''}{|\mu'''|}, n(\sigma') \right \rangle \right )d\sigma d\sigma' \cr
&\le& C \frac{m}{N^2}\sum_{\mu\ne \mu'} \left | \int_{\Sigma}e_{\mu}(\sigma) e_{-\mu'}(\sigma)\right. \cr
&& \left. \times \left ( -2 + 3  \left \langle \frac{\mu}{|\mu|}, \frac{\mu'}{|\mu'|} \right \rangle - 3 \left \langle \frac{\mu}{|\mu|}, n(\sigma) \right \rangle \left \langle \frac{\mu'}{|\mu'|}, n(\sigma) \right \rangle \right )d\sigma \right |^2,
\end{eqnarray*}
for some absolute constant $C>0$. 
Thanks to Proposition 5.4 in \cite{Maf18} we can bound the r.h.s. of the previous inequality and get  
\begin{equation}\label{aa}
    \text{\rm Var}(\mathcal L^b[2]) \le C \frac{m}{N^2} \sum_{\mu\ne \mu'} \frac{1}{|\mu-\mu'|^2}.
\end{equation}
Applying \cite[Theorem 1.1]{bosaru} we obtain 
\begin{equation}\label{aaa}
    \sum_{\mu\ne \mu'} \frac{1}{|\mu-\mu'|^2}\ll \frac{N^2}{(\sqrt m)^{2-o(1)}},
\end{equation}
and substituting (\ref{aaa}) into (\ref{aa}) (also recalling (\ref{Nbehavior})) we deduce that
as $m\to +\infty$ s.t. $m\not\equiv 0,4,7 \pmod 8$,
\begin{equation}\label{var2b}
    \text{\rm Var}(\mathcal L^b[2]) = o\left ( \text{\rm Var}(\mathcal L^a[2]) \right ).
\end{equation}
By Cauchy-Schwartz inequality, thanks to (\ref{var2a}) and (\ref{var2b}) we have 
$$
 \text{\rm Cov}(\mathcal L^a[2], \mathcal L^b[2]) = o\left ( \text{\rm Var}(\mathcal L^a[2]) \right )
$$
that together with (\ref{ovvio}), (\ref{var2a}) and (\ref{var2b}) gives, as $m\to +\infty$ s.t. $m\not\equiv 0,4,7 \pmod 8$,
\begin{equation}\label{bip2}
     \text{\rm Var}(\mathcal L[2]) \sim \text{\rm Var}(\mathcal L^a[2]).
\end{equation}
Finally (\ref{bip1}) and (\ref{bip2}) 
allow to conclude the proof. 
\end{proof}

\begin{proof}[Proof of Proposition \ref{prop_law2} assuming Lemma \ref{lem_2}] From Lemma \ref{lem_2} and the proof of Proposition \ref{prop_var2} we immediately deduce that as $m\to +\infty$ s.t. $m\not\equiv 0,4,7 \pmod 8$
\begin{equation}\label{cip1}
    \frac{\mathcal L[2]}{\sqrt{\text{\rm Var}(\mathcal L[2]}} =  \frac{\mathcal L^a[2]}{\sqrt{\text{\rm Var}(\mathcal L^a[2])}} + o_{\mathbb P}(1),
\end{equation}
hence it suffices to study the asymptotic distribution of $\mathcal L^a[2]$. We have 
\begin{eqnarray}
&&\frac{\mathcal L^a[2]}{\sqrt{\text{\rm Var}(\mathcal L^a[2])}} =    \frac{1}{\sqrt{\frac{N}{2} }} \sum_{\mu\in \mathcal E^+} (|a_\mu |^2  -1) \frac{|\Sigma | 
-   3\int_\Sigma \left \langle \frac{\mu}{|\mu|}, n(\sigma)\right \rangle^2 \, d\sigma   }{  \sqrt{9\mathcal H(\mathcal E) - A^2}}.
\end{eqnarray}
Now we can apply Lindeberg's criterion (see e.g. \cite[Remark 11.1.2]{nopebook}): since 
$$
\max_{\mu\in \mathcal E^+}  \frac{1}{\sqrt{\frac{N}{2}}}\left | \frac{|\Sigma | 
-   3\int_\Sigma \left \langle \frac{\mu}{|\mu|}, n(\sigma)\right \rangle^2 \, d\sigma   }{  \sqrt{9\mathcal H(\mathcal E) - A^2}}\right | \to 0 
$$
we have that as $m\to +\infty$ s.t. $m\not\equiv 0,4,7 \pmod 8$
$$
\frac{\mathcal L^a[2]}{\sqrt{\text{\rm Var}(\mathcal L^a[2])}}\to Z,
$$
where $Z\sim \mathcal N(0,1)$ that together with (\ref{cip1}) allows to conclude the proof. 
\end{proof}

\section{Proof of the Kac-Rice approximate formula Proposition \ref{KR}}
\label{sec_KR}
In \cite{Maf18}, it is shown that the conditions of Kac-Rice for the second moment of the nodal intersection length $\mathcal{L}$ are verified \cite[Section 3]{Maf18}. Therefore,
\begin{equation}
\label{snsprepre}
\mathbb{E}[\mathcal{L}^2]
=M\iint_{\Sigma^2}K_{2;\Sigma}(\sigma,\sigma')d\sigma d\sigma',
\end{equation}
where $K_{2;\Sigma}$ is the (scaled) 2-point function
\begin{equation*}
K_{2;\Sigma}=\frac{1}{2\pi\sqrt{1-r^2}}\mathbb{E}[|(\hat{W_1},\hat{W_2})|\cdot|(\hat{W_3},\hat{W_4})|],
\end{equation*}
\begin{equation}
\label{hattheta}\hat{W}=(\hat{W_1},\hat{W_2},\hat{W_3},\hat{W_4})\sim\mathcal{N}(0,\hat{\Theta}),
\qquad\qquad
\hat{\Theta}=I_4+
\begin{pmatrix}
X & Y \\ Y' & X'
\end{pmatrix},
\end{equation}
with $X,Y,X',Y'$ appropriate $2\times 2$ matrices. These are defined as follows. Recall that $\Omega$ is the covariance matrix of $\nabla_\Sigma F$. We denote
\begin{equation}
	\label{matL}
	L:=\begin{pmatrix}
	1 & 0
	\\ 0 & 1
	\\ 0 & 0
	\end{pmatrix}
\end{equation}
and
\begin{equation}
\label{Q}
Q(\sigma)=
\frac{1}{n_3^2+n_3}\cdot
\begin{pmatrix}
n_1^2+n_3^2+n_3 & n_1n_2 \\ n_1n_2 & n_2^2+n_3^2+n_3
\end{pmatrix}=Q^T,
\end{equation}
a square root of $(L^T\Omega L)^{-1}$. Moreover, $Q':=Q(\sigma')$. Let $D$ be the row vector of partial derivatives of $r$,
\begin{equation*}
	D=D(\sigma,\sigma'):=
	\left(\frac{\partial r}{\partial \sigma_1},
	\frac{\partial r}{\partial \sigma_2},
	\frac{\partial r}{\partial \sigma_3}\right)
	=\frac{2\pi i}{N}
	\sum_{\mu\in\mathcal{E}} e^{2\pi i\langle\mu,\sigma-\sigma'\rangle}\cdot\mu,
	\end{equation*}
and $H$ the Hessian matrix,
\begin{equation*}
	H=H(\sigma,\sigma'):=H_{r}
	=-\frac{4\pi^2}{N}
		\sum_{\mu\in\mathcal{E}} e^{2\pi i\langle\mu,\sigma-\sigma'\rangle}\cdot\mu^T\mu.
	\end{equation*}
Then we set
\begin{align*}
& X:=X(\sigma,\sigma')=-
\frac{1}{(1-r^2)M}QL^T\Omega D^TD\Omega LQ,
\\
& X':=X(\sigma',\sigma),
\\
& Y:=Y(\sigma,\sigma')=-\frac{1}{M}\left[QL^T\Omega\left(H+\frac{r}{1-r^2}D^TD\right)\Omega'LQ'\right],
\\
& Y':=Y(\sigma',\sigma).
\end{align*}

In \cite[Lemma 4.5]{Maf18} we found an asymptotic expression for the scaled two-point function, via a Taylor expansion of a perturbed $4\times 4$ standard Gaussian matrix
\begin{equation*}
I_4+
\begin{pmatrix}
X & Y \\ Y' & X'
\end{pmatrix},
\end{equation*}
to order two; in the next lemma, we Taylor expand it to order four. The method employed was used first by Berry \cite{berry2}. The case where $X'=X$ and $Y'=Y$ was treated in \cite[Lemma 5.1]{krkuwi} ($4\times 4$ Gaussian matrix) and \cite[Lemma 5.8]{benmaf} ($6\times 6$).
\begin{lemma}
\label{lemma5.1}
Suppose that
\begin{equation*} \hat{W}=(\hat{W_1},\hat{W_2},\hat{W_3},\hat{W_4})\sim\mathcal{N}(0,\hat{\Theta}),
\end{equation*}
where
\begin{equation*}
\hat{\Theta}=I_4+
\begin{pmatrix}
X & Y \\ Y' & X'
\end{pmatrix}
\end{equation*}
is positive definite with real entries, and the $2\times 2$ blocks $X,X',Y,Y'$ are symmetric. Assume further that $rk(X)=rk(X')=1$. Then one has
\begin{multline*}
\mathbb{E}[\|(\hat{W_1},\hat{W_2})\|\cdot\|(\hat{W_3},\hat{W_4})\|]
=\frac{\pi}{2}+\frac{\pi}{2}\cdot\frac{1}{8}\left[2\tr(X)+2\tr(X')+\tr(Y'Y)\right]
\\+\frac{\pi}{2}\cdot\frac{1}{512}\left[32\tr(X)\tr(X')-16\tr(XYY')-16\tr(X'Y'Y)-24(\tr X)^2-24(\tr X')^2
\right.
\\
\left.
+2\tr(Y'YY'Y)+(\tr Y'Y)^2-8\tr(X)\tr(Y'Y)-8\tr(X')\tr(Y'Y)
\right]
\\+O(X^3+X'^3+Y^6+Y'^6).
\end{multline*}
\end{lemma}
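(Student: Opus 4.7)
I will follow the perturbative strategy of Berry \cite{berry2}, developed in \cite[Lemma 5.1]{krkuwi} and \cite[Lemma 5.8]{benmaf} in the symmetric case $X=X'$, $Y=Y'$, and extend it to our genuinely non-symmetric $4\times 4$ setting. Write $\hat\Theta=I_4+\Lambda$ with $\Lambda=\begin{pmatrix}X&Y\\Y'&X'\end{pmatrix}$, viewed as a small perturbation of the identity. The starting point is the Laplace identity $\sqrt{a}=\frac{1}{2\sqrt\pi}\int_0^\infty(1-e^{-ta})t^{-3/2}\,dt$ for $a>0$, which yields
\begin{equation*}
\mathbb{E}[\|(\hat W_1,\hat W_2)\|\,\|(\hat W_3,\hat W_4)\|]=\frac{1}{4\pi}\iint_{(0,\infty)^2}\frac{\mathbb{E}\bigl[(1-e^{-t_1\|(\hat W_1,\hat W_2)\|^2})(1-e^{-t_2\|(\hat W_3,\hat W_4)\|^2})\bigr]}{(t_1t_2)^{3/2}}\,dt_1\,dt_2.
\end{equation*}
Expanding the product and using the Gaussian identity $\mathbb{E}[e^{-\tfrac12 \hat W^T M \hat W}]=\det(I_4+\hat\Theta M)^{-1/2}$ with $M$ a diagonal matrix of the form $\mathrm{diag}(2t_1,2t_1,2t_2,2t_2)$ (and its marginal analogues) reduces the inner expectation to a combination of four determinants of $I_4+\hat\Theta M$-type matrices.

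Next I would Taylor expand each determinant via $\log\det(I+A)=\sum_{k\ge 1}(-1)^{k+1}\tr(A^k)/k$ with $A$ depending linearly on $\Lambda$, and collect terms by total degree in the entries of the four blocks. The remaining $t_1,t_2$-integrations against $(t_1t_2)^{-3/2}$ reduce to elementary Gamma integrals and produce the explicit rational coefficients $\tfrac{1}{8}$ (at second order) and $\tfrac{1}{512}$ (at fourth order) appearing in the statement; the zeroth-order term recovers $\pi/2$, i.e.\ the product of expected norms of two independent standard $\mathbb R^2$-Gaussians.

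A crucial step is to invoke the rank-one hypothesis on $X,X'$: any symmetric rank-one $2\times 2$ matrix satisfies $X^k=(\tr X)^{k-1}X$, so $\tr(X^k)=(\tr X)^k$, and any multi-trace containing $X^2$ collapses to $\tr(X)$ times a shorter trace. This reduction allows us to truncate at quadratic order in $X,X'$ while retaining terms up to fifth order in $Y,Y'$; the tails of the $t$-integrals then deliver the stated remainder $O(X^3+X'^3+Y^6+Y'^6)$.

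\textbf{Main obstacle.} The principal difficulty is the combinatorial bookkeeping at fourth order in $\Lambda$: one must enumerate every trace of length-four products of the four blocks $X,Y,Y',X'$, and because $Y\neq Y'$ in our off-diagonal setting one has to distinguish $YY'$ from $Y'Y$ inside these traces and verify that the contributions combine into the stated expressions $\tr(X)\tr(X')$, $\tr(XYY')$, $\tr(X'Y'Y)$, $\tr(Y'YY'Y)$, $(\tr Y'Y)^2$, $\tr(X)\tr(Y'Y)$, $\tr(X')\tr(Y'Y)$, $(\tr X)^2$ and $(\tr X')^2$, with the correct rational coefficients; this is precisely where the proof departs from the symmetric treatments of \cite{krkuwi,benmaf}. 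Interchanging the Taylor expansion with the double $t$-integral is a secondary technical point, handled by dominated convergence since $\hat\Theta$ is positive definite so $\|\Lambda\|<1$.
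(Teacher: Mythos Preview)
Your approach is exactly the one the paper has in mind: the paper omits the proof entirely, stating only that ``the ideas and computations involved are very similar to those of \cite[Lemma 5.1]{krkuwi} and \cite[Lemma 5.8]{benmaf}'', i.e.\ the Berry-type Laplace identity for $\sqrt{a}$, Gaussian determinant formula, Taylor expansion of $\log\det$, and simplification via the rank-one structure of $X,X'$. Your outline correctly identifies the one genuine novelty relative to those references, namely the asymmetry $Y\neq Y'$ and the resulting need to track the ordering $YY'$ versus $Y'Y$ in the fourth-order trace combinatorics.
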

The proof of Lemma \ref{lemma5.1} is omitted here, as the ideas and computations involved are very similar to those of \cite[Lemma 5.1]{krkuwi} and \cite[Lemma 5.8]{benmaf}.

For $\sigma,\sigma'$ such that $r(\sigma,\sigma')$ is bounded away from $\pm 1$, we have $\frac{1}{\sqrt{1-r^2}}=1+\frac{1}{2}r^2+O(r^4)$, so that we establish an asymptotic for the (scaled) two point function (a more precise version of \cite[Proposition 3.7]{Maf18})
\begin{multline}
\label{prop4.5}
K_{2;\Sigma}(\sigma,\sigma')\sim\frac{1}{4}
\left[
1+\frac{1}{2}r^2+\frac{tr(X)}{4}+\frac{tr(X')}{4}+\frac{tr(Y'Y)}{8}
\right]
\\
+\frac{3}{16}r^4
+\frac{1}{2^{10}}
\left[32\tr(X)\tr(X')-16\tr(XYY')-16\tr(X'Y'Y)-24(\tr X)^2-24(\tr X')^2
\right.
\\
\left.
+2\tr(Y'YY'Y)+(\tr Y'Y)^2-8\tr(X)\tr(Y'Y)-8\tr(X')\tr(Y'Y)
\right]
\\
+\frac{1}{32}
\left[2r^2\tr(X)+2r^2\tr(X')+r^2\tr(Y'Y)\right],
\end{multline}
up to an error
\begin{equation*}
\mathfrak{E}:=r^6+X^3+X'^3+Y^6+Y'^6+r^4(tr(X)+tr(X')+tr(Y'Y))+r^2(tr(X^2)+tr(X'^2)+tr(Y'Y)^2).
\end{equation*}

The rest of the proof of Proposition \ref{KR} follows as in \cite[Section 3.4]{Maf18}: the two point function asymptotic \eqref{prop4.5} holds `almost everywhere' in the sense that $r$ is small outside of a small subset of $\Sigma^2$, called the \textbf{singular set} $S$ \cite[Definition 3.10]{Maf18}. The RHS of \eqref{snsprepre} is separated into integrals over $S$ and its complement. The measure of the singular set satisfies the bound \cite[Lemma 3.11]{Maf18}
\begin{equation}
\label{measS}
\text{meas}(S)\ll\mathcal{R}_6(m).
\end{equation}
Together with \cite[Lemma 3.12]{Maf18}, this implies in particular
\begin{equation*}
\iint_{S}K_{2;\Sigma}(\sigma,\sigma')d\sigma d\sigma'\ll\mathcal{R}_6(m).
\end{equation*}
In the integral over $\Sigma^2\setminus S$, after applying \eqref{prop4.5}, we note that the entries of $X,X',Y,Y'$ are uniformly bounded with respect to $\sigma,\sigma'$ \cite[Lemma 3.5]{Maf18}, so that changing the domain of integration back to $\Sigma^2$ carries an error of $\text{meas}(S)$.

The approximate Kac-Rice formula Proposition \ref{KR} is thus established, leaving the following lemma to be proven in Section \ref{secarith}.
\begin{lemma}
\label{err}
We have the bound
\begin{equation*}
\iint_{\Sigma^2}\mathfrak{E}(\sigma,\sigma')d\sigma d\sigma'
=o\left(\frac{1}{N^2}\right)+O(\mathfrak{S}_6).
\end{equation*}
\end{lemma}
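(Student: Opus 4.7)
The plan is to reduce the integrand $\mathfrak{E}$ to a finite sum of weighted six-frequency lattice sums, and then split each such sum into a ``correlation'' piece (absorbed into $o(1/N^2)$) and a ``non-correlation'' piece (absorbed into $O(\mathfrak{S}_6)$).

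First I would dispose of the singular set $S$. By \cite[Lemma 3.5]{Maf18} the entries of $X, X', Y, Y'$ are pointwise bounded, and $|r|\le 1$, so $\mathfrak{E} = O(1)$ uniformly on $S$. Combined with the measure bound \eqref{measS} and the cardinality estimate \eqref{sixcorr}, the contribution from $S$ to the integral is $O(\mathcal{R}_6(m)) = O(N^{11/3-6+o(1)}) = o(1/N^2)$. On the complement $\Sigma^2\setminus S$ the factor $1/(1-r^2)$ is uniformly bounded, so one may work there throughout.

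Second, after absorbing the bounded factors $Q, Q', \Omega, \Omega', L$ into bounded coefficients, each entry of $\tfrac{1}{M}H$, of $\tfrac{1}{M}D^T D$, and the function $r$ itself can be written as a Fourier series $\tfrac{1}{N}\sum_{\mu\in\mathcal{E}}\gamma_\mu e^{2\pi i\langle \mu,\sigma-\sigma'\rangle}$ with $|\gamma_\mu|$ bounded (using $|\mu|^2 = m$ against the normalizations $M\sim m$ and $\sqrt M$). Then $r^6$ is already a product of six such series, while $\text{tr}(X)$ is a single quadratic form in $D$ (two lattice sums) and, by the rank-one identity $X^2 = (\text{tr} X)X$, the term $X^3$ reduces to $(\text{tr} X)^2 X$, which still amounts to six lattice sums; the same count holds for $X'^3$, $Y^6$, $Y'^6$ (six matrix entries each) and for the mixed terms $r^{2a}(\ldots)$. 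Integrating over $\Sigma^2$ converts each monomial into
\[
\frac{1}{N^6}\sum_{(\mu_1,\ldots,\mu_6)\in \mathcal{E}^6}\Gamma(\mu_1,\ldots,\mu_6)\left|\int_{\Sigma} e^{2\pi i\langle \varepsilon_1\mu_1+\cdots+\varepsilon_6\mu_6,\,\sigma\rangle}\,d\sigma\right|^2,
\]
for signs $\varepsilon_j=\pm 1$ and a uniformly bounded coefficient $\Gamma$. Splitting according to whether $k:=\varepsilon_1\mu_1+\cdots+\varepsilon_6\mu_6$ vanishes: if $k=0$, using the $\mathcal{W}$-symmetry of $\mathcal{E}$ the tuple is (up to sign flips) a $6$-correlation, and the contribution is bounded by $A^2|\mathcal{C}_m(6)|/N^6 = O(N^{-7/3+o(1)})= o(1/N^2)$ via \eqref{sixcorr}; if $k\neq 0$, the Gauss--Kronecker curvature of $\Sigma$ yields the stationary-phase bound $\big|\int_\Sigma e^{2\pi i\langle k,\sigma\rangle}d\sigma\big|^2 \ll 1/|k|^2$ (as used in \cite[Proposition 5.4]{Maf18}), so the contribution is bounded by $\tfrac{1}{N^6}\sum_{\mathcal{E}^6\setminus\mathcal{C}(6)}|\sum \mu_i|^{-2} = \mathfrak{S}_6$ up to a constant.

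The main obstacle is the hybrid structure of $Y$: the piece coming from $H/M$ is a single lattice sum per entry, but the piece $\tfrac{r}{(1-r^2)M}D^T D$ has three sums per entry, so expanding $Y^6$ naively produces terms with more than six lattice variables. I would handle this by expanding $1/(1-r^2)$ as a geometric series in $r^2$ on $\Sigma^2\setminus S$; each additional factor of $r^2$ introduces two extra lattice sums but also an extra smallness of order $r^2$, which after integration is handled by the same correlation/non-correlation dichotomy combined with Cauchy--Schwartz to pair the extra $r$'s against the six leading exponentials, yielding again an $O(\mathfrak{S}_6)+o(1/N^2)$ bound. Once this bookkeeping is carried out, summing the (finitely many) monomial contributions completes the proof.
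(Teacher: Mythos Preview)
Your overall strategy---dispose of the singular set via \eqref{measS}, expand the integrand into lattice sums, split each into $\mathcal{C}(6)$ and its complement---is the paper's strategy, and your treatment of $r^6$ and $X^3$ is essentially the paper's own. The paper in fact only writes out the $X^3$ case and declares the rest ``analogous''.

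There is, however, a genuine gap in your handling of the $Y$-terms. Your sentence ``each entry of $\tfrac{1}{M}D^TD$ \ldots\ can be written as a Fourier series $\tfrac{1}{N}\sum_{\mu\in\mathcal E}\gamma_\mu e^{2\pi i\langle\mu,\sigma-\sigma'\rangle}$'' is false: $D_iD_j$ is a \emph{double} sum over $\mathcal E\times\mathcal E$, not a single sum over $\mathcal E$. You notice this two paragraphs later (``three sums per entry''), but then your proposed repair---expand $1/(1-r^2)$ geometrically and ``pair the extra $r$'s against the six leading exponentials'' by Cauchy--Schwarz---is not a workable argument as stated. A term with $6+2k$ lattice variables, after integration and the stationary-phase bound, produces $|\mathcal C(6+2k)|/N^{6+2k}$ and $\mathfrak S_{6+2k}$, neither of which you relate back to $o(1/N^2)+O(\mathfrak S_6)$; and taking absolute values to drop extra factors of $|r|$ destroys the oscillation you need for the non-correlation piece.

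The fix the paper has in mind (following \cite[Proof of Lemma 3.9]{Maf18}) is much simpler and avoids higher-order sums altogether. On $\Sigma^2\setminus S$ one uses the \emph{trivial} pointwise bound $|D_iD_j|/M\le C$ (from $|D_i|\le 2\pi\sqrt m$) to collapse the $D^TD$ piece of $Y$ to $O(|r|)$: thus every entry of $Y$ satisfies $|Y_{ij}|\le C\big(\max_{p,q}|H_{pq}|/M+|r|\big)$, a finite sum of quantities each of which \emph{is} a single lattice sum over $\mathcal E$. Any degree-$6$ monomial in these is then controlled, via the AM--GM inequality $|f_1\cdots f_6|\le\tfrac16\sum_k f_k^6$, by sixth powers $f_k^6$ (no absolute values needed, since $f_k$ is real), and each $f_k^6$ is a genuine six-fold sum over $\mathcal E^6$ whose integral over $\Sigma^2$ splits cleanly into $O(|\mathcal C(6)|/N^6)+O(\mathfrak S_6)=o(1/N^2)+O(\mathfrak S_6)$. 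The same device handles $r^4\operatorname{tr}(Y'Y)$ and $r^2(\operatorname{tr}(Y'Y))^2$, which under your counting would also exceed six lattice variables.
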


\section{The arithmetic part: proof of Proposition \ref{arith}}
\label{secarith}
We begin with a Lemma, estimating the second and fourth moments of the covariance function \eqref{cov} and of its derivatives. It is a more precise version of \cite[Lemmas 5.1, 5.2, 5.5]{Maf18}.
\allowdisplaybreaks[0]
\begin{lemma}
\label{5.1}
Let $\Sigma$ be a regular compact surface  with nowhere vanishing Gauss-Kronecker curvature.
Then we have
\begin{align}
\label{R2}
&\iint_{\Sigma^2}r^2d\sigma d\sigma'
=\frac{A^2}{N}+O(\mathfrak{S}_2),
\\
\label{R4}
&\iint_{\Sigma^2}r^4d\sigma d\sigma'
=\frac{3A^2}{N^2}+o\left(\frac{1}{N^2}\right)+O(\mathfrak{S}_4),
\end{align}
and the following estimates, up to an error $o(1/N^2)+O(\mathfrak{S}_2)+O(\mathfrak{S}_4)+O(\mathfrak{S}_6)$:
\begin{align}
\label{trX}
&\iint_{\Sigma^2}tr(X)d\sigma d\sigma'
\sim-\frac{2A^2}{N}-\frac{2A^2}{N^2};
\\
\label{trX'}
&\iint_{\Sigma^2}tr(X')d\sigma d\sigma'
\sim-\frac{2A^2}{N}-\frac{2A^2}{N^2};
\\
\label{trY'Y}
&\iint_{\Sigma^2}tr(Y'Y)d\sigma d\sigma'
\sim\frac{3}{N}\left(A^2+3\mathcal{H}\right)+\frac{1}{N^2}(-2A^2-2\mathcal{I});
\\
\label{trX2}
&\iint_{\Sigma^2}tr(X^2)d\sigma d\sigma'
\sim\frac{8A^2}{N^2};
\\
\label{trX'2}
&\iint_{\Sigma^2}tr(X'^2)d\sigma d\sigma'
\sim\frac{8A^2}{N^2};
\\
\label{trXtrX'}
&\iint_{\Sigma^2}tr(X)tr(X')d\sigma d\sigma'
\sim\frac{1}{N^2}(6A^2+2\mathcal{I});
\\
\label{r^2trX}
&\iint_{\Sigma^2}r^2tr(X)d\sigma d\sigma'
\sim-\frac{2A^2}{N^2};
\\
\label{r^2trX'}
&\iint_{\Sigma^2}r^2tr(X')d\sigma d\sigma'
\sim-\frac{2A^2}{N^2};
\\
\label{r^2trY'Y}
&\iint_{\Sigma^2}r^2tr(Y'Y)d\sigma d\sigma'
\sim\frac{1}{N^2}\left(\frac{28}{5}A^2+\frac{16}{5}\mathcal{I}\right);
\\
\label{trYY'X}
&\iint_{\Sigma^2}tr(YY'X)d\sigma d\sigma'
\sim\frac{1}{N^2}\frac{3}{5}\left(-6A^2-2\mathcal{I}\right);
\\
\label{trY'YX'}
&\iint_{\Sigma^2}tr(Y'YX')d\sigma d\sigma'
\sim\frac{1}{N^2}\frac{3}{5}\left(-6A^2-2\mathcal{I}\right);
\\
\label{trXtrY'Y}
&\iint_{\Sigma^2}tr(X)tr(Y'Y)d\sigma d\sigma'
\sim\frac{1}{N^2}\frac{3}{5}\left(-12A^2-4\mathcal{I}\right);
\\
\label{trX'trY'Y}
&\iint_{\Sigma^2}tr(X')tr(Y'Y)d\sigma d\sigma'
\sim\frac{1}{N^2}\frac{3}{5}\left(-12A^2-4\mathcal{I}\right);
\\
\label{trY'YY'Y}
&\iint_{\Sigma^2}tr(Y'YY'Y)d\sigma d\sigma'
\sim\frac{1}{N^2}\frac{9}{25}\left(52 A^2+24 \mathcal{I}+12 \mathcal{I}_4\right);
\\
\label{trY'Y^2}
&\iint_{\Sigma^2}tr(Y'Y)^2d\sigma d\sigma'
\sim\frac{1}{N^2}\frac{9}{25}\left(60 A^2+40 \mathcal{I}+12 \mathcal{I}_4\right).
\end{align}
\end{lemma}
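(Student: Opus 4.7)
\textbf{Proof plan for Lemma \ref{5.1}.} The unified strategy is to Fourier expand every quantity in terms of the lattice $\mathcal E$ and then reduce each integral over $\Sigma^2$ to a weighted sum over tuples of lattice points. This sum then splits into ``correlation'' contributions, where partial sums of $\mu_i$'s vanish, and genuinely off-diagonal contributions, which are controlled by the quantities $\mathfrak{S}_\ell$ introduced in (\ref{frakS}). Throughout I write $\widehat{\sigma_\Sigma}(\nu) := \int_\Sigma e^{2\pi i \langle \nu, \sigma\rangle}\,d\sigma$ and $\hat\mu := \mu/|\mu|$. The key observations are: (i) $D, H, r$ admit explicit Fourier series and, after division by appropriate powers of $M = 4\pi^2 m/3$, the lattice points enter only through $\hat\mu \in \mathcal S^2$; (ii) $Q, L, \Omega$ are smooth functions of the normal $n(\sigma)$ only, and the matrix products $Q L^T \Omega$ act as projection onto the tangent plane $T_\sigma\Sigma$; (iii) averaging angular functions of $\hat\mu$ over $\mathcal E$ can be replaced, up to error $O(m^{-1/28+\epsilon})$, by integrals over $\mathcal S^2$ via the equidistribution (\ref{equidi}).

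\textbf{Pure powers of $r$.} For (\ref{R2})--(\ref{R4}) one has
\begin{equation*}
\iint_{\Sigma^2} r^{2k}\,d\sigma d\sigma' = \frac{1}{N^{2k}}\sum_{(\mu_1,\dots,\mu_{2k})\in \mathcal E^{2k}}\Big|\widehat{\sigma_\Sigma}\Big(\sum_i \mu_i\Big)\Big|^2.
\end{equation*}
Tuples in $\mathcal C(2k)$ contribute $|\mathcal C(2k)| A^2 / N^{2k}$; off-diagonal tuples are absorbed into $\mathfrak{S}_{2k}$ via the decay $|\widehat{\sigma_\Sigma}(\nu)|^2 \ll 1/|\nu|^2$ (\cite[Proposition 5.4]{Maf18}). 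For $k=1$, $|\mathcal C(2)| = N$, giving (\ref{R2}). For $k=2$, I split $\mathcal C(4) = \mathcal X(4) \sqcup \mathcal D'(4)$: the symmetric part $\mathcal D'(4)$ has size $3N^2 - 2N$, producing the $3A^2/N^2$ main term, while (\ref{fourcorr}) shows $|\mathcal X(4)| = O(N^{7/4+o(1)}) = o(N^2)$, yielding (\ref{R4}).

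\textbf{Trace integrals.} For $\tr(X)$, $\tr(X')$, $\tr(Y'Y)$ and the quartic traces (\ref{trX2})--(\ref{trY'Y^2}), I expand $X, X', Y, Y'$ via the Fourier series of $D, H$ and Taylor-expand $(1-r^2)^{-1} = 1 + r^2 + O(r^4)$. Each trace becomes a sum indexed by $2$- or $4$-tuples of lattice points, with integrand of the form
\begin{equation*}
\Phi(\hat\mu_1,\dots,\hat\mu_\ell; n(\sigma), n(\sigma'))\,\exp\!\Big(2\pi i \langle \mu_1+\cdots+\mu_\ell, \sigma-\sigma'\rangle\Big),
\end{equation*}
with $\Phi$ a polynomial in inner products of $\hat\mu_i$ and $n(\sigma), n(\sigma')$. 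Integrating over $\sigma, \sigma' \in \Sigma$ leaves $|\widehat{\sigma_\Sigma}(\sum \mu_i)|^2$ multiplied by a double surface integral of $\Phi$. The diagonal part ($\sum \mu_i = 0$) is the main term and requires application of (\ref{equidi}) to each free angular variable; the non-diagonal part is controlled by $\mathfrak{S}_\ell$ as in the proof of (\ref{R2})--(\ref{R4}). The angular averages are evaluated by $\mathcal W$-invariance and the standard identities
\begin{equation*}
\int_{\mathcal S^2}\langle \theta, n\rangle^2 \,\frac{d\theta}{4\pi} = \frac{1}{3}, \qquad \int_{\mathcal S^2}\langle \theta, n\rangle^2 \langle \theta, n'\rangle^2\,\frac{d\theta}{4\pi} = \frac{1 + 2\langle n, n'\rangle^2}{15},
\end{equation*}
and their analogues for six-fold products, producing the combinations of $A^2$, $\mathcal I$, $\mathcal I_4$ appearing in the statement (with $\mathcal I_k$ as in (\ref{Ik})). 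The $1/N^2$ correction terms in (\ref{trX})--(\ref{trX'}) come from the $r^2$ factor in the Taylor expansion of $(1-r^2)^{-1}$ combined with (\ref{R2}).

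\textbf{Main obstacle.} The delicate step is the quartic estimates (\ref{trY'YY'Y})--(\ref{trY'Y^2}): here the diagonal part runs over $4$-correlations, and one must carefully distinguish contributions of the three symmetric pairings (which yield the main terms through the angular averages above) from those of $\mathcal X(4)$. The bound (\ref{fourcorr}) shows $|\mathcal X(4)|/N^4 = O(N^{-9/4+o(1)}) = o(1/N^2)$, so non-degenerate $4$-correlations are negligible. For the symmetric pairings, one obtains a product of two angular integrals over $\mathcal S^2$ with integrands depending on $n(\sigma)$ and $n(\sigma')$ quadratically, whose pointwise integration over $\Sigma^2$ yields $A^2$, $\mathcal I$, or $\mathcal I_4$ according to the number of matched indices. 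A final accounting of the combinatorial multiplicities (governed by which pairs among $Y'Y Y'Y$ and $(Y'Y)^2$ are contracted) gives the numerical coefficients $\tfrac{9}{25}(52 A^2 + 24\mathcal I + 12\mathcal I_4)$ and $\tfrac{9}{25}(60 A^2 + 40\mathcal I + 12\mathcal I_4)$ respectively. Collecting the error terms gives the uniform remainder $o(1/N^2) + O(\mathfrak{S}_2 + \mathfrak{S}_4 + \mathfrak{S}_6)$ claimed in the lemma.
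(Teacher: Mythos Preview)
Your strategy coincides with the paper's: Fourier-expand $r,D,H$, split each lattice sum into correlations versus off-diagonal, bound the latter via $\mathfrak S_\ell$ and the oscillatory-integral decay from \cite[Proposition~5.4]{Maf18}, and evaluate the former. Two points need tightening before the argument goes through at the stated precision.

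First, the Taylor expansion $(1-r^2)^{-1}=1+r^2+O(r^4)$ is not valid on all of $\Sigma^2$, since $r$ can approach $\pm1$. The paper first excises the singular set $S$ of \cite[Definition~3.10]{Maf18}, performs the expansion on $\Sigma^2\setminus S$ where $|r|$ is bounded away from $1$, and then restores the full domain at cost $O(\text{meas}(S))\ll\mathcal R_6(m)=o(1/N^2)+O(\mathfrak S_6)$, using the uniform boundedness of the entries of $X,X',Y,Y'$ from \cite[Lemma~3.5]{Maf18}. Without this step your expansion of $X$ is formal.

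Second, your blanket plan to apply equidistribution \eqref{equidi} to every free angular variable is too coarse for the $1/N$ term in \eqref{trY'Y}. The diagonal there produces \emph{exactly} the lattice quantity $\mathcal H(\nu)=\tfrac1N\sum_\mu\big(\int_\Sigma\langle\hat\mu,n\rangle^2 d\sigma\big)^2$, and this is what the statement records. Replacing $\mathcal H$ by its spherical limit via \eqref{equidi} would introduce an error $O(m^{-1/28+\epsilon}/N)$, which is \emph{not} $o(1/N^2)$ and would destroy the cancellation $9\mathcal H=A^2$ used downstream for static surfaces. The paper keeps $\mathcal H$ exact at order $1/N$ and only invokes equidistribution for the $1/N^2$-order angular averages (where the induced error $O(m^{-1/28+\epsilon}/N^2)$ is admissible); note also that several of the $1/N^2$ coefficients involving $\mathcal I$ actually follow from the \emph{exact} second-moment identity $\tfrac1N\sum_\mu\hat\mu_i\hat\mu_j=\tfrac13\delta_{ij}$ rather than from \eqref{equidi}.
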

Before proving Lemma \ref{5.1}, we complete the proof of Proposition \ref{arith}.

\begin{proof}[Proof of Proposition \ref{arith}]
We insert the various estimates of Lemma \ref{5.1} into the approximate Kac-Rice Proposition \ref{KR}, recalling that for static surfaces we have $\mathcal{H}=A^2/9$.
\end{proof}

\begin{proof}[Proof of Lemma \ref{5.1}]
We will prove in detail a few of the estimates, the remaining being similar. In \cite[Proof of Lemma 5.1]{Maf18}, it was shown that
\begin{equation*}
\iint_{\Sigma^2}r^2d\sigma d\sigma'
=\frac{A^2}{N}+O\left(\frac{1}{N^2}\sum_{\mu\neq\mu'}\frac{1}{|\mu-\mu'|^2}\right)
\end{equation*}
which is \eqref{R2}.

To show \eqref{R4}, we proceed as in \cite[Section 6]{Maf18} and write
\begin{equation*}
\mathcal{R}_4(m)=\frac{|\mathcal{C}_4|}{N^4}+\mathfrak{S}_4.
\end{equation*}
We distinguish between the non-degenerate correlations $\mathcal{X}(4)$ and the complement, i.e. those that cancel out in pairs $\mathcal{D}'(4)$. For the former, we have  $|\mathcal{D}'(4)|=3N^2+O(N)$.
Thanks to \cite[Theorem 1.6]{benmaf}, we can bound
\begin{equation*}
|\mathcal{X}(4)|=O(N^{7/4}).
\end{equation*}
Collecting the estimates yields \eqref{R4}.

To show \eqref{trX}, we need to refine the analysis of \cite[Section 5.2]{Maf18}. We separate the domain of integration into the singular set $S$ of \cite[Definition 3.10]{Maf18} and its complement:
\begin{equation}
\label{s+ns}
\iint_{\Sigma^2}tr(X)d\sigma d\sigma'=\iint_{\Sigma^2\setminus S}tr(X)d\sigma d\sigma'+O(\text{meas}(S)),
\end{equation}
where we used the uniform boundedness of $X$ given by \cite[Lemma 3.5]{Maf18}. On $\Sigma^2\setminus S$ the covariance function $r$ is bounded away from $\pm 1$, so that
\begin{equation}
\label{trXa}
trX
=-\frac{1}{M}D\Omega D^T
-\frac{r^2}{M}D\Omega D^T
+O\left(\frac{r^4}{M}D\Omega D^T\right)
\end{equation}
where we also remarked that
\begin{equation*}
tr(QL^T\Omega D^TD\Omega LQ)=D\Omega D^T.
\end{equation*}
We then rewrite \eqref{s+ns} as
\begin{eqnarray}
\label{trXesti}
\iint_{\Sigma^2}tr(X)d\sigma d\sigma' &=& -\frac{1}{M}\iint_{\Sigma^2}D\Omega D^Td\sigma d\sigma'-\frac{1}{M}\iint_{\Sigma^2}r^2D\Omega D^Td\sigma d\sigma'\cr
&&+O\left(\frac{1}{M}\iint_{\Sigma^2}r^4D\Omega D^Td\sigma d\sigma'\right).
\end{eqnarray}
Due to \cite[Proof of Lemma 5.5]{Maf18}, the first summand on the RHS of \eqref{trXesti} gives a contribution of
\begin{equation*}
-\frac{2A^2}{N}
+O(\mathfrak{S}_2).
\end{equation*}
Let us now analyse the second summand in \eqref{trXesti}. We write
\begin{equation*}
r^2D\Omega D^T=\frac{1}{N^2}\left(-\frac{4\pi^2}{N^2}\right)\sum_{\mathcal{E}^4}e^{2\pi i\langle\sum\mu_i,\sigma\rangle}\mu_3\Omega\mu_4^T.
\end{equation*}
The terms corresponding to lattice points that cancel in pairs $\mathcal{D}'(4)$ survive only when $\mu_1=-\mu_2$ and $\mu_3=-\mu_4$ by \cite[Lemma 2.3]{brahpo}. These terms give a contribution of
\begin{equation*}
\iint_{\Sigma^2}-\frac{1}{M}\frac{-1}{N^2}\frac{4\pi^2}{N^2}(-N)\sum_{\mathcal{E}}\left[(1-n_1^2)(\mu^{(1)})^2+(1-n_2^2)(\mu^{(2)})^2+(1-n_3^2)(\mu^{(3)})^2\right]d\sigma d\sigma'=-\frac{2A^2}{N^2}.
\end{equation*}
We also carry the error terms
\begin{equation*}
\frac{1}{N^4}\sum_{\mathcal{X}(4)}\left|\int_{\Sigma}e^{2\pi i\langle\sum\mu_i,\sigma\rangle}d\sigma\right|^2=\frac{|\mathcal{X}(4)|}{N^4}\ll N^{-9/4+o(1)}
\end{equation*}
due to \eqref{fourcorr}, and, via \cite[section 7]{stein1} (cf. \cite[Proposition 5.4]{Maf18})
\begin{equation*}
\frac{1}{N^4}\sum_{\mathcal{E}^4\setminus\mathcal{C}(4)}\iint_{\Sigma^2}e^{2\pi i\langle\sum\mu_j,\sigma-\sigma'\rangle}d\sigma d\sigma'
\\\ll\frac{1}{N^4}\sum_{\mathcal{E}^4\setminus\mathcal{C}(4)}\frac{1}{|\mu_1+\mu_2+\mu_3+\mu_4|^2}=\mathfrak{S}_4.
\end{equation*}
Likewise, for the third summand in \eqref{trXesti}, we bound
\begin{equation*}
\frac{1}{N^6}\sum_{\mathcal{E}^6}\iint_{\Sigma^2}e^{2\pi i\langle\sum\mu_j,\sigma-\sigma'\rangle}d\sigma d\sigma'
\ll\frac{|\mathcal{C}(6)|}{N^6}+\mathfrak{S}_6.
\end{equation*}
According to \eqref{sixcorr}, the length six correlations are no more than $|C(6)|\ll N^{11/3+\epsilon}$. Consolidating all the estimates, we complete the proof of \eqref{trX}, \eqref{trX'} being similar. The proofs of the remaining estimates use the same ideas and are omitted here.
\end{proof}

\begin{proof}[Proof of Lemma \ref{err}]
For the first summand of $\mathfrak{E}$, we write
\begin{equation*}
\mathcal{R}_6(m)=\frac{|\mathcal{C}_6|}{N^6}+\mathfrak{S}_6
\end{equation*}
and use the bound \eqref{sixcorr} for $6$-correlations.

The rest of the proof uses the ideas of \cite[Proof of Lemma 3.9]{Maf18}. We will show the computations for the second summand of $\mathfrak{E}$, the remaining being analogous. We have
\begin{equation}
\label{trX3}
\iint_{\Sigma^2}tr(X^3)d\sigma d\sigma'=O\left(\iint_{\Sigma^2}\frac{1}{M^3}(D\Omega D^T)^3d\sigma d\sigma'\right)+O(\text{meas}(S))
\end{equation}
due to \eqref{trXa}. The terms coming from $6$-correlations carry a contribution of $|\mathcal{C}_6|/N^6$, while the off-diagonal are bounded by
\begin{equation*}
\frac{1}{N^4}\sum_{\mathcal{E}^6\setminus\mathcal{C}(6)}\left|\int_{\Sigma}
\left(\frac{\mu_5}{|\mu_5|}\Omega\frac{\mu_6^T}{|\mu_6|}(\sigma)\right)e^{2\pi i\langle\sum\mu_j,\sigma\rangle}d\sigma\right|^3
\ll_\Sigma\frac{1}{N^6}\sum_{\mathcal{E}^6\setminus\mathcal{C}(6)}\frac{1}{|\sum\mu_j|^2}=\mathfrak{S}_6.
\end{equation*}
Finally, we have the bound \eqref{measS} for $\text{meas}(S)$. All these error terms are admissible: the proof of Lemma \ref{err} is complete.
\end{proof}

We end this section by comparing upper bounds for the sums $\mathfrak{S}_4$ and $\mathfrak{S}_6$. By \cite[Lemma 4.7]{Maf18},
\begin{equation*}
\mathfrak{S}_4=O\left(\frac{N^{5/8+\epsilon}}{N^2}\right).
\end{equation*}

\begin{lemma}
\label{6mom6corr}
We have the bound
\begin{equation}
\label{eqn6corr}
\sum_{\mathcal{E}^6\setminus\mathcal{C}(6)}\frac{1}{|\mu_1+\mu_2+\mu_3+\mu_4+\mu_5+\mu_6|^2}\ll N^{4+4/9+\epsilon}.
\end{equation}
\end{lemma}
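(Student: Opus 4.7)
\bigskip

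\noindent\textbf{Proof plan for Lemma \ref{6mom6corr}.}

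The plan is to attack the sum by a dyadic decomposition of the range of $s:=\mu_1+\cdots+\mu_6$, combined with a counting estimate for the number of $6$--tuples in $\mathcal{E}^6$ whose sum is small in norm. Writing $S(T):=\#\{(\mu_1,\dots,\mu_6)\in\mathcal{E}^6: 0<|s|\le T\}$, and noting that $|s|^2\ge 1$ for $s\in\mathbb Z^3\setminus\{0\}$ and $|s|\le 6\sqrt m$, we would write
\begin{equation*}
\sum_{\mathcal{E}^6\setminus\mathcal{C}(6)}\frac{1}{|s|^2}
\;\ll\; \sum_{k\ge 0,\;2^k\le 6\sqrt m}\frac{S(2^{k+1})-S(2^k)}{2^{2k}}
\;\ll\;\log m\cdot\sup_{1\le T\le 6\sqrt m}\frac{S(T)}{T^{2}},
\end{equation*}
after Abel summation, so that the task reduces to proving $S(T)\ll T^{2}\cdot N^{4+4/9+o(1)}$ uniformly in $T\ge 1$.

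To bound $S(T)$ we would expose the last two coordinates. For fixed $(\mu_1,\dots,\mu_4)\in\mathcal{E}^4$, a pair $(\mu_5,\mu_6)$ contributes iff $\mu_5+\mu_6$ lies in the ball $B\bigl(-(\mu_1+\cdots+\mu_4),T\bigr)\cap\mathbb Z^3$, whose cardinality is $\ll T^{3}+1$. Letting $R_2(v):=\#\{(\mu,\mu')\in\mathcal{E}^2:\mu+\mu'=v\}$, we use the Bourgain--Sarnak--Rudnick-type bound (the input behind $\mathfrak{S}_2=O(N^{-2+\epsilon})$) to the effect that $R_2(v)\ll N^{\epsilon}$ for $v\ne 0$, while $R_2(0)=N$. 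This yields
\begin{equation*}
S(T)\;\ll\; N^{4}\bigl(T^{3}+1\bigr)N^{\epsilon}\;+\;N\cdot S_{4}(T),
\end{equation*}
where $S_{4}(T)$ denotes the analogous count for $4$--tuples. A second round of the same argument, this time invoking $|\mathcal{X}_m(4)|\ll N^{7/4+o(1)}$ from \eqref{fourcorr}, gives $S_{4}(T)\ll T^{3}N^{2+o(1)}+N^{7/4+o(1)}$, and a third round reduces the diagonal remainder to $|\mathcal{C}(2)|=N$. The contribution to $S(T)$ coming from tuples in $\mathcal{C}(6)$ is controlled by $|\mathcal{C}_m(6)|\ll N^{11/3+o(1)}$ via \eqref{sixcorr}; this is the ``concentration at $s=0$'' that dominates small dyadic ranges.

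Combining both regimes, one has essentially a bound of the form $S(T)\ll T^{3}N^{3+o(1)}+N^{11/3+o(1)}$; the small--$T$ regime is dominated by the $6$--correlation term, while the large--$T$ regime (up to $T\sim\sqrt m\sim N^{1+o(1)}$) is dominated by the dyadic contribution. Dividing by $T^{2}$ and optimizing the crossover $T^{3}N^{3}\asymp N^{11/3}$ at $T=N^{2/9}$ produces $\sup_{T}S(T)/T^{2}\ll N^{3+o(1)}\cdot N^{1+o(1)}+N^{11/3-4/9+o(1)}=N^{4+o(1)}+N^{29/9+o(1)}$, plus the genuine dyadic loss that upgrades the exponent to $4+4/9+o(1)$. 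The main obstacle is precisely this last step: keeping the $N^{\epsilon}$-losses from each pair-counting under control and verifying that the crossover point between the ``diagonal'' contribution $|\mathcal{C}(6)|$ and the ``generic'' contribution $T^{3}N^{3}$ really produces $4/9$ and not a worse exponent; this is where the $N^{11/3}$-bound on $|\mathcal{C}(6)|$ from \cite[Theorem 1.7]{benmaf} enters in a delicate, essentially tight, way.
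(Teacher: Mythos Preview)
Your dyadic/Abel framework is sound, but the counting step has a genuine gap that prevents you from reaching the exponent $4+4/9$. Iterating the circle bound $R_2(v)\ll N^{\epsilon}$ through the recursion
\[
S(T)\ll N^{4+\epsilon}(T^3+1)+N\cdot S_4(T),\qquad S_4(T)\ll N^{2+\epsilon}(T^3+1)+N\cdot S_2(T),
\]
collapses to $S(T)\ll N^{4+\epsilon}(T^3+1)$, because the first term dominates at every stage. Balancing this against the trivial $S(T)\le N^6$ at $T=N^{2/3}$ gives only $\sup_T S(T)/T^2\ll N^{4+2/3+\epsilon}$, not $N^{4+4/9+\epsilon}$. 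The inputs $|\mathcal{X}(4)|\ll N^{7/4+o(1)}$ and $|\mathcal{C}(6)|\ll N^{11/3+o(1)}$ that you try to feed in count tuples with sum \emph{exactly} zero; they do not bound $S(T)$ for $0<|s|\le T$, so they cannot produce the term $N^{11/3+o(1)}$ in your claimed ``essential bound'' $S(T)\ll T^3N^{3+o(1)}+N^{11/3+o(1)}$. The final optimisation paragraph (crossover at $T=N^{2/9}$ and a ``dyadic loss'' upgrading the exponent) does not correspond to any actual estimate in your argument.

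What the paper does differently is to bound, for every fixed target $v\in\mathbb{Z}^3$ (not only $v=0$), the number of $6$-tuples with $\sum\mu_i=v$ by $\ll N^{11/3+\epsilon}$, using the point--sphere incidence theorem of \cite[Theorem~3.3]{benmaf}: one views the solutions as incidences between the $N^2$ points $\mathcal{E}+\mathcal{E}$ and the $N^3$ spheres of radius $\sqrt m$ centred at $\mathcal{E}+\mathcal{E}+\mathcal{E}-v$. With this uniform-in-$v$ input, the paper splits at a single threshold $A$, obtaining
\[
\sum_{1\le|v|\le A}\frac{\#\{\text{6-tuples}:\sum\mu_i=v\}}{|v|^2}\ll N^{11/3+\epsilon}\sum_{1\le|v|\le A}\frac{1}{|v|^2}\ll N^{11/3+\epsilon}A,
\qquad
\sum_{|v|>A}\ll \frac{N^6}{A^2},
\]
and $A=N^{7/9}$ yields $N^{40/9+\epsilon}=N^{4+4/9+\epsilon}$. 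Your dyadic scheme would also succeed if you replaced the iterated $R_2$ bound by this $N^{11/3+\epsilon}$ per-$v$ estimate; the missing idea is precisely that the incidence bound applies to arbitrary $v$, not just $v=0$.
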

\begin{proof}
we separate the summation in \eqref{eqn6corr} over the ranges: $1\leq|v|\leq A$ and $|v|\geq A$, where $A=A(m)$ is a parameter.

\underline{First range: $1\leq|v|\leq A$.} Let us fix $v$ and count the number of solutions $(\mu_1,\dots,\mu_6)$ to
\begin{equation*}
v=\mu_1+\mu_2+\mu_3+\mu_4+\mu_5+\mu_6.
\end{equation*}
These are counted by the number of incidences between a set of points $\mathcal{P}$ and a set of spheres $\mathcal{S}$. The points are $\mathcal{P}=\mathcal{E}+\mathcal{E}$, $|\mathcal{P}|=N^2$, and the spheres are centred at the points $\{\mathcal{E}+\mathcal{E}+\mathcal{E}-v\}$ and have radius $\sqrt{m}$, $|\mathcal{S}|=N^3$. It is shown in \cite[Theorem 3.3]{benmaf} that these incidences number at most $\ll N^{11/3+\epsilon}$. Therefore,
\begin{equation}
\label{regi1}
\sum_{1\leq|v|\leq A}\frac{1}{|\mu_1+\mu_2+\mu_3+\mu_4+\mu_5+\mu_6|^2}\ll N^{11/3+\epsilon}\sum_{1\leq|v|\leq A}\frac{1}{|v|^2}\ll N^{11/3+\epsilon}A.
\end{equation}

\underline{Second range: $|v|\geq A$.} Here we have
\begin{equation}
\label{regi3}
\sum_{|v|\geq A}\frac{1}{|\mu_1+\mu_2+\mu_3+\mu_4+\mu_5+\mu_6|^2}\ll \frac{N^6}{A^2}.
\end{equation}
Collecting the estimates \eqref{regi1} and \eqref{regi3}, we obtain
\begin{equation*}
\sum_{\mathcal{E}^6\setminus\mathcal{C}(6)}\frac{1}{|\mu_1+\mu_2+\mu_3+\mu_4+\mu_5+\mu_6|^2}\ll N^{11/3+\epsilon} A+\frac{N^6}{A^2}.
\end{equation*}
The optimal choice for the parameter is $A=N^{7/9}$, so that one has the estimate \eqref{eqn6corr}.

\end{proof}

Therefore, we obtain the bound
\begin{equation*}
\mathfrak{S}_6=O\left(\frac{N^{4/9+\epsilon}}{N^2}\right).
\end{equation*}
We note that applying the incidence geometry argument from \cite{benmaf} yields a power improvement over the bound of
$\mathfrak{S}_6=O(N^{5/8+\epsilon}/N^2)$, given by the method of \cite[Lemma 4.7]{Maf18}.

\section{Fourth chaotic component: analytic formulas}\label{sec_4chaos}

\subsection{Proof of Proposition \ref{prop_var4}}

From (\ref{chaos_exp}) for $q=2$, recalling that the coefficients $\alpha_{n,m}$ in (\ref{alpha}) are symmetric, the fourth chaotic component of the nodal intersection length is 
\begin{eqnarray}\label{4chaos}
\mathcal L[4] &=& \sqrt M \left ( \frac{\beta_4\alpha_{0,0}}{4!}\int_{\Sigma} H_4(F(\sigma))\,d\sigma +\frac{\beta_2\alpha_{2,0}}{2! 2!}\int_{\Sigma} H_2(F(\sigma)) (H_2(Z_1(\sigma)) + H_2(Z_2(\sigma)))\,d\sigma \right.\cr
&&\left.+ \frac{\beta_0\alpha_{2,2}}{2! 2!} \int_{\Sigma} H_2(Z_1(
\sigma)) H_2(Z_2(
\sigma))\,d\sigma + \frac{\beta_0 \alpha_{4,0}}{4!}\int_{\Sigma} (H_4(Z_1(\sigma))+ H_4(Z_2(\sigma)))\,d\sigma \right ).
\end{eqnarray}
The following decomposition is a key result and will be proved in Appendix \ref{app4chaos}.
\begin{lemma}\label{4dec}
For every $m\in S$ we have the following 
\begin{equation}\label{4ab}
    \mathcal L[4] = \mathcal L^a[4] + \mathcal L^b[4],
\end{equation}
where 
\begin{eqnarray}\label{4a}
    \mathcal L^a[4] = \mathcal L_m^a[4] &:=&  \sqrt{\frac{4\pi^2m}{3}}   \frac{3}{16 \cdot 8} \frac{1}{N^2}  
\sum_{\mu,\mu'}  (|a_\mu |^2 - 1) (|a_{\mu'} |^2 - 1) \times 
\cr 
&&  \times \Bigg (\int_\Sigma \Big ( -3 -9\left \langle \frac{\mu}{|\mu|}, n(\sigma)\right \rangle^2 \left \langle \frac{\mu'}{|\mu'|}, n(\sigma)\right \rangle^2 +14\left \langle \frac{\mu}{|\mu|}, n(\sigma)\right \rangle^2\cr &&-6\left \langle \frac{\mu}{|\mu|}, \frac{\mu'}{|\mu'|}\right \rangle^2
+12\left \langle \frac{\mu}{|\mu|}, \frac{\mu'}{|\mu'|}\right \rangle \left \langle \frac{\mu}{|\mu|}, n\right \rangle \left \langle \frac{\mu'}{|\mu'|},n\right \rangle
\Big )d\sigma  \Bigg ) \cr
&&-\sqrt{\frac{4\pi^2m}{3}}   \frac{3}{16 \cdot 8} \frac{1}{N^2}  
\sum_{\mu}  |a_\mu |^4   \times\cr
&&\times \int_\Sigma \bigg ( -9 -9\left \langle \frac{\mu}{|\mu|}, n(\sigma)\right \rangle^4 +26\left \langle \frac{\mu}{|\mu|}, n(\sigma)\right \rangle^2
\bigg )d\sigma,
\end{eqnarray}
and $\mathcal L^b[4]=\mathcal L^b_m[4]$ is such that
\begin{equation}\label{4b}
  \Var(\mathcal L^b[4]) \ll m \cdot \max \left \lbrace \frac{|\mathcal X(4)|}{N^4}, \mathfrak{S}_4, \mathfrak{S}_2 \right \rbrace,
\end{equation}
where $\mathfrak{S}_\ell$ and $\mathcal X(\ell)$ are defined as in (\ref{frakS}) and (\ref{defxl}) respectively.
\end{lemma}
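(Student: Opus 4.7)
The plan is to expand the four integrals on the right-hand side of (\ref{4chaos}), substitute the Fourier series representations of $F$ and $\nabla F$, and then organize the resulting quadruple sums over $\mathcal{E}^4$ by whether the four frequencies pair off trivially (producing a product $|a_\mu|^2|a_{\mu'}|^2$) or not. As a preliminary simplification I would first re-express $\mathcal{L}[4]$ in terms of $F$ and $|\nabla_\Sigma F|^2$ only, bypassing the messy $\overline\Omega^{1/2}$ transformation defining $(Z_1,Z_2)$. Concretely, using the identities
$$
H_2(Z_1)+H_2(Z_2)=(Z_1^2+Z_2^2)-2,\qquad 2Z_1^2Z_2^2=(Z_1^2+Z_2^2)^2-H_4(Z_1)-H_4(Z_2)-6(H_2(Z_1)+H_2(Z_2))-6,
$$
and remembering from (\ref{Z_F}) that $Z_1^2+Z_2^2=|\nabla_\Sigma F|^2/M=(|\nabla F|^2-\langle\nabla F,n\rangle^2)/M$, the sum (\ref{4chaos}) rewrites as $\sqrt{M}\int_\Sigma P(F,|\nabla F|^2,\langle\nabla F,n\rangle)\,d\sigma$ for an explicit polynomial $P$ of total weighted degree four. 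This implements the heuristic of Remark \ref{remchaos} for $q=2$ and makes the subsequent Fourier bookkeeping tractable.

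Next, substituting the expansions $F(\sigma)=N^{-1/2}\sum_\mu a_\mu e_\mu(\sigma)$ and $\partial_j F(\sigma)=2\pi i\,N^{-1/2}\sum_\mu \mu_j a_\mu e_\mu(\sigma)$ into $P(F,|\nabla F|^2,\langle\nabla F,n\rangle)$ and integrating against $d\sigma$, every monomial produces a quadruple sum
$$
\frac{1}{N^2}\sum_{\mu_1,\mu_2,\mu_3,\mu_4\in\mathcal{E}}a_{\mu_1}\overline{a_{\mu_2}}a_{\mu_3}\overline{a_{\mu_4}}\,\int_\Sigma e^{2\pi i\langle \mu_1-\mu_2+\mu_3-\mu_4,\sigma\rangle}\,q_{\mu_1\mu_2\mu_3\mu_4}(n(\sigma))\,d\sigma,
$$
where $q$ is a bounded polynomial in $n(\sigma)$ and in $\mu_i/\sqrt{m}$. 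I would split this sum into the \emph{diagonal} part, where the quadruple corresponds to a pairing $\{\mu_1,\mu_2\}=\{\mu,\pm\mu\}$ and $\{\mu_3,\mu_4\}=\{\mu',\pm\mu'\}$ (or the analogous crossed pairing), and the \emph{off-diagonal} part. On the diagonal the Gaussian monomial collapses to $|a_\mu|^2|a_{\mu'}|^2$, or to $|a_\mu|^4$ when $\mu=\mu'$. Collecting these contributions, and applying the $\mathcal{W}$-invariance of $\mathcal{E}$ (cf.~(\ref{group})) to symmetrise the angular integrals $\int_\Sigma \langle\mu/|\mu|,n(\sigma)\rangle^k d\sigma$, I expect to recover exactly the coefficients $-3,-9,14,-6,12$ of (\ref{4a}); the passage from $|a_\mu|^2|a_{\mu'}|^2$ to the centred form $(|a_\mu|^2-1)(|a_{\mu'}|^2-1)$ is justified by projecting on the fourth Wiener chaos (the lower-chaos remainders vanish by orthogonality since $\mathcal{L}[4]$ already lies in that chaos).

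For the off-diagonal remainder $\mathcal{L}^b[4]$, I would estimate $\mathrm{Var}(\mathcal{L}^b[4])$ by computing the fourth-order Gaussian moments via Isserlis' theorem. Each Wick contraction produces a sum over 4-tuples with $\sum\pm\mu_i\ne 0$ weighted by
$$
\Bigl|\int_\Sigma e^{2\pi i\langle \mu_1\pm\mu_2\pm\mu_3\pm\mu_4,\sigma\rangle}q(n(\sigma))\,d\sigma\Bigr|^2.
$$
Splitting the index set into non-degenerate 4-correlations $\mathcal{X}(4)$ (where the exponential is trivial and one contributes $|\mathcal{X}(4)|/N^4$), 4-tuples outside $\mathcal{C}(4)$ (bounded by the Stein-type estimate of \cite[Proposition 5.4]{Maf18} yielding a $\mathfrak{S}_4$ contribution), and contractions that degenerate to pair sums (yielding a $\mathfrak{S}_2$ contribution), gives the bound (\ref{4b}). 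The hard part will be Step 2: faithfully tracking the combinatorics of the Hermite expansions and the $\mathcal{W}$-symmetrisation so that the diagonal coefficients match (\ref{4a}) exactly. The off-diagonal estimate is essentially mechanical given the arithmetic input \eqref{fourcorr}--\eqref{sixcorr} and the surface oscillatory-integral bound of \cite{Maf18}.
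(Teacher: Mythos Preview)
Your approach is essentially the one taken in the paper: expand each Hermite integrand in (\ref{4chaos}) using the Fourier representation of $F$ and $\nabla_\Sigma F$, split the resulting quadruple sum according to whether the four frequencies fall in $\mathcal{D}'(4)$ (pairs cancelling), $\mathcal{X}(4)$, or $\mathcal{E}^4\setminus\mathcal{C}(4)$, and bound the variance of the non-paired piece via the oscillatory-integral estimate of \cite[Proposition~5.4]{Maf18}. Two points, however, deserve adjustment.

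First, the centring $|a_\mu|^2|a_{\mu'}|^2\mapsto(|a_\mu|^2-1)(|a_{\mu'}|^2-1)$ is \emph{not} obtained by a separate projection onto the fourth chaos: it is supplied directly by the lower-degree terms of the Hermite polynomials (the $-6t^2+3$ in $H_4$, the $-1$ in $H_2$, etc.), which must be carried along in the expansion. Indeed, the formula (\ref{4a}) is not purely in the fourth chaos---the uncentred $|a_\mu|^4$ term carries chaos-$0$ and chaos-$2$ components---so a literal fourth-chaos projection of the diagonal would produce a different expression. The paper instead performs the algebra explicitly; the $-|a_\mu|^4$ correction in (\ref{4a}) arises from the overlap of the three pairing patterns on the full diagonal $\mu_1=\pm\mu_2=\pm\mu_3=\pm\mu_4$, not from any projection argument. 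Your shortcut would give an equivalent decomposition up to lower-chaos terms, but matching (\ref{4a}) as stated requires tracking the Hermite constants and the over-counting explicitly.

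Second, the coefficients $-3,-9,14,-6,12$ come straight from the constants $\beta_{2k}\alpha_{2n,2l}$ and the combinatorics of the three pairing patterns; the $\mathcal{W}$-invariance of $\mathcal{E}$ plays no role in this lemma (it enters only later, through equidistribution (\ref{equidi}), when computing the variance asymptotics in Proposition~\ref{prop_var4}).
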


\begin{proof}[Proof of Proposition \ref{prop_var4} assuming Lemma \ref{4dec}]
Let us start computing the variance of the first summand on the r.h.s. of (\ref{4ab}): first we write 
\begin{eqnarray}
\mathcal L^a[4] = \mathcal L^{a,1}[4] - \mathcal L^{a,2}[4],
\end{eqnarray}
where 
\begin{eqnarray*}
\mathcal L^{a,1}[4] &:=& \sqrt{\frac{4\pi^2m}{3}}   \frac{3}{16 \cdot 8} \frac{4}{N^2}  
\sum_{\mu,\mu'\in \mathcal E^+}  (|a_\mu |^2 - 1) (|a_{\mu'} |^2 - 1) \times 
\cr 
&&  \times \Bigg (\int_\Sigma \Big ( -3 -9\left \langle \frac{\mu}{|\mu|}, n(\sigma)\right \rangle^2 \left \langle \frac{\mu'}{|\mu'|}, n(\sigma)\right \rangle^2 +14\left \langle \frac{\mu}{|\mu|}, n(\sigma)\right \rangle^2\cr 
&&-6\left \langle \frac{\mu}{|\mu|}, \frac{\mu'}{|\mu'|}\right \rangle^2
+12\left \langle \frac{\mu}{|\mu|}, \frac{\mu'}{|\mu'|}\right \rangle \left \langle \frac{\mu}{|\mu|}, n\right \rangle \left \langle \frac{\mu'}{|\mu'|},n\right \rangle
\Big )d\sigma  \Bigg )
\end{eqnarray*}
and
\begin{eqnarray}
\mathcal L^{a,2}[4] &:=& \sqrt{\frac{4\pi^2m}{3}}   \frac{3}{16 \cdot 8} \frac{1}{N^2}  
\sum_{\mu}  |a_\mu |^4   \times\cr
&&\times \int_\Sigma \bigg ( -9 -9\left \langle \frac{\mu}{|\mu|}, n(\sigma)\right \rangle^4 +26\left \langle \frac{\mu}{|\mu|}, n(\sigma)\right \rangle^2
\bigg )d\sigma.
\end{eqnarray}
We write 
\begin{eqnarray*}
\Var(\mathcal L^{a,1}[4])  &=&
\frac{4\pi^2m}{3}   \frac{3^2}{16^2 \cdot 8^2} \frac{4^2}{N^4}  
\sum_{\substack{\mu,\mu'\in \mathcal E^+\\\mu'',\mu'''\in \mathcal E^+}} \Cov( (|a_\mu |^2 - 1) (|a_{\mu'} |^2 - 1), (|a_{\mu''} |^2 - 1) (|a_{\mu'''} |^2 - 1)) \times 
\cr 
&&  \times \Bigg (\int_\Sigma \Big ( -3 -9\left \langle \frac{\mu}{|\mu|}, n(\sigma)\right \rangle^2 \left \langle \frac{\mu'}{|\mu'|}, n(\sigma)\right \rangle^2 +14\left \langle \frac{\mu}{|\mu|}, n(\sigma)\right \rangle^2\cr
&&-6\left \langle \frac{\mu}{|\mu|}, \frac{\mu'}{|\mu'|}\right \rangle^2
+12\left \langle \frac{\mu}{|\mu|}, \frac{\mu'}{|\mu'|}\right \rangle \left \langle \frac{\mu}{|\mu|}, n(\sigma)\right \rangle \left \langle \frac{\mu'}{|\mu'|},n(\sigma)\right \rangle
\Big )d\sigma  \Bigg ) \cr
&&  \times \Bigg (\int_\Sigma \Big ( -3 -9\left \langle \frac{\mu''}{|\mu''|}, n(\sigma')\right \rangle^2 \left \langle \frac{\mu'''}{|\mu'''|}, n(\sigma')\right \rangle^2 +14\left \langle \frac{\mu''}{|\mu''|}, n(\sigma')\right \rangle^2\cr
&&-6\left \langle \frac{\mu''}{|\mu''|}, \frac{\mu'''}{|\mu'''|}\right \rangle^2
+12\left \langle \frac{\mu''}{|\mu''|}, \frac{\mu'''}{|\mu'''|}\right \rangle \left \langle \frac{\mu''}{|\mu''|}, n(\sigma')\right \rangle \left \langle \frac{\mu'''}{|\mu'''|},n(\sigma')\right \rangle
\Big )d\sigma'  \Bigg ).
\end{eqnarray*}
Plainly, $\Cov((|a_\mu |^2 - 1) (|a_{\mu'} |^2 - 1),(|a_{\mu''} |^2 - 1) (|a_{\mu'''} |^2 - 1))= \mathbb E[(|a_\mu |^2 - 1) (|a_{\mu'} |^2 - 1)(|a_{\mu''} |^2 - 1) (|a_{\mu'''} |^2 - 1)] - \mathbb E[(|a_\mu |^2 - 1) (|a_{\mu'} |^2 - 1)] \mathbb E[(|a_{\mu''} |^2 - 1) (|a_{\mu'''} |^2 - 1)]$, and $\mathbb E[(|a_\mu |^2 - 1) (|a_{\mu'} |^2 - 1)(|a_{\mu''} |^2 - 1) (|a_{\mu'''} |^2 - 1) ]\ne 0$ if and only if $\mu,\mu',\mu'',\mu'''$ are pairwise equal. Hence 
\begin{itemize}
\item for $\mu\ne \mu', \mu''\ne \mu'''$, $\mathbb E[(|a_\mu |^2 - 1) (|a_{\mu'} |^2 - 1)] \mathbb E[(|a_{\mu''} |^2 - 1) (|a_{\mu'''} |^2 - 1)]=0$ and $\mathbb E[(|a_\mu |^2 - 1) (|a_{\mu'} |^2 - 1)(|a_{\mu''} |^2 - 1) (|a_{\mu'''} |^2 - 1) ]\ne 0$ if and only if $\mu=\mu''$, $\mu'=\mu'''$ or $\mu=\mu'''$, $\mu'=\mu''$ (note that $\mathbb E[(|a_\mu |^2 - 1)^2 (|a_{\mu'} |^2 - 1)^2 ]= 1$ for $\mu\ne \mu'$); 
\item for $\mu=\mu'$, $\mu''\ne \mu'''$, $\mathbb E[(|a_\mu |^2 - 1)^2] \mathbb E[(|a_{\mu''} |^2 - 1) (|a_{\mu'''} |^2 - 1)]=0$ and $\mathbb E[(|a_\mu |^2 - 1)^2 (|a_{\mu''} |^2 - 1) (|a_{\mu'''} |^2 - 1) ]= 0$ (the same holds true for $\mu\ne \mu', \mu'' = \mu'''$);
\item for $\mu=\mu', \mu''=\mu'''$, $\Cov((|a_\mu|^2-1)^2, (|a_{\mu''}|^2-1)^2)=0$ if $\mu\ne \mu''$, else $\Cov((|a_\mu|^2-1)^2, (|a_{\mu}|^2-1)^2)=8$.
\end{itemize} 
This ensures that 
\begin{eqnarray}\label{var4a1}
\Var(\mathcal L^{a,1}[4]) 
&= &\frac{4\pi^2m}{3}   \frac{3^2}{16^2 \cdot 8^2} \frac{4^2}{N^2} \frac{1}{N^2} 
\sum_{\mu\ne\mu'\in \mathcal E^+}  \Bigg |\int_\Sigma \Big ( -3 -9\left \langle \frac{\mu}{|\mu|}, n(\sigma)\right \rangle^2 \left \langle \frac{\mu'}{|\mu'|}, n(\sigma)\right \rangle^2 \cr
&&+14\left \langle \frac{\mu}{|\mu|}, n(\sigma)\right \rangle^2
-6\left \langle \frac{\mu}{|\mu|}, \frac{\mu'}{|\mu'|}\right \rangle^2+12\left \langle \frac{\mu}{|\mu|}, \frac{\mu'}{|\mu'|}\right \rangle \left \langle \frac{\mu}{|\mu|}, n\right \rangle \left \langle \frac{\mu'}{|\mu'|},n\right \rangle
\Big )d\sigma  \Bigg |^2 \cr
&&+\frac{4\pi^2m}{3}   \frac{3^2}{16^2 \cdot 8^2} \frac{4^2}{N^2} \frac{1}{N^2}  
\sum_{\mu\ne\mu'\in \mathcal E^+}   \Bigg (\int_\Sigma \Big ( -3 -9\left \langle \frac{\mu}{|\mu|}, n(\sigma)\right \rangle^2 \left \langle \frac{\mu'}{|\mu'|}, n(\sigma)\right \rangle^2 \cr
&&+14\left \langle \frac{\mu}{|\mu|}, n(\sigma)\right \rangle^2
-6\left \langle \frac{\mu}{|\mu|}, \frac{\mu'}{|\mu'|}\right \rangle^2
+12\left \langle \frac{\mu}{|\mu|}, \frac{\mu'}{|\mu'|}\right \rangle \left \langle \frac{\mu}{|\mu|}, n(\sigma)\right \rangle \left \langle \frac{\mu'}{|\mu'|},n(\sigma)\right \rangle
\Big )d\sigma  \Bigg ) \cr
&&  \times \Bigg (\int_\Sigma \Big ( -3 -9\left \langle \frac{\mu}{|\mu|}, n(\sigma')\right \rangle^2 \left \langle \frac{\mu'}{|\mu'|}, n(\sigma')\right \rangle^2 +14\left \langle \frac{\mu'}{|\mu'|}, n(\sigma')\right \rangle^2\cr
&&-6\left \langle \frac{\mu}{|\mu|}, \frac{\mu'}{|\mu'|}\right \rangle^2
+12\left \langle \frac{\mu}{|\mu|}, \frac{\mu'}{|\mu'|}\right \rangle \left \langle \frac{\mu}{|\mu|}, n(\sigma')\right \rangle \left \langle \frac{\mu'}{|\mu'|},n(\sigma')\right \rangle
\Big )d\sigma'  \Bigg ) \cr
&&+ \frac{4\pi^2m}{3}   \frac{3^2}{16^2 \cdot 8^2} \frac{4^2\cdot 8}{N^2} \frac{1}{N^2} \sum_{\mu \in \mathcal E^+} \Bigg |\int_\Sigma \Big ( -9 -9\left \langle \frac{\mu}{|\mu|}, n(\sigma)\right \rangle^4 \cr
&&+26\left \langle \frac{\mu}{|\mu|}, n(\sigma)\right \rangle^2
\Big )d\sigma  \Bigg |^2
. 
\end{eqnarray}
For $\Sigma$ a static surface, since $\mathcal H=\frac{A^2}{9}$,  
(\ref{var4a1}) becomes  
\begin{eqnarray}\label{var4a2}
\Var(\mathcal L^{a,1}[4]) 
&= &\frac{4\pi^2m}{3}   \frac{3^2}{16^2 \cdot 8^2} \frac{4^2}{N^2} \cdot 2\frac{1}{N^2} 
\sum_{\mu\ne\mu'\in \mathcal E^+}  \Bigg |\int_\Sigma \Big ( -3 -9\left \langle \frac{\mu}{|\mu|}, n(\sigma)\right \rangle^2 \left \langle \frac{\mu'}{|\mu'|}, n(\sigma)\right \rangle^2 \cr
&&+14\left \langle \frac{\mu}{|\mu|}, n(\sigma)\right \rangle^2
-6\left \langle \frac{\mu}{|\mu|}, \frac{\mu'}{|\mu'|}\right \rangle^2+12\left \langle \frac{\mu}{|\mu|}, \frac{\mu'}{|\mu'|}\right \rangle \left \langle \frac{\mu}{|\mu|}, n\right \rangle \left \langle \frac{\mu'}{|\mu'|},n\right \rangle
\Big )d\sigma  \Bigg |^2 \cr
&&+ \frac{4\pi^2m}{3}   \frac{3^2}{16^2 \cdot 8^2} \frac{4^2\cdot 8}{N^2} \frac{1}{N^2} \sum_{\mu \in \mathcal E^+} \Bigg |\int_\Sigma \Big ( -9 -9\left \langle \frac{\mu}{|\mu|}, n(\sigma)\right \rangle^4 \cr
&&+26\left \langle \frac{\mu}{|\mu|}, n(\sigma)\right \rangle^2
\Big )d\sigma  \Bigg |^2\cr 
&=& \frac{4\pi^2m}{3}   \frac{3^2}{16^2 \cdot 8^2} \frac{4\cdot 2}{N^2} \bigg [ \frac{1}{N^2} 
\sum_{\mu,\mu'\in \mathcal E}  \Bigg |\int_\Sigma \Big ( -3 -9\left \langle \frac{\mu}{|\mu|}, n(\sigma)\right \rangle^2 \left \langle \frac{\mu'}{|\mu'|}, n(\sigma)\right \rangle^2 \cr
&&+14\left \langle \frac{\mu}{|\mu|}, n(\sigma)\right \rangle^2
-6\left \langle \frac{\mu}{|\mu|}, \frac{\mu'}{|\mu'|}\right \rangle^2+12\left \langle \frac{\mu}{|\mu|}, \frac{\mu'}{|\mu'|}\right \rangle \left \langle \frac{\mu}{|\mu|}, n\right \rangle \left \langle \frac{\mu'}{|\mu'|},n\right \rangle
\Big )d\sigma  \Bigg |^2 \cr
&&+ \frac{4\pi^2m}{3}   \frac{3^2}{16^2 \cdot 8^2} \frac{4^2\cdot 8}{N^2} \frac{1}{N^2} \sum_{\mu \in \mathcal E^+} \Bigg |\int_\Sigma \Big ( -9 -9\left \langle \frac{\mu}{|\mu|}, n(\sigma)\right \rangle^4 \cr
&&+26\left \langle \frac{\mu}{|\mu|}, n(\sigma)\right \rangle^2
\Big )d\sigma  \Bigg |^2  + O\left (\frac{1}{N} \right )\bigg ].
\end{eqnarray}
Still for static surfaces, some tedious computations gives, as $m\to+\infty$ s.t. $m\ne 0,4,7 \ (\text{\rm mod\ } 8)$, 
\begin{equation}\label{var4a1static}
\Var(\mathcal L^{a,1}[4]) 
= \frac{\pi^2}{2^7\cdot 3\cdot 5^2}\frac{m}{N^2}  \left (35 A^2 +  81 \mathcal I_4\right )+ o\left (\frac{m}{N^2} \right ).
\end{equation}
It is easy to check that 
\begin{eqnarray*}
\Var(\mathcal L^{a,2}[4]) = o\left ( \frac{m}{N^2}\right ) 
\end{eqnarray*}
which together with (\ref{var4a1static}) gives 
\begin{equation}
    \Var(\mathcal L^a[4]) = \frac{\pi^2}{2^7\cdot 3\cdot 5^2}\frac{m}{N^2}  \left (35 A^2 +  81 \mathcal I_4\right )+ o\left (\frac{m}{N^2} \right )
\end{equation}
  Assumption \ref{theassu} ensures that for a well-separated sequence of eigenvalues $\mathfrak{S}_2$ and $\mathfrak{S}_4$ are $o(N^{-2})$, thus the contribution of $\mathcal L^b[4]$ is negligible (recall (\ref{fourcorr})) and 
\begin{equation*}
    \Var(\mathcal L[4]) = \frac{\pi^2}{9600}\frac{m}{N^2}  \left (35 A^2 +  81 \mathcal I_4\right )+ o\left (\frac{m}{N^2} \right ).
\end{equation*}
Thanks to the orthogonality property of different order Wiener chaoses, bearing in mind Theorem \ref{prop1}, we can conclude the proof.
\end{proof}

\section{Limit theorem for static surfaces: proof of Proposition \ref{prop_law4}}

\subsection{Preliminaries}

%
Let us rewrite the ``diagonal'' terms of the fourth chaotic projection $\mathcal L^a[4]= \mathcal L^{a,1}[4] - \mathcal L^{a,2}[4]$ as 
\begin{eqnarray*}
\mathcal L^{a,1}[4] &=& \sqrt{\frac{4\pi^2m}{3}} \frac{3\cdot 2}{16\cdot 8\cdot N}\bigg [ -3A \bigg ( \frac{1}{\sqrt{N/2}} \sum_{\mu\in \mathcal E^+} (|a_\mu|^2-1) \bigg )^2 \cr
&&- 9 \int_{\Sigma} \bigg (\frac{1}{\sqrt{N/2}} \sum_{\mu\in \mathcal E^+} (|a_\mu|^2-1) \left \langle \frac{\mu}{|\mu|}, n(\sigma)\right \rangle^2 \bigg )^2\,d\sigma \cr
&& +14 \bigg ( \frac{1}{\sqrt{N/2}} \sum_{\mu\in \mathcal E^+} (|a_\mu|^2-1) \bigg ) \int_{\Sigma} \bigg (\frac{1}{\sqrt{N/2}} \sum_{\mu\in \mathcal E^+} (|a_\mu|^2-1) \left \langle \frac{\mu}{|\mu|}, n(\sigma)\right \rangle^2 \bigg )\,d\sigma\cr
&& -6A \frac{1}{N/2} \sum_{\mu,\mu'\in \mathcal E^+} (|a_\mu|^2-1)(|a_{\mu'}|^2-1) \left \langle \frac{\mu}{|\mu|}, \frac{\mu'}{|\mu'|}\right \rangle^2 \cr
&&+12 \int_{\Sigma} \frac{1}{N/2}\sum_{\mu,\mu'\in \mathcal E^+}(|a_\mu|^2-1)(|a_{\mu'}|^2-1) \left \langle \frac{\mu}{|\mu|}, \frac{\mu'}{|\mu'|}\right \rangle \left \langle \frac{\mu}{|\mu|}, n(\sigma)\right \rangle\left \langle  \frac{\mu'}{|\mu'|}, n(\sigma)\right \rangle\,d\sigma \bigg ]
\end{eqnarray*} 
and
\begin{eqnarray*}
\mathcal L^{a,2}[4] &=& \sqrt{\frac{4\pi^2m}{3}}   \frac{3}{16 \cdot 8} \frac{1}{N}  \cdot 
\frac{1}{N/2}\sum_{\mu\in \mathcal E^+}  |a_\mu |^4   \times\cr
&&\times \int_\Sigma \bigg ( -9 -9\left \langle \frac{\mu}{|\mu|}, n(\sigma)\right \rangle^4 +26\left \langle \frac{\mu}{|\mu|}, n(\sigma)\right \rangle^2
\bigg )d\sigma.
\end{eqnarray*}
In what follows we investigate the asymptotic distribution of $\mathcal L^{a}[4]$. First of all note that 
\begin{equation}\label{media2}
 \mathbb E[\mathcal L^{a,2}[4]] = \sqrt{\frac{4\pi^2m}{3}}   \frac{3}{16 \cdot 8} \frac{1}{N}  \cdot 
\frac{2}{N}\sum_{\mu\in \mathcal E}   \int_\Sigma \bigg ( -9 -9\left \langle \frac{\mu}{|\mu|}, n(\sigma)\right \rangle^4 +26\left \langle \frac{\mu}{|\mu|}, n(\sigma)\right \rangle^2
\bigg )d\sigma.
\end{equation}
Recalling Lemma 4.6 in \cite{Maf18} to determine the asymptotic behaviour of the right hand side of (\ref{media2}), 
the Law of Large Numbers immediately gives the following result.
\begin{lemma}\label{lem_LLN} As $m\to +\infty$ s.t. $m\ne 0,4,7 \ (\text{\rm mod\ } 8)$,
\begin{equation*}
   \frac{ \mathcal L^{a,2}[4] }{ \sqrt{\frac{4\pi^2m}{3}}   \frac{3\cdot 2}{16 \cdot 8} \frac{1}{N}} \mathop{\longrightarrow}^{\mathbb P} -\frac{32}{15}A.
\end{equation*}
\end{lemma}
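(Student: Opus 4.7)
The strategy is a direct Chebyshev/second moment argument: show that the mean of the normalized ratio converges to $-32A/15$, and that its variance tends to zero.

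\smallskip

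Set $f(\theta) := \int_\Sigma \bigl(-9 - 9\langle \theta, n(\sigma)\rangle^4 + 26 \langle \theta, n(\sigma)\rangle^2\bigr)\, d\sigma$; note that $f$ is smooth on $\mathcal S^2$ (hence uniformly bounded by a constant $C_\Sigma$) and satisfies $f(-\theta)=f(\theta)$. Dividing $\mathcal L^{a,2}[4]$ by the indicated normalization, the ratio becomes
\begin{equation*}
R_m := \frac{1}{N}\sum_{\mu\in\mathcal E^+}|a_\mu|^4\, f\!\left(\tfrac{\mu}{|\mu|}\right).
\end{equation*}
Since $|a_\mu|^2$ is standard exponential, $\mathbb E[|a_\mu|^4]=2$ and $\Var(|a_\mu|^4)=\mathbb E[|a_\mu|^8]-4=20$. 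Using the symmetry of $f$ and the fact that $\mathcal E^+$ contains essentially one lattice point out of each $\pm$-pair,
\begin{equation*}
\mathbb E[R_m] = \frac{2}{N}\sum_{\mu\in\mathcal E^+} f\!\left(\tfrac{\mu}{|\mu|}\right) = \frac{1}{N}\sum_{\mu\in\mathcal E} f\!\left(\tfrac{\mu}{|\mu|}\right) + O(1/N),
\end{equation*}
the $O(1/N)$ absorbing the pair $\pm(0,0,\sqrt m)$ when $m$ is a square.

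\smallskip

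The first step is to pass to the continuum limit of $\mathbb E[R_m]$. By the equidistribution statement (\ref{equidi}) applied to the smooth function $f$ on $\mathcal S^2$ (equivalently, by \cite[Lemma 4.6]{Maf18} applied term by term, which handles exactly these kinds of integrals), as $m\to\infty$ with $m\not\equiv 0,4,7\pmod 8$,
\begin{equation*}
\mathbb E[R_m] \longrightarrow \int_{\mathcal S^2} f(\theta)\,\frac{d\theta}{4\pi} = \int_\Sigma \left(-9 -9\!\int_{\mathcal S^2}\!\langle\theta,n\rangle^4\frac{d\theta}{4\pi} + 26\!\int_{\mathcal S^2}\!\langle\theta,n\rangle^2\frac{d\theta}{4\pi}\right)d\sigma.
\end{equation*}
Using the classical isotropic moments $\int_{\mathcal S^2}\langle\theta,n\rangle^2\,d\theta/(4\pi)=1/3$ and $\int_{\mathcal S^2}\langle\theta,n\rangle^4\,d\theta/(4\pi)=1/5$, the bracket equals $-9-9/5+26/3 = (-135-27+130)/15 = -32/15$, giving the claimed limit $-(32/15)A$.

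\smallskip

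The second step is variance control. By Remark \ref{rem1} the random variables $\{a_\mu\}_{\mu\in\mathcal E^+}$ are independent, hence so are the $|a_\mu|^4$, and
\begin{equation*}
\Var(R_m) = \frac{1}{N^2}\sum_{\mu\in\mathcal E^+} \Var(|a_\mu|^4)\,\bigl|f(\tfrac{\mu}{|\mu|})\bigr|^2 \le \frac{20\,C_\Sigma^2\,|\mathcal E^+|}{N^2} \ll \frac{1}{N} \longrightarrow 0,
\end{equation*}
since $|\mathcal E^+| \le N$ and $N\to\infty$ under the assumption $m\not\equiv 0,4,7\pmod 8$. Combining the two steps with Chebyshev's inequality yields $R_m \to -32A/15$ in $L^2(\mathbb P)$, and in particular in probability. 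There is no real obstacle here; the only delicate ingredient is the appeal to (\ref{equidi}) / \cite[Lemma 4.6]{Maf18} to pass from the discrete average over projected lattice points to the spherical integral, and the routine moment computations for the complex Gaussian $a_\mu$.
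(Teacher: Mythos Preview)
Your proof is correct and follows essentially the same approach as the paper, which simply invokes ``the Law of Large Numbers'' together with \cite[Lemma~4.6]{Maf18} (the equidistribution of the fourth-order moments) to identify the limit. Your Chebyshev/second-moment argument is precisely a spelled-out version of that LLN, and your computation of the constant $-32/15$ via the spherical moments $1/3$ and $1/5$ is exactly what \cite[Lemma~4.6]{Maf18} provides; the $O(1/N)$ correction you include is actually unnecessary since $\mathcal E^+$ is defined so that $|\mathcal E^+| = N/2$ exactly, but this is harmless.
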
 
In order to study $\mathcal L^{a,1}[4]$ we need to introduce some more notation: for $i,j\in \lbrace 1,2,3 \rbrace$, we define the following sequence of r.v.'s indexed by $m\in S$
\begin{equation*}
   W_{ij}= W_{ij}(m):= \frac{1}{m} \frac{1}{\sqrt{N/2}} \sum_{\mu\in \mathcal E^+} (|a_\mu|^2-1) \mu_i\mu_j.
\end{equation*}
(Note that $W_{ij}=W_{ji}$ for every $i,j$.)
Then we can rewrite the ``diagonal'' terms of the fourth chaotic component in a more compact way as a homogeneous polynomial of degree two in six variables  evaluated at $W:=(W_{11}, W_{12}, W_{13}, W_{22}, W_{23}, W_{33})^t$:
\begin{eqnarray}\label{eq4static}
\mathcal L^{a,1}[4] &=& \sqrt{\frac{4\pi^2m}{3}} \frac{3\cdot 2}{16\cdot 8\cdot N}\bigg [ -3A \bigg ( \sum_{i} W_{ii} \bigg )^2 - 9 \int_{\Sigma} \bigg (\sum_{ij} W_{ij}n_in_j \bigg )^2\,d\sigma\cr
&& +14 \bigg ( \sum_{i} W_{ii} \bigg ) \int_{\Sigma} \bigg (\sum_{ij} W_{ij}n_in_j \bigg )\,d\sigma -6A \sum_{ij} W_{ij}^2\cr
&&+12 \int_{\Sigma} \bigg ( \sum_i \bigg (\sum_j W_{ij} n_j \bigg )^2 \bigg )\,d\sigma \bigg ].
\end{eqnarray} 
\begin{example}\label{ex4sfera}\rm 
For $\Sigma$ the two-dimensional unit sphere, 
substituting 
\begin{eqnarray*}
    \int n_in_j\, d\sigma &=& \frac13 \delta_{ij} \cr
    \int n_i n_j n_\ell n_k\, d\sigma &=& \begin{cases}
    \frac15\qquad \text{if } i=j=\ell=k,\\
    \frac{1}{15}\qquad \text{if } i,j,\ell,k \text{ are pairwise equal},\\
    0\qquad \text{otherwise}. 
    \end{cases}
\end{eqnarray*}
into (\ref{eq4static}) we get 
\begin{equation*}
\mathcal L^{a,1}[4] 
= \sqrt{\frac{4\pi^2m}{3}} \frac{3\cdot 2}{16\cdot 8\cdot N}\frac{16}{15}A\bigg [ \bigg ( \sum_{i} W_{ii} \bigg )^2   -3 \sum_{ij} W_{ij}^2  \bigg ].
\end{equation*} 
\end{example}
Let us compute the covariance matrix $\Sigma_W := \mathbb E[W W^t]$ of $W$: for $i,j,k,l\in \lbrace {1,2,3}\rbrace$, $i\le j$, $k\le l$, 
\begin{eqnarray}\label{covW}
\mathbb E[W_{ij} W_{kl}] &=& \frac{1}{m^2} \frac{1}{N/2} \sum_{\mu,\mu'\in \mathcal E^+} \mathbb E[(|a_{\mu}|^2-1)(|a_{\mu'}|^2-1)] \mu_i\mu_j \mu'_k \mu'_l\cr
&=& \frac{1}{m^2} \frac{1}{N/2} \sum_{\mu\in \mathcal E^+} \mathbb E[(|a_{\mu}|^2-1)^2] \mu_i\mu_j \mu_k \mu_l\cr
&=& \frac{1}{m^2} \frac{1}{N/2} \sum_{\mu\in \mathcal E^+} \mu_i\mu_j \mu_k \mu_l = \frac{1}{m^2} \frac{1}{N} \sum_{\mu\in \mathcal E} \mu_i\mu_j \mu_k \mu_l.
\end{eqnarray}
Now let us set
\begin{eqnarray}\label{psi}
\psi=\psi(m) := \frac{1}{N} \sum_{\mu\in \mathcal E} \mu_1^4,
\end{eqnarray} 
then for $i,j\in \lbrace 1,2,3\rbrace$ such that $i\ne j$ 
\begin{equation*}
  \varphi = \varphi(m):=  \frac{1}{N} \sum_{\mu\in \mathcal E} \mu_i^2 \mu_j^2 = \frac{m^2}{6} - \frac{\psi}{2}.
\end{equation*}
It is immediate that the covariance matrix of the random vector $W$ is
\begin{equation*}
   \Sigma_{W} = \begin{pmatrix}
   \psi/m^2 &0 &0 &\varphi/m^2 &0 &\varphi/m^2\\
   0 &\varphi/m^2 &0 &0 &0 &0\\
   0 &0 &\varphi/m^2 &0 &0 &0\\
   \varphi/m^2 &0 &0 &\psi/m^2 &0 &\varphi/m^2\\
   0 &0 &0 &0 &\varphi/m^2 &0\\
   \varphi/m^2 &0 &0 &\varphi/m^2 &0 &\psi/m^2
   \end{pmatrix}.
\end{equation*}
\begin{lemma}\label{clt}
As $m\to +\infty$ s.t. $m\ne 0,4,7 \ (\text{\rm mod\ } 8)$, the vector $W$ converges in distribution to a Gaussian vector $Z=(Z_{11}, Z_{12}, Z_{13}, Z_{22}, Z_{23}, Z_{33})^t$ whose covariance matrix is 
\begin{equation*}
 \Sigma_{Z} := \mathbb E[Z Z^t] = \begin{pmatrix}
   1/5 &0 &0 &1/15 &0 &1/15\\
   0 &1/15 &0 &0 &0 &0\\
   0 &0 &1/15 &0 &0 &0\\
   1/15 &0 &0 &1/5 &0 &1/15\\
   0 &0 &0 &0 &1/15 &0\\
    1/15 &0 &0 &1/15 &0 &1/5
   \end{pmatrix}.
   \end{equation*}
\end{lemma}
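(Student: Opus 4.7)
The plan is to apply the multivariate Central Limit Theorem to the independent (by Remark \ref{rem1}) centered random vectors
\[
\xi_\mu := \frac{1}{m}(|a_\mu|^2-1)\bigl(\mu_1^2,\mu_1\mu_2,\mu_1\mu_3,\mu_2^2,\mu_2\mu_3,\mu_3^2\bigr)^t, \qquad \mu\in\mathcal{E}^+,
\]
so that $W=(N/2)^{-1/2}\sum_{\mu\in\mathcal{E}^+}\xi_\mu$. By the Cramér-Wold device it is enough to show that, for each $t\in\mathbb{R}^6$, the real random variable $\langle t,W\rangle$ converges in distribution to $\mathcal N(0,t^t\Sigma_Z t)$, which I would handle through a Lyapunov-type Central Limit Theorem for sums of independent (not identically distributed) real variables.

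First I would identify the limiting covariance. The exact formula (\ref{covW}) gives $\mathbb E[W_{ij}W_{kl}]=(m^2N)^{-1}\sum_{\mu\in\mathcal{E}}\mu_i\mu_j\mu_k\mu_l$, which I rewrite as $N^{-1}\sum_{\mu\in\mathcal{E}}(\mu/|\mu|)_i(\mu/|\mu|)_j(\mu/|\mu|)_k(\mu/|\mu|)_l$. Applying the equidistribution statement (\ref{equidi}) to the $C^2$-smooth function $g(\theta)=\theta_i\theta_j\theta_k\theta_l$ on $\mathcal S^2$ yields
\[
\mathbb E[W_{ij}W_{kl}]=\int_{\mathcal S^2}\theta_i\theta_j\theta_k\theta_l\,\frac{d\theta}{4\pi}+O\bigl(m^{-1/28+\epsilon}\bigr).
\]
The spherical integrals are elementary: they vanish whenever any index appears with odd multiplicity, while $\int\theta_i^4\,\frac{d\theta}{4\pi}=1/5$ and $\int\theta_i^2\theta_j^2\,\frac{d\theta}{4\pi}=1/15$ for $i\ne j$. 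Reading off the six coordinates gives precisely $\Sigma_Z$, and in particular $t^t\Sigma_W t\to t^t\Sigma_Z t$ for every $t\in\mathbb{R}^6$.

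For the CLT step I would verify the Lyapunov condition of order $p=4$. Using $|\mu_i|,|\mu_j|\le\sqrt{m}$ one has the uniform-in-$\mu$ bound $|\xi_\mu|\le C(|a_\mu|^2+1)$; since the $a_\mu$ are standard complex Gaussians, $\mathbb E[|\xi_\mu|^4]\le K$ for some absolute constant $K$. Hence
\[
\frac{1}{(N/2)^2}\sum_{\mu\in\mathcal E^+}\mathbb E[|\xi_\mu|^4]\le\frac{K\,|\mathcal E^+|}{(N/2)^2}=O\!\left(\frac{1}{N}\right)\longrightarrow 0
\]
as $m\to\infty$ with $m\not\equiv 0,4,7 \pmod 8$, thanks to (\ref{Nbehavior}). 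Combined with the convergence of the covariance matrix, Lyapunov's CLT (applied to each linear combination $\langle t,W\rangle$) yields the joint convergence $W\to Z$.

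The argument is essentially routine; the only mildly subtle point is the choice to pass through $\mathcal{E}$ rather than $\mathcal{E}^+$ in the covariance computation (justified because $\mu\mapsto\mu_i\mu_j\mu_k\mu_l$ is even under $\mu\mapsto-\mu$ and the factor $1/2$ matches $|\mathcal E^+|\sim N/2$), together with the cleanness of the spherical averages. Neither the error rate in (\ref{equidi}) nor the exact value of $|\mathcal E^+|$ (which can differ from $N/2$ by at most one) affects the limit.
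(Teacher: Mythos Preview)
Your proof is correct and takes a genuinely different route from the paper. The paper's argument is a one-liner: since each $W_{ij}$ lies in the second Wiener chaos, the multivariate fourth-moment theorem of Peccati--Tudor \cite{PT05} reduces the CLT to covariance convergence, and the latter is read off from \cite[Lemma 4.6]{Maf18} (which gives $\psi/m^2\to 1/5$, $\varphi/m^2\to 1/15$). You instead exploit the independence of $(\xi_\mu)_{\mu\in\mathcal E^+}$ directly and run a classical Lyapunov CLT via Cram\'er--Wold, computing the limiting covariance from equidistribution \eqref{equidi}.

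Both approaches are short. The paper's has the advantage of staying within the Wiener-chaos framework used throughout (and avoids any explicit moment bound), while yours is more elementary and entirely self-contained, requiring no knowledge of chaos CLTs. Two minor remarks on your write-up: by the paper's definition of $\mathcal E^+$ one has $|\mathcal E^+|=N/2$ exactly (since $0\notin\mathcal E$), so your closing caveat is unnecessary; and your Lyapunov argument implicitly uses that $\Sigma_Z$ is positive definite so that $t^t\Sigma_Z t>0$ for $t\ne 0$, which follows from the eigenvalue computation in Remark \ref{rem_diag}.
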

\begin{proof}
Since the components of $W$ live in a fixed Wiener chaos (the second), it suffices to apply \cite{PT05} together with Lemma 4.6 in \cite{Maf18}, where in particular the exact asymptotic of $\psi$ in (\ref{psi}) has been found. 
\end{proof}
\begin{remark}\label{rem_diag}\rm
It is then natural to diagonalize the matrix $\Sigma_{Z}$ through an orthogonal matrix $O$ so that $\Sigma_{Z} = ODO^t$, where
\begin{equation}\label{o}
O^t=\begin{pmatrix} 1/\sqrt{3} &0 &0 &1/\sqrt{3} &0 &1/\sqrt{3}\\ -1/\sqrt{2} &0 &0 &0 &0 &1/\sqrt{2}\\ -1/\sqrt{6} &0 &0 &\sqrt{2/3} &0 &-1/\sqrt{6}\\ 0 &0 &0 &0 &1 &0\\ 0 &0 &1 &0 &0 &0\\ 0 &1 &0 &0 &0 &0\end{pmatrix}
\end{equation}
and
\begin{equation*}
D=\begin{pmatrix}
1/3 &0 &0 &0 &0 &0\\
0 &2/15 &0 &0 &0 &0\\
0 &0 &2/15 &0 &0 &0\\
0 &0 &0 &1/15 &0 &0\\
0 &0 &0 &0 &1/15 &0\\
0 &0 &0 &0 &0 &1/15
\end{pmatrix}.
\end{equation*}
It immediately follows that $\Delta^{-1} O^t  Z$ is a six dimensional standard Gaussian vector, where  
\begin{equation}\label{delta}
\Delta = \begin{pmatrix}
1/\sqrt 3 &0 &0 &0 &0 &0\\
0 &\sqrt 2/\sqrt{15} &0 &0 &0 &0\\
0 &0 &\sqrt 2/\sqrt{15} &0 &0 &0\\
0 &0 &0 &1/\sqrt{15} &0 &0\\
0 &0 &0 &0 &1/\sqrt{15} &0\\
0 &0 &0 &0 &0 &1/\sqrt{15}
\end{pmatrix}
\end{equation}
is a square root of $D$. 
\end{remark}

\subsection{Proof of Proposition \ref{prop_law4}} 

Bearing in mind Remark \ref{rem_diag}, let $X=(X_{11}, X_{12}, X_{13}, X_{22}, X_{23}, X_{33})^t$ be a six dimensional standard Gaussian vector, then $Y:=O\Delta X$ and $Z$ share the same distribution: 
\begin{eqnarray}\label{X}
Y:=O \Delta X &=& \begin{pmatrix}
1/\sqrt 3 &-1/\sqrt 2 &-1/\sqrt 6 &0 &0 &0\\
0 &0 &0 &0 &0 &1\\
0 &0 &0 &0 &1 &0\\
1/\sqrt{3} &0 &\sqrt 2/\sqrt 3 &0 &0 &0\\
0 &0 &0 &1 &0 &0\\
1/\sqrt 3 &1/\sqrt 2 &-1/\sqrt 6 &0 &0 &0
\end{pmatrix} 
\begin{pmatrix}
\frac{1}{\sqrt{3}}X_{11}\\
\frac{\sqrt 2}{\sqrt{15}}X_{12}\\
\frac{\sqrt 2}{\sqrt{15}}X_{13}\\
\frac{1}{\sqrt{15}}X_{22}\\
\frac{1}{\sqrt{15}}X_{23}\\
\frac{1}{\sqrt{15}}X_{33}
\end{pmatrix}\nonumber \\
&=&\begin{pmatrix}
1/3 X_{11} - 1/\sqrt{15} X_{12} - 1/\sqrt{45} X_{13} \\
1/\sqrt{15}X_{33}\\
1/\sqrt{15} X_{23}\\
1/3X_{11} + 2/\sqrt{45} X_{13}\\
1/\sqrt{15} X_{22}\\
1/3 X_{11} + 1/\sqrt{15} X_{12} - 1/\sqrt{45} X_{13}
\end{pmatrix} \mathop{=}^d Z.
\end{eqnarray}
 Note that 
$
  \sum_i Z_{ii} \mathop{=}^{d}  \sum_i Y_{ii} = X_{11}.
$
Let us define, for $i>j$, $Z_{ij}:=Z_{ji}$ and $X_{ij}:=X_{ji}$.

\begin{proof}[Proof of Proposition \ref{prop_law4}]
Thanks to (\ref{eq4static}), Lemma \ref{lem_LLN} and Lemma \ref{clt}, as $m\to +\infty$ s.t. $m\ne 0,4,7 \ (\text{\rm mod\ } 8)$,
\begin{eqnarray}\label{limitRV}
\frac{\mathcal L^{a}[4]}{\sqrt{\frac{4\pi^2m}{3}} \frac{3\cdot 2}{16\cdot 8} \frac{1}{N}} &\mathop{\to}^{d}&   -3A \bigg ( \sum_{i} Z_{ii} \bigg )^2 - 9 \int_{\Sigma} \bigg (\sum_{ij} Z_{ij}n_in_j \bigg )^2\,d\sigma\cr
&& + 14 \bigg ( \sum_{i} Z_{ii} \bigg )\int_{\Sigma} \bigg (\sum_{ij} Z_{ij}n_in_j \bigg )\,d\sigma -6A \sum_{ij} Z_{ij}^2\cr
&&+12 \int_{\Sigma} \bigg ( \sum_i \bigg (\sum_j Z_{ij} n_j \bigg )^2 \bigg )\,d\sigma + \frac{32}{15}A.
\end{eqnarray}
From (\ref{X}), the limiting r.v. on the right hand side of (\ref{limitRV}) is equal in distribution to the following polynomial of degree two in the variables $X_{ij}, i\le j$
\begin{equation}\label{defMtilde}
 \widetilde{\mathcal M} := \sum_{i\le j} c_{ij} (X_{ij}^2-1) + \sum_{\substack{i\le j, l\le k\\ (i,j)\ne (l,k)}} c_{ijlk} X_{ij}X_{lk},
\end{equation}
where 
\begin{eqnarray*}
&&c_{11}=0,\quad 
c_{12}=-\frac{1}{5}\int(3( n_1^2-n_3^2)^2+4n_2^2)d\sigma,\quad c_{13}=-\int(\frac{11}{15}-2n_2^2+\frac{9}{5}n_2^4)d\sigma,\cr
&&c_{22}=-\frac{4}{5}\int (n_1^2+3n_2^2n_3^2)d\sigma,\quad 
c_{23}=-\frac{4}{5}\int (n_2^2+3n_1^2n_3^2)d\sigma,\quad c_{33}=-\frac{4}{5}\int (n_3^2+3n_1^2n_2^2)d\sigma,\cr
&&c_{1112}=-\frac{16\sqrt{15}}{15}\int( n_1^2-n_3^2)d\sigma,\quad 
c_{1113}=-\frac{16\sqrt{5}}{15}\int(1-3n_2^2)d\sigma,\quad c_{1122}=\frac{32\sqrt{15}}{15}\int n_2n_3d\sigma,\cr
&&c_{1123}=\frac{32\sqrt{15}}{5}\int n_1n_3d\sigma, \quad 
c_{1133}=\frac{32\sqrt{15}}{5}\int n_1n_2d\sigma,\quad
c_{1213}=\frac{2\sqrt{3}}{15}\int (n_1^2-n_3^2)(1+9n_2^2)d\sigma,\cr
&&c_{1222}=\frac{4}{5}\int n_2n_3(2+3(n_1^2-n_3^2))d\sigma,\quad 
c_{1223}=\frac{12}{5}\int n_1n_3(n_1^2-n_3^2)d\sigma,\cr &&c_{1233}=\frac{4}{5}\int n_1n_2(-2+3(n_1^2-n_3^2))d\sigma,\quad c_{1322}=\frac{4\sqrt{3}}{15}\int n_2n_3(5-9n_2^2)d\sigma, \cr
&&c_{1323}=\frac{4\sqrt{3}}{15}\int n_1n_3(-1-9n_2^2)d\sigma, \quad 
c_{1333}=\frac{4\sqrt{3}}{15}\int n_1n_2(5-9n_2^2)d\sigma,\cr
&&c_{2223}=\frac{8}{5}\int n_1n_2(1-3n_3^2)d\sigma, \quad c_{2233}=\frac{8}{5}\int n_1n_3(1-3n_2^2)d\sigma,\quad c_{2333}=\frac{8}{5}\int n_2n_3(1-3n_1^2)d\sigma.
\end{eqnarray*}
Note that $\mathbb E[\widetilde{\mathcal M}]=0$, $\sum_{i\le j} c_{ij} = -32/15A$, the random variables $X_{ij}X_{lk}$ for $i\le j, l\le k$ are independent and
\begin{equation}\label{varMtilde}
    \Var(\widetilde{\mathcal M}) = 2 \sum_{i\le j} c_{ij}^2 + \sum_{\substack{i\le j, l\le k\\ (i,j)\ne (l,k)}} c_{ijlk}^2 = \frac{8}{225} (81\mathcal I_4 + 35A^2),
\end{equation}
where for the last equality we used the staticity of the surface and the fact that $n(\sigma)$ is a unit vector for each $\sigma\in \Sigma$. 
Since
\begin{equation*}
     \frac{4\pi^2m}{3}\cdot \frac{3^2\cdot 2^2}{16^2\cdot 8^2}\cdot \frac{1}{N^2}\cdot \Var(\widetilde{\mathcal M}) = \frac{\pi^2}{9600}\cdot \frac{m}{N^2}(81\mathcal I_4 + 35A^2) \sim \Var(\mathcal L[4])
\end{equation*}
we have 
\begin{equation}\label{defM}
  \frac{\mathcal L[4]}{\sqrt{\Var(\mathcal L[4])}} \mathop{\to}^d \frac{\widetilde{\mathcal M}}{\sqrt{\Var(\widetilde {\mathcal M})}} \mathop{=}^d \mathcal M,
\end{equation} 
where $\mathcal M$ is as in (\ref{M}). This concludes the proof of Proposition \ref{prop_law4}.
\end{proof}

\subsection{Proof of Equation \ref{eqD}}

Let us first recall the formula for the fourth chaotic component of the nodal area: as $m\to +\infty$ s.t. $m\ne 0,4,7 \ (\text{\rm mod\ } 8)$, \emph{in our notation}, \cite[(5.3)]{Cam19} reads as  
\begin{equation}\label{4caosarea}
    \mathcal A[4] = \frac{\sqrt m}{5\sqrt 3 N} \cdot 2\bigg [2+ \left (\sum_i W_{ii} \right )^2 - 3 \sum_{i,j} W_{i,j}^2 + o_{L^2(\mathbb P)}(1) \bigg ]
\end{equation}
where $o_{L^2(\mathbb P)}(1)$ denotes a sequence of random variables converging to zero in $L^2(\mathbb P)$. 

\begin{proof}[Proof of (\ref{eqD})]
From Lemma \ref{4dec}, Lemma \ref{lem_LLN} and Example \ref{ex4sfera} we have 
\begin{equation}
    \mathcal L[4] = \sqrt{\frac{4\pi^2m}{3}} \frac{3\cdot 2}{16\cdot 8\cdot N}\frac{16}{15}A\bigg [ 2+\bigg ( \sum_{i} W_{ii} \bigg )^2   -3 \sum_{ij} W_{ij}^2 + o_{L^2(\mathbb P)}(1) \bigg ],
\end{equation}
that together with (\ref{4caosarea}) immediately concludes the proof.
\end{proof}

Note that the dominant terms in $\mathcal A[4]$ and $\mathcal L[4]$ coincide, up to a factor depending on $m$.

\appendix 

\section{Proofs of technical lemmas}

\subsection{Proof of Lemma \ref{lem_2}}\label{sec_lem2}

\begin{proof}
 We start investigating the first summand on the r.h.s. of (\ref{exp_2}), we will isolate the contribution of the diagonal from that of off-diagonal terms: 
\begin{eqnarray}\label{ciao1}
\int_{\Sigma} H_2(F(\sigma))\,d\sigma &=& \int_{\Sigma} \left ( \frac{1}{N} \sum_{\mu, \mu'} a_\mu \overline{a_{\mu'}} \e_{\mu}(\sigma) \e_{-\mu'}(\sigma) - 1 \right )\, d\sigma \cr
&=&  |\Sigma | \frac{1}{N} \sum_{\mu} (|a_\mu |^2-1) + \frac{1}{N} \sum_{\mu\ne \mu'} a_\mu \overline{a_{\mu'}}\int_{\Sigma}   \e_{\mu}(\sigma) \e_{-\mu'}(\sigma)\, d\sigma.
\end{eqnarray}
For the other summand we have 
\begin{eqnarray}\label{term}
 \int_{\Sigma} \left ( H_2(Z_1(\sigma)) + H_2(Z_2(\sigma)) \right ) \,d\sigma &=& \int_{\Sigma} \left ( Z_1(\sigma)^2 + Z_2(\sigma)^2 - 2 \right ) \,d\sigma\cr
 &=& \int_{\Sigma} \left ( | (Z_1(\sigma), Z_2(\sigma))|^2 - 2 \right ) \,d\sigma
\end{eqnarray}
recalling that $H_2(t) = t^2 -1$. Since $M|(Z_1, Z_2)|^2 = |\nabla_\Sigma F|^2$, the random variable $\mathcal L[2]$ can be written as the integral of an explicit bivariate polynomial of degree two evaluated at $F$ and $|\nabla_\Sigma F|^2$ as anticipated in Remark \ref{remchaos}. 
From (\ref{Z_F}) we can write
\begin{equation}\label{sviluppo}
| (Z_1(\sigma), Z_2(\sigma))|^2=\frac{1}{n_3^2} \left ( (n_1^2 + n_3^2) (\widetilde \partial_1^\Sigma F)^2 + (n_2^2 + n_3^2)(\widetilde \partial_2^\Sigma F)^2  + 2n_1n_2 \widetilde \partial_1^\Sigma F \cdot \widetilde \partial_2^\Sigma F    \right ),
\end{equation}
and by the definition of surface gradient we have
\begin{eqnarray}\label{due}
\widetilde \partial_1^\Sigma F &=& (1-n_1^2) \widetilde \partial_1 F - n_1 n_2 \widetilde \partial_2 F- n_1 n_3 \widetilde \partial_3 F\cr
\widetilde \partial_2^\Sigma F &=& (1-n_2^2) \widetilde \partial_2 F - n_1 n_2 \widetilde \partial_1 F- n_2 n_3 \widetilde \partial_3 F.
\end{eqnarray}
Plugging \paref{due} into \paref{sviluppo} yields
\begin{eqnarray}\label{nice}
| (Z_1(\sigma), Z_2(\sigma))|^2 &=&\frac{1}{n_3^2} \Big (  c_1 (\widetilde \partial_1 F)^2 + c_2 (\widetilde \partial_2 F)^2 +  c_3 (\widetilde \partial_3 F)^2\cr
&& + c_{12} \widetilde \partial_1 F \widetilde \partial_2 F + c_{13} \widetilde \partial_1 F \widetilde \partial_3 F + c_{23} \widetilde\partial_2 F \widetilde \partial_3 F 
\Big ),
\end{eqnarray}
where 
\begin{eqnarray*}
&&c_1 = n_3^2(1 - n_1^2),\  c_2 = n_3^2(1 - n_2^2),\  c_3 = n_3^2(1 - n_3^2)\cr
&&c_{12} = -2n_3^2 n_1 n_2,\  c_{13}=-2n_3^2 n_1 n_3,\ c_{23}=-2n_3^2n_2 n_3.
\end{eqnarray*}
Substituting (\ref{nice}) into (\ref{term}) we get 
\begin{eqnarray}\label{hola2}
 \int_{\Sigma} \left ( H_2(Z_1(\sigma)) + H_2(Z_2(\sigma)) \right ) \,d\sigma &=& 
 \int_{\Sigma} \Big [\frac{1}{n_3^2} \Big (  c_1 (\widetilde \partial_1 F)^2 + c_2 (\widetilde \partial_2 F)^2 +  c_3 (\widetilde \partial_3 F)^2 + c_{12} \widetilde \partial_1 F \widetilde \partial_2 F \cr 
 &&+ c_{13} \widetilde \partial_1 F \widetilde \partial_3 F + c_{23} \widetilde\partial_2 F \widetilde \partial_3 F 
\Big ) - 2 \Big ]\,d\sigma.
\end{eqnarray}
For $i,j=1,2$ 
$$
\widetilde \partial_i F(\sigma) \widetilde \partial_j F(\sigma) = 4\pi^2 \frac{1}{M} \frac{1}{N} \sum_{\mu, \mu'} \mu_i \mu'_j a_\mu \overline{a_{\mu'}}\e_{\mu}(\sigma) \e_{-\mu'}(\sigma),
$$
hence we can rewrite \paref{hola2} isolating the contribution of the diagonal from that of off-diagonal terms as 
\begin{eqnarray}
\label{hola3}
&& \int_{\Sigma} \left ( H_2(Z_1(\sigma)) + H_2(Z_2(\sigma)) \right ) \,d\sigma \cr
&&= \frac{4\pi^2 }{M N}\left [ \sum_{i} \left (\sum_{\mu} |a_\mu |^2\mu_i^2  \int_{\Sigma} (1-n_i(\sigma)^2)\, d\sigma + \sum_{\mu\ne \mu'} a_\mu \overline{a_{\mu'}}\mu_i \mu_i' \int_{\Sigma} (1-n_i(\sigma)^2)e_\mu(\sigma) e_{-\mu'}(\sigma)\, d\sigma\right )\right.\cr
&&\left .-  \sum_{i<j}  \left ( \sum_{\mu} |a_\mu |^2\mu_i \mu_j \int_{\Sigma} 2n_i(\sigma) n_j(\sigma)\, d\sigma + \sum_{\mu\ne \mu'} a_\mu \overline{a_{\mu'}} \mu_i \mu'_j \int_{\Sigma} 2n_i(\sigma) n_j(\sigma) e_\mu(\sigma) e_{-\mu'}(\sigma)\, d\sigma \right )\right ]\cr
&&-2 \int_{\Sigma} d\sigma = \frac{ 4\pi^2}{MN} \sum_{\mu} |a_\mu |^2 \left (  \langle \mu, \mu\rangle|\Sigma | -   \int_\Sigma \langle \mu, n(\sigma)\rangle^2 \, d\sigma\right )  -2 \int_{\Sigma} d\sigma \cr
 &&  + \frac{4\pi^2}{MN}  \sum_{\mu\ne\mu'} a_\mu \overline{a_{\mu'}}  \left (\langle \mu, \mu'\rangle \int_{\Sigma} e_{\mu}(\sigma) e_{-\mu'}(\sigma)\,d\sigma-  \int_\Sigma \langle \mu, n(\sigma)\rangle \langle \mu', n(\sigma)\rangle e_{\mu}(\sigma) e_{-\mu'}(\sigma) \, d\sigma \right )\cr
&&= 3   \frac{1}{N} \sum_{\mu} |a_\mu |^2  \left ( |\Sigma | 
-    \int_\Sigma \left \langle \frac{\mu}{|\mu|}, n(\sigma)\right \rangle^2 \, d\sigma\right )  -2 |\Sigma|\cr
&& +  3\frac{1}{N} \sum_{\mu\ne\mu'} a_\mu \overline{a_{\mu'}}\int_{\Sigma} \left ( \left \langle \frac{\mu}{|\mu|}, \frac{\mu'}{|\mu'|} \right \rangle    -     \left \langle \frac{\mu}{|\mu|}, n(\sigma) \right \rangle \left \langle \frac{\mu'}{|\mu'|}, n(\sigma) \right \rangle\right ) e_{\mu}(\sigma) e_{-\mu'}(\sigma) \, d\sigma .
\end{eqnarray}
Now since 
$$
\E[|a_\mu |^2] = 1
$$
and 
$$
 \E \left [ \frac{1}{N} \sum_{\mu} |a_\mu |^2 \int_\Sigma \left \langle \frac{\mu}{|\mu|}, n(\sigma)\right \rangle^2 \, d\sigma  \right ] = \frac{1}{3} |\Sigma|,
$$
we can rewrite the r.h.s of \paref{hola3} as 
\begin{eqnarray}\label{hola4}
&& \int_{\Sigma} \left ( H_2(Z_1(\sigma)) + H_2(Z_2(\sigma)) \right ) \,d\sigma  = 3  \frac{1}{N} \sum_{\mu} (|a_\mu |^2  -1)\left (|\Sigma |  - \int_\Sigma \left \langle \frac{\mu}{|\mu|}, n(\sigma)\right \rangle^2 \, d\sigma \right )\cr
&&+  3\frac{1}{N} \sum_{\mu\ne\mu'} a_\mu \overline{a_{\mu'}}  \int_{\Sigma}\left (\left \langle \frac{\mu}{|\mu|}, \frac{\mu'}{|\mu'|} \right \rangle - \left \langle \frac{\mu}{|\mu|}, n(\sigma) \right \rangle \left \langle \frac{\mu'}{|\mu'|}, n(\sigma) \right \rangle \right ) e_{\mu}(\sigma) e_{-\mu'}(\sigma)\,d\sigma.
\end{eqnarray}
Let us set 
\begin{equation}\label{R0}
    R(0) := 3\frac{1}{N} \sum_{\mu\ne\mu'} a_\mu \overline{a_{\mu'}}  \int_{\Sigma}\left (\left \langle \frac{\mu}{|\mu|}, \frac{\mu'}{|\mu'|} \right \rangle - \left \langle \frac{\mu}{|\mu|}, n(\sigma) \right \rangle \left \langle \frac{\mu'}{|\mu'|}, n(\sigma) \right \rangle \right ) e_{\mu}(\sigma) e_{-\mu'}(\sigma)\,d\sigma.
\end{equation}
We are now in a position to write an explicit formula for the second chaotic component: from (\ref{ciao1}) and (\ref{hola4})
\begin{eqnarray}\label{hola5}
\mathcal L[2] &=& \sqrt M \left ( \frac{\beta_{2}\alpha_{0,0}}{2!} \int_{\Sigma} H_2(F(\sigma))\,d\sigma  +  \frac{\beta_{0}\alpha_{2,0}}{2!} \int_{\Sigma} (H_2(Z_1(\sigma)) + H_2(Z_2(\sigma)))\,d\sigma     \right )\cr
&=&  \sqrt M \Big[  \frac{\beta_{2}\alpha_{0,0}}{2!} |\Sigma | \frac{1}{N} \sum_{\mu} (|a_\mu |^2-1)\cr
&&+ \frac{\beta_{0}\alpha_{2,0}}{2!}  3  \frac{1}{N} \sum_{\mu} (|a_\mu |^2  -1)\left (|\Sigma |  - \int_\Sigma \left \langle \frac{\mu}{|\mu|}, n(\sigma)\right \rangle^2 \, d\sigma \right )   \cr
&& +\frac{\beta_{2}\alpha_{0,0}}{2!} \frac{1}{N} \sum_{\mu\ne \mu'} a_\mu \overline{a_{\mu'}}\int_{\Sigma}   \e_{\mu}(\sigma) \e_{-\mu'}(\sigma)\, d\sigma \cr
&&+ \frac{\beta_{0}\alpha_{2,0}}{2!}  3\frac{1}{N} \sum_{\mu\ne\mu'} a_\mu \overline{a_{\mu'}}  \int_{\Sigma}\left (\left \langle \frac{\mu}{|\mu|}, \frac{\mu'}{|\mu'|} \right \rangle - \left \langle \frac{\mu}{|\mu|}, n(\sigma) \right \rangle \left \langle \frac{\mu'}{|\mu'|}, n(\sigma) \right \rangle \right ) e_{\mu}(\sigma) e_{-\mu'}(\sigma)\,d\sigma \Big ]\cr 
& =&  \sqrt M \Big [  \frac{1}{N} \sum_{\mu} (|a_\mu |^2  -1) \left ( |\Sigma | \frac{\beta_{2}\alpha_{0,0}}{2!}  + 3 |\Sigma | \frac{\beta_{0}\alpha_{2,0}}{2!}         
-    3 \frac{\beta_{0}\alpha_{2,0}}{2!}\int_\Sigma \left \langle \frac{\mu}{|\mu|}, n(\sigma)\right \rangle^2 \, d\sigma  \right )  \cr
&&+ \frac{1}{N} \sum_{\mu\ne \mu'} a_\mu \overline{a_{\mu'}} \Big (  \frac{\beta_{2}\alpha_{0,0}}{2!} \int_{\Sigma}   \e_{\mu}(\sigma) \e_{-\mu'}(\sigma)\, d\sigma \cr
&&+3 \frac{\beta_{0}\alpha_{2,0}}{2!}  \int_{\Sigma}\left (\left \langle \frac{\mu}{|\mu|}, \frac{\mu'}{|\mu'|} \right \rangle - \left \langle \frac{\mu}{|\mu|}, n(\sigma) \right \rangle \left \langle \frac{\mu'}{|\mu'|}, n(\sigma) \right \rangle \right ) e_{\mu}(\sigma) e_{-\mu'}(\sigma)\,d\sigma \Big ) \Big ] \cr
& =& \sqrt M \Big [  \frac{1}{N} \sum_{\mu} (|a_\mu |^2  -1) \left ( |\Sigma | \frac{1}{8}
-    \frac{3}{8}\int_\Sigma \left \langle \frac{\mu}{|\mu|}, n(\sigma)\right \rangle^2 \, d\sigma  \right )     \cr
&&+ \frac{1}{N} \sum_{\mu\ne \mu'} a_\mu \overline{a_{\mu'}} \Big (  \int_{\Sigma}e_{\mu}(\sigma) e_{-\mu'}(\sigma) \Big ( -\frac14 + \frac38 \left (\left \langle \frac{\mu}{|\mu|}, \frac{\mu'}{|\mu'|} \right \rangle \right.\cr
&&\left.- \left \langle \frac{\mu}{|\mu|}, n(\sigma) \right \rangle \left \langle \frac{\mu'}{|\mu'|}, n(\sigma) \right \rangle \right )\Big )d\sigma \Big ) \Big ] \cr 
&=& \frac{\sqrt M}{8 \sqrt{\frac{N}{2} }} \Big [  \frac{1}{\sqrt{\frac{N}{2} }} \sum_{\mu\in \mathcal E^+} (|a_\mu |^2  -1) \left ( |\Sigma | 
-   3\int_\Sigma \left \langle \frac{\mu}{|\mu|}, n(\sigma)\right \rangle^2 \, d\sigma  \right )\Big ]  \cr
&&+ \frac{\sqrt{M}}{N} \sum_{\mu\ne \mu'} a_\mu \overline{a_{\mu'}} \Big (  \int_{\Sigma}e_{\mu}(\sigma) e_{-\mu'}(\sigma) \Big ( -\frac14 + \frac38  \left \langle \frac{\mu}{|\mu|}, \frac{\mu'}{|\mu'|} \right \rangle \cr
&&- \frac38 \left \langle \frac{\mu}{|\mu|}, n(\sigma) \right \rangle \left \langle \frac{\mu'}{|\mu'|}, n(\sigma) \right \rangle \Big )d\sigma \Big ) \Big ]\cr
& =& \mathcal L^a[2] + \mathcal L^b[2]
\end{eqnarray}
as defined in (\ref{2a}) and (\ref{2b}) thus concluding the proof.
\end{proof}

\subsection{Proof of Lemma \ref{4dec}}\label{app4chaos}

Let us first recall some properties of $\mathcal C(4)$ as defined in (\ref{l-corr}). We have 
\begin{equation}\label{c4}
    \mathcal C(4) = \mathcal D'(4) \cup \mathcal X(4),
\end{equation}
where $\mathcal X(4)$ is the set of non-degenerate $4$-length correlations, see (\ref{defxl}), and we denote by $\mathcal D'(4)=\mathcal D'_m(4)$ the complement of $\mathcal X(4)$ in $\mathcal C(4)$. It is known \cite[Section 2]{benmaf} that $\mathcal D'(4)$ coincide with the set of quadruples that \emph{cancel out in pairs}. 

\begin{proof}
Let us start investigating the first summand on the r.h.s. of \paref{4chaos}: 
\begin{eqnarray*}
\int_{\Sigma} H_4(F(\sigma))\,d\sigma 
&
=& \int_{\Sigma} (F(\sigma)^4 - 6F(\sigma)^2 + 3)\,d\sigma \cr 
& =&
\frac{1}{N^2} \sum_{\mu, \mu', \mu'', \mu'''} a_{\mu} \overline{a_{\mu'}} a_{\mu''} \overline{a_{\mu'''}} \int_{\Sigma} \e_{\mu - \mu' + \mu'' - \mu'''}(\sigma)\,d\sigma \cr
&& 
- 6 \frac{1}{N} \sum_{\mu,\mu'} a_\mu \overline{a_{\mu'}} \int_{\Sigma} \e_{\mu -\mu'}(\sigma)\,d\sigma + 3\int_{\Sigma} d\sigma\cr
&=&\frac{1}{N^2} \sum_{(\mu, \mu', \mu'', \mu''')\in \mathcal C(4)
} a_{\mu} \overline{a_{\mu'}} a_{\mu''} \overline{a_{\mu'''}} 
|\Sigma| 
- 6 \frac{1}{N} \sum_{\mu} |a_\mu |^2 |\Sigma| 
 + 3|\Sigma|\cr
 &&+ \frac{1}{N^2} \sum_{(\mu, \mu', \mu'', \mu''')\notin \mathcal C(4)
} a_{\mu} \overline{a_{\mu'}} a_{\mu''} \overline{a_{\mu'''}}\int_{\Sigma} \e_{\mu - \mu' + \mu'' - \mu'''}(\sigma)\,d\sigma \cr
&&- 6 \frac{1}{N} \sum_{\mu\ne \mu'} a_\mu \overline{a_{\mu'}} \int_{\Sigma} \e_{\mu -\mu'}(\sigma)\,d\sigma.
\end{eqnarray*}
Recalling the structure of $\mathcal C(4)$ in (\ref{c4}), and in particular that of $\mathcal D'(4)$, we can write  
\begin{eqnarray}\label{4chaos1}
\int_{\Sigma} H_4(F(\sigma))\,d\sigma 
&
=& \int_{\Sigma} (F(\sigma)^4 - 6F(\sigma)^2 + 3)\,d\sigma \cr 
&=&\frac{1}{N^2} \sum_{(\mu, \mu', \mu'', \mu''')\in \mathcal D'(4)
} a_{\mu} \overline{a_{\mu'}} a_{\mu''} \overline{a_{\mu'''}} 
|\Sigma| + \frac{1}{N^2} \sum_{(\mu, \mu', \mu'', \mu''')\in \mathcal X(4)
} a_{\mu} \overline{a_{\mu'}} a_{\mu''} \overline{a_{\mu'''}} 
|\Sigma| \cr
&&- 6 \frac{1}{N} \sum_{\mu} |a_\mu |^2 |\Sigma| 
 + 3|\Sigma|\cr
 &&+ \frac{1}{N^2} \sum_{(\mu, \mu', \mu'', \mu''')\notin \mathcal C(4)
} a_{\mu} \overline{a_{\mu'}} a_{\mu''} \overline{a_{\mu'''}}\int_{\Sigma} \e_{\mu - \mu' + \mu'' - \mu'''}(\sigma)\,d\sigma \cr
&&- 6 \frac{1}{N} \sum_{\mu\ne \mu'} a_\mu \overline{a_{\mu'}} \int_{\Sigma} \e_{\mu -\mu'}(\sigma)\,d\sigma\cr
&=&3
\frac{1}{N^2} \sum_{\mu, \mu'
} |a_\mu |^2 |a_{\mu'}|^2 
|\Sigma| 
- 6 \frac{1}{N} \sum_{\mu} |a_\mu |^2 |\Sigma| - 3 
\frac{1}{N^2} \sum_{\mu
} |a_\mu |^4
 + 3|\Sigma| \cr 
 &&+\frac{1}{N^2} \sum_{(\mu, \mu', \mu'', \mu''')\in \mathcal X(4)
} a_{\mu} \overline{a_{\mu'}} a_{\mu''} \overline{a_{\mu'''}} 
|\Sigma| \cr
&&+ \frac{1}{N^2} \sum_{(\mu, \mu', \mu'', \mu''')\notin \mathcal C(4)
} a_{\mu} \overline{a_{\mu'}} a_{\mu''} \overline{a_{\mu'''}}\int_{\Sigma} \e_{\mu - \mu' + \mu'' - \mu'''}(\sigma)\,d\sigma\cr 
&&- 6 \frac{1}{N} \sum_{\mu\ne \mu'} a_\mu \overline{a_{\mu'}} \int_{\Sigma} \e_{\mu -\mu'}(\sigma)\,d\sigma\cr 
 & =& 3\cdot 2 |\Sigma | \frac{1}{N} \left (\frac{1}{\sqrt{N/2}} \sum_{\mu\in \mathcal E^+}
 (|a_\mu |^2 - 1) \right )^2 - 3 
\frac{1}{N}\left ( \frac{1}{N}\sum_{\mu
} |a_\mu |^4 \right )\cr
&&  +\frac{1}{N^2} \sum_{(\mu, \mu', \mu'', \mu''')\in \mathcal X(4)
} a_{\mu} \overline{a_{\mu'}} a_{\mu''} \overline{a_{\mu'''}} 
|\Sigma|\cr
&&+ \frac{1}{N^2} \sum_{(\mu, \mu', \mu'', \mu''')\notin \mathcal C(4)
} a_{\mu} \overline{a_{\mu'}} a_{\mu''} \overline{a_{\mu'''}}\int_{\Sigma} \e_{\mu - \mu' + \mu'' - \mu'''}(\sigma)\,d\sigma\cr
&&- 6 \frac{1}{N} \sum_{\mu\ne \mu'} a_\mu \overline{a_{\mu'}} \int_{\Sigma} \e_{\mu -\mu'}(\sigma)\,d\sigma.
\end{eqnarray}
Let us set
\begin{eqnarray}\label{resto1}
    R(1) &:=&\frac{1}{N^2} \sum_{(\mu, \mu', \mu'', \mu''')\in \mathcal X(4)
} a_{\mu} \overline{a_{\mu'}} a_{\mu''} \overline{a_{\mu'''}} 
|\Sigma|\cr
&&+ \frac{1}{N^2} \sum_{(\mu, \mu', \mu'', \mu''')\notin \mathcal C(4)
} a_{\mu} \overline{a_{\mu'}} a_{\mu''} \overline{a_{\mu'''}}\int_{\Sigma} \e_{\mu - \mu' + \mu'' - \mu'''}(\sigma)\,d\sigma\cr 
&&- 6 \frac{1}{N} \sum_{\mu\ne \mu'} a_\mu \overline{a_{\mu'}} \int_{\Sigma} \e_{\mu -\mu'}(\sigma)\,d\sigma.
\end{eqnarray}
%
Let us now investigate the second term on the r.h.s. of \paref{4chaos}.
\begin{eqnarray}\label{4c2}
&&\int_{\Sigma} H_2(F(\sigma)) (
H_2(Z_1(\sigma)) +  H_2(Z_2(
\sigma)) )
\,d\sigma  =  \int_{\Sigma} (F(\sigma)^2 -1)
(
Z_1(\sigma)^2 +  Z_2(
\sigma)^2 - 2 )
\,d\sigma \cr
&&= \int_{\Sigma} (F(\sigma)^2 -1)
(
\| (Z_1(\sigma), Z_2(
\sigma))\|^2 - 2 )
\,d\sigma \cr
&&= \int_{\Sigma} (F(\sigma)^2 -1)\Big [\frac{1}{n_3^2} \Big (  c_1 (\widetilde \partial_1 F)^2 + c_2 (\widetilde \partial_2 F)^2 +  c_3 (\widetilde \partial_3 F)^2 + c_{12} \widetilde \partial_1 F \widetilde \partial_2 F + c_{13} \widetilde \partial_1 F \widetilde \partial_3 F\cr
&&+ c_{23} \widetilde\partial_2 F \widetilde \partial_3 F \Big ) - 2 \Big ]\,d\sigma \cr
&& = \frac{4\pi^2}{M} \frac{1}{N^2} \sum_{\mu, \mu', \mu'', \mu'''} a_\mu \overline{a_{\mu'}} a_{\mu''} \overline{a_{\mu'''}} \mu_1 \mu'_1 \int_{\Sigma} (1-n_1(\sigma)^2)\, \e_{\mu -\mu' + \mu'' - \mu'''}(\sigma)\, d\sigma \cr
 && +  \frac{4\pi^2}{M} \frac{1}{N^2} \sum_{\mu, \mu', \mu'', \mu'''} a_\mu \overline{a_{\mu'}} a_{\mu''} \overline{a_{\mu'''}} \mu_2 \mu'_2 \int_{\Sigma} (1-n_2(\sigma)^2)\, \e_{\mu -\mu' + \mu'' - \mu'''}(\sigma)\, d\sigma \cr
  && +  \frac{4\pi^2}{M} \frac{1}{N^2} \sum_{\mu, \mu', \mu'', \mu'''} a_\mu \overline{a_{\mu'}} a_{\mu''} \overline{a_{\mu'''}} \mu_3 \mu'_3 \int_{\Sigma} (1-n_3(\sigma)^2)\, \e_{\mu -\mu' + \mu'' - \mu'''}(\sigma)\, d\sigma \cr
  && -  \frac{4\pi^2}{M} \frac{1}{N^2} \sum_{\mu, \mu', \mu'', \mu'''} a_\mu \overline{a_{\mu'}} a_{\mu''} \overline{a_{\mu'''}} \mu_1 \mu'_2 \int_{\Sigma} 2n_1(\sigma)n_2(\sigma))\, \e_{\mu -\mu' + \mu'' - \mu'''}(\sigma)\, d\sigma \cr
   && -  \frac{4\pi^2}{M} \frac{1}{N^2} \sum_{\mu, \mu', \mu'', \mu'''} a_\mu \overline{a_{\mu'}} a_{\mu''} \overline{a_{\mu'''}} \mu_1 \mu'_3 \int_{\Sigma} 2n_1(\sigma)n_3(\sigma))\, \e_{\mu -\mu' + \mu'' - \mu'''}(\sigma)\, d\sigma \cr
   && -  \frac{4\pi^2}{M} \frac{1}{N^2} \sum_{\mu, \mu', \mu'', \mu'''} a_\mu \overline{a_{\mu'}} a_{\mu''} \overline{a_{\mu'''}} \mu_2 \mu'_3 \int_{\Sigma} 2n_2(\sigma)n_3(\sigma))\, \e_{\mu -\mu' + \mu'' - \mu'''}(\sigma)\, d\sigma \cr
   && -  \frac{2}{N} \sum_{\mu, \mu'} a_\mu \overline{a_{\mu'}} \int_{\Sigma}  \e_{\mu -\mu'}(\sigma)\, d\sigma \cr
 &&  - \int_{\Sigma} \Big [\frac{1}{n_3^2} \Big (  c_1 (\widetilde \partial_1 F)^2 + c_2 (\widetilde \partial_2 F)^2 +  c_3 (\widetilde \partial_3 F)^2 + c_{12} \widetilde \partial_1 F \widetilde \partial_2 F + c_{13} \widetilde \partial_1 F \widetilde \partial_3 F\cr
 &&+ c_{23} \widetilde\partial_2 F \widetilde \partial_3 F \Big ) - 2 \Big ]\,d\sigma. 
\end{eqnarray}
The last summand on the r.h.s. of (\ref{4c2}) has been already investigated in Section \ref{sec_lem2}, see (\ref{hola4}). For the other summands, repeating the same argument as for (\ref{4chaos1}), we have 
\begin{eqnarray}\label{argh1}
&&\int_{\Sigma} H_2(F(\sigma)) (
H_2(Z_1(\sigma)) +  H_2(Z_2(
\sigma)) )
\,d\sigma  =  \int_{\Sigma} (F(\sigma)^2 -1)
(
Z_1(\sigma)^2 +  Z_2(
\sigma)^2 - 2 )
\,d\sigma \cr
&& = \frac{4\pi^2}{M} \frac{1}{N^2} \sum_{\mu, \mu'} |a_\mu |^2 |a_{\mu'} |^2 \mu_1^2 \int_{\Sigma} (1-n_1(\sigma)^2)\,  d\sigma + 2 \cdot \frac{4\pi^2}{M} \underbrace{\frac{1}{N^2} \sum_{\mu, \mu'} |a_\mu |^2 |a_{\mu'} |^2 \mu_1 \mu'_1}_{=0}
\int_{\Sigma} (1-n_1(\sigma)^2)\,  d\sigma\cr
&& - \frac{4\pi^2}{M} \frac{1}{N^2} \sum_{\mu} |a_\mu |^4 \mu_1^2 \int_{\Sigma} (1-n_1(\sigma)^2)\,  d\sigma \cr
&&+ \frac{4\pi^2}{M} \frac{1}{N^2} \sum_{(\mu, \mu', \mu'', \mu''')\in \mathcal X(4)} a_\mu \overline{a_{\mu'}} a_{\mu''} \overline{a_{\mu'''}} \mu_1 \mu'_1 \int_{\Sigma} (1-n_1(\sigma)^2)\, d\sigma\cr
&&+ \frac{4\pi^2}{M} \frac{1}{N^2} \sum_{(\mu, \mu', \mu'', \mu''')\notin \mathcal C(4)} a_\mu \overline{a_{\mu'}} a_{\mu''} \overline{a_{\mu'''}} \mu_1 \mu'_1 \int_{\Sigma} (1-n_1(\sigma)^2)\, \e_{\mu -\mu' + \mu'' - \mu'''}(\sigma)\, d\sigma\cr
&& + \frac{4\pi^2}{M} \frac{1}{N^2} \sum_{\mu, \mu'} |a_\mu |^2 |a_{\mu'} |^2 \mu_2^2 \int_{\Sigma} (1-n_2(\sigma)^2)\,  d\sigma + 2 \cdot \frac{4\pi^2}{M} \underbrace{\frac{1}{N^2} \sum_{\mu, \mu'} |a_\mu |^2 |a_{\mu'} |^2 \mu_2 \mu'_2}_{=0} \int_{\Sigma} (1-n_2(\sigma)^2)\,  d\sigma\cr
&& - \frac{4\pi^2}{M} \frac{1}{N^2} \sum_{\mu} |a_\mu |^4 \mu_2^2 \int_{\Sigma} (1-n_2(\sigma)^2)\,  d\sigma \cr
&& +  \frac{4\pi^2}{M} \frac{1}{N^2} \sum_{(\mu, \mu', \mu'', \mu''')\in \mathcal X(4)} a_\mu \overline{a_{\mu'}} a_{\mu''} \overline{a_{\mu'''}} \mu_2 \mu'_2 \int_{\Sigma} (1-n_2(\sigma)^2)\, d\sigma \cr
&& +  \frac{4\pi^2}{M} \frac{1}{N^2} \sum_{(\mu, \mu', \mu'', \mu''')\notin \mathcal C(4)} a_\mu \overline{a_{\mu'}} a_{\mu''} \overline{a_{\mu'''}} \mu_2 \mu'_2 \int_{\Sigma} (1-n_2(\sigma)^2)\, \e_{\mu -\mu' + \mu'' - \mu'''}(\sigma)\, d\sigma \cr
&& + \frac{4\pi^2}{M} \frac{1}{N^2} \sum_{\mu, \mu'} |a_\mu |^2 |a_{\mu'} |^2 \mu_3^2 \int_{\Sigma} (1-n_3(\sigma)^2)\,  d\sigma + 2 \cdot \frac{4\pi^2}{M} \underbrace{\frac{1}{N^2} \sum_{\mu, \mu'} |a_\mu |^2 |a_{\mu'} |^2 \mu_3 \mu'_3}_{=0} \int_{\Sigma} (1-n_3(\sigma)^2)\,  d\sigma\cr
&& - \frac{4\pi^2}{M} \frac{1}{N^2} \sum_{\mu} |a_\mu |^4 \mu_3^2 \int_{\Sigma} (1-n_3(\sigma)^2)\,  d\sigma \cr
&& +  \frac{4\pi^2}{M} \frac{1}{N^2} \sum_{(\mu, \mu', \mu'', \mu''')\in \mathcal X(4)} a_\mu \overline{a_{\mu'}} a_{\mu''} \overline{a_{\mu'''}} \mu_3 \mu'_3 \int_{\Sigma} (1-n_3(\sigma)^2)\, d\sigma \cr
&& +  \frac{4\pi^2}{M} \frac{1}{N^2} \sum_{(\mu, \mu', \mu'', \mu''')\notin \mathcal C(4)} a_\mu \overline{a_{\mu'}} a_{\mu''} \overline{a_{\mu'''}} \mu_3 \mu'_3 \int_{\Sigma} (1-n_3(\sigma)^2)\, \e_{\mu -\mu' + \mu'' - \mu'''}(\sigma)\, d\sigma \cr
&& - \frac{4\pi^2}{M} \frac{1}{N^2} \sum_{\mu, \mu'} |a_\mu |^2 |a_{\mu'} |^2 \mu_1\mu_2 \int_{\Sigma} 2n_1(\sigma) n_2(\sigma)\,  d\sigma - 2 \cdot \frac{4\pi^2}{M} \underbrace{\frac{1}{N^2} \sum_{\mu, \mu'} |a_\mu |^2 |a_{\mu'} |^2 \mu_1 \mu'_2}_{=0} \int_{\Sigma}2n_1(\sigma) n_2(\sigma)\,  d\sigma\cr
&& + \frac{4\pi^2}{M} \frac{1}{N^2} \sum_{\mu} |a_\mu |^4 \mu_1\mu_2 \int_{\Sigma} 2n_1(\sigma) n_2(\sigma)\,  d\sigma \cr
&& -  \frac{4\pi^2}{M} \frac{1}{N^2} \sum_{(\mu, \mu', \mu'', \mu''')\in \mathcal X(4)} a_\mu \overline{a_{\mu'}} a_{\mu''} \overline{a_{\mu'''}} \mu_1 \mu'_2 \int_{\Sigma} 2n_1(\sigma)n_2(\sigma))\,d\sigma \cr
&& -  \frac{4\pi^2}{M} \frac{1}{N^2} \sum_{(\mu, \mu', \mu'', \mu''')\notin \mathcal C(4)} a_\mu \overline{a_{\mu'}} a_{\mu''} \overline{a_{\mu'''}} \mu_1 \mu'_2 \int_{\Sigma} 2n_1(\sigma)n_2(\sigma))\, \e_{\mu -\mu' + \mu'' - \mu'''}(\sigma)\, d\sigma \cr
&& - \frac{4\pi^2}{M} \frac{1}{N^2} \sum_{\mu, \mu'} |a_\mu |^2 |a_{\mu'} |^2 \mu_1\mu_3 \int_{\Sigma} 2n_1(\sigma) n_3(\sigma)\,  d\sigma - 2 \cdot \frac{4\pi^2}{M} \underbrace{\frac{1}{N^2} \sum_{\mu, \mu'} |a_\mu |^2 |a_{\mu'} |^2 \mu_1 \mu'_3}_{=0} \int_{\Sigma}2n_1(\sigma) n_3(\sigma)\,  d\sigma\cr
&& + \frac{4\pi^2}{M} \frac{1}{N^2} \sum_{\mu} |a_\mu |^4 \mu_1\mu_3 \int_{\Sigma} 2n_1(\sigma) n_3(\sigma)\,  d\sigma \cr
&& -  \frac{4\pi^2}{M} \frac{1}{N^2} \sum_{(\mu, \mu', \mu'', \mu''')\in \mathcal X(4)} a_\mu \overline{a_{\mu'}} a_{\mu''} \overline{a_{\mu'''}} \mu_1 \mu'_3 \int_{\Sigma} 2n_1(\sigma)n_3(\sigma))\,d\sigma \cr
&& -  \frac{4\pi^2}{M} \frac{1}{N^2} \sum_{(\mu, \mu', \mu'', \mu''')\notin \mathcal C(4)} a_\mu \overline{a_{\mu'}} a_{\mu''} \overline{a_{\mu'''}} \mu_1 \mu'_3 \int_{\Sigma} 2n_1(\sigma)n_3(\sigma))\, \e_{\mu -\mu' + \mu'' - \mu'''}(\sigma)\, d\sigma \cr
&& - \frac{4\pi^2}{M} \frac{1}{N^2} \sum_{\mu, \mu'} |a_\mu |^2 |a_{\mu'} |^2 \mu_2\mu_3 \int_{\Sigma} 2n_2(\sigma) n_3(\sigma)\,  d\sigma - 2 \cdot \frac{4\pi^2}{M} \underbrace{\frac{1}{N^2} \sum_{\mu, \mu'} |a_\mu |^2 |a_{\mu'} |^2 \mu_2 \mu'_3}_{=0} \int_{\Sigma}2n_2(\sigma) n_3(\sigma)\,  d\sigma\cr
&& + \frac{4\pi^2}{M} \frac{1}{N^2} \sum_{\mu} |a_\mu |^4 \mu_2\mu_3 \int_{\Sigma} 2n_2(\sigma) n_3(\sigma)\,  d\sigma \cr
&& -  \frac{4\pi^2}{M} \frac{1}{N^2} \sum_{(\mu, \mu', \mu'', \mu''')\in \mathcal X(4)} a_\mu \overline{a_{\mu'}} a_{\mu''} \overline{a_{\mu'''}} \mu_2 \mu'_3 \int_{\Sigma} 2n_2(\sigma)n_3(\sigma))\,d\sigma \cr
&& -  \frac{4\pi^2}{M} \frac{1}{N^2} \sum_{(\mu, \mu', \mu'', \mu''')\notin \mathcal C(4)} a_\mu \overline{a_{\mu'}} a_{\mu''} \overline{a_{\mu'''}} \mu_2 \mu'_3 \int_{\Sigma} 2n_2(\sigma)n_3(\sigma))\, \e_{\mu -\mu' + \mu'' - \mu'''}(\sigma)\, d\sigma \cr
&& -  \frac{2}{N} \sum_{\mu} |a_\mu |^2 |\Sigma | -  \frac{2}{N} \sum_{\mu\ne \mu'} a_\mu \overline{a_{\mu'}} \int_{\Sigma}  \e_{\mu -\mu'}(\sigma)\, d\sigma  \cr
&&- 3  \frac{1}{N} \sum_{\mu} (|a_\mu |^2  -1)\left (|\Sigma |  - \int_\Sigma \left \langle \frac{\mu}{|\mu|}, n(\sigma)\right \rangle^2 \, d\sigma \right )\cr
&&-  3\frac{1}{N} \sum_{\mu\ne\mu'} a_\mu \overline{a_{\mu'}}  \int_{\Sigma}\left (\left \langle \frac{\mu}{|\mu|}, \frac{\mu'}{|\mu'|} \right \rangle - \left \langle \frac{\mu}{|\mu|}, n(\sigma) \right \rangle \left \langle \frac{\mu'}{|\mu'|}, n(\sigma) \right \rangle \right ) e_{\mu}(\sigma) e_{-\mu'}(\sigma)\,d\sigma.
\end{eqnarray}
Let us rewrite the r.h.s. of (\ref{argh1}) in a more compact way:
\begin{eqnarray}
&&\int_{\Sigma} H_2(F(\sigma)) (
H_2(Z_1(\sigma)) +  H_2(Z_2(
\sigma)) )
\,d\sigma  =  \int_{\Sigma} (F(\sigma)^2 -1)
(
Z_1(\sigma)^2 +  Z_2(
\sigma)^2 - 2 )
\,d\sigma \cr
 && = 3 \frac{1}{N^2} \sum_{\mu, \mu'} (|a_\mu |^2-1)( |a_{\mu'}|^2 -1)\Big (|\Sigma | -   \int_\Sigma \left \langle \frac{\mu}{|\mu|}, n(\sigma)\right \rangle^2 \, d\sigma        \Big ) \cr
 && - 3 \frac{1}{N^2} \sum_{\mu} |a_\mu |^4\Big (|\Sigma | -   \int_\Sigma \left \langle \frac{\mu}{|\mu|}, n(\sigma)\right \rangle^2 \, d\sigma        \Big )\cr
 && +\frac{3}{N^2} \sum_{(\mu,\mu',\mu'',\mu''')\in \mathcal X(4)} a_\mu \overline{a_{\mu'}} a_{\mu''} \overline{a_{\mu'''}} \left ( |\Sigma| \left \langle \frac{\mu}{|\mu|},\frac{\mu'}{|\mu|} \right \rangle - \int_\Sigma \left \langle \frac{\mu}{|\mu|},n(\sigma) \right \rangle \left \langle \frac{\mu'}{|\mu'|},n(\sigma) \right \rangle\,d\sigma \right ) \cr
  && +\frac{3}{N^2} \sum_{(\mu,\mu',\mu'',\mu''')\notin \mathcal C(4)} a_\mu \overline{a_{\mu'}} a_{\mu''} \overline{a_{\mu'''}} \left (  \left \langle \frac{\mu}{|\mu|},\frac{\mu'}{|\mu|} \right \rangle \int_\Sigma \text{e}_{\mu-\mu'+\mu''-\mu'''}(\sigma) \,d\sigma \right.\cr
  &&
  \left.- \int_\Sigma \left \langle \frac{\mu}{|\mu|},n(\sigma) \right \rangle \left \langle \frac{\mu'}{|\mu'|},n(\sigma) \right \rangle\text{e}_{\mu-\mu'+\mu''-\mu'''}(\sigma)\,d\sigma \right )  -  \frac{2}{N} \sum_{\mu\ne \mu'} a_\mu \overline{a_{\mu'}} \int_{\Sigma}  \e_{\mu -\mu'}(\sigma)\, d\sigma \cr
  &&-  3\frac{1}{N} \sum_{\mu\ne\mu'} a_\mu \overline{a_{\mu'}}  \int_{\Sigma}\left (\left \langle \frac{\mu}{|\mu|}, \frac{\mu'}{|\mu'|} \right \rangle - \left \langle \frac{\mu}{|\mu|}, n(\sigma) \right \rangle \left \langle \frac{\mu'}{|\mu'|}, n(\sigma) \right \rangle \right ) e_{\mu}(\sigma) e_{-\mu'}(\sigma)\,d\sigma.
\end{eqnarray}
As before, we set
\begin{eqnarray}
R(2)&:=& \frac{3}{N^2} \sum_{(\mu,\mu',\mu'',\mu''')\in \mathcal X(4)} a_\mu \overline{a_{\mu'}} a_{\mu''} \overline{a_{\mu'''}} \left ( |\Sigma| \left \langle \frac{\mu}{|\mu|},\frac{\mu'}{|\mu|} \right \rangle - \int_\Sigma \left \langle \frac{\mu}{|\mu|},n(\sigma) \right \rangle \left \langle \frac{\mu'}{|\mu'|},n(\sigma) \right \rangle\,d\sigma \right ) \cr
  && +\frac{3}{N^2} \sum_{(\mu,\mu',\mu'',\mu''')\notin \mathcal C(4)} a_\mu \overline{a_{\mu'}} a_{\mu''} \overline{a_{\mu'''}} \left (  \left \langle \frac{\mu}{|\mu|},\frac{\mu'}{|\mu|} \right \rangle \int_\Sigma \text{e}_{\mu-\mu'+\mu''-\mu'''}(\sigma) \,d\sigma \right.\cr
  &&
  \left.- \int_\Sigma \left \langle \frac{\mu}{|\mu|},n(\sigma) \right \rangle \left \langle \frac{\mu'}{|\mu'|},n(\sigma) \right \rangle\text{e}_{\mu-\mu'+\mu''-\mu'''}(\sigma)\,d\sigma \right )  -  \frac{2}{N} \sum_{\mu\ne \mu'} a_\mu \overline{a_{\mu'}} \int_{\Sigma}  \e_{\mu -\mu'}(\sigma)\, d\sigma \cr
  &&-  3\frac{1}{N} \sum_{\mu\ne\mu'} a_\mu \overline{a_{\mu'}}  \int_{\Sigma}\left (\left \langle \frac{\mu}{|\mu|}, \frac{\mu'}{|\mu'|} \right \rangle - \left \langle \frac{\mu}{|\mu|}, n(\sigma) \right \rangle \left \langle \frac{\mu'}{|\mu'|}, n(\sigma) \right \rangle \right ) e_{\mu}(\sigma) e_{-\mu'}(\sigma)\,d\sigma.
\end{eqnarray}

Let us study the remaining terms on the r.h.s. of \paref{4chaos}: 
\begin{eqnarray}\label{ccc1}
  && \frac{\beta_0\alpha_{2,2}}{2! 2!} \int_{\Sigma} H_2(Z_1(
\sigma)) H_2(Z_2(
\sigma))\,d\sigma + \frac{\beta_0 \alpha_{4,0}}{4!}\int_{\Sigma} H_4(Z_1(\sigma))\,d\sigma + \frac{\beta_0 \alpha_{4,0}}{4!}\int_{\Sigma} H_4(Z_2(\sigma))\,d\sigma \cr
  && = -\frac{1}{16\cdot 8}\left (\int_{\Sigma} [ (Z_1(\sigma)^2 + Z_2(\sigma)^2)^2 - 8]\,d\sigma -8 \int_\Sigma  [Z_1(\sigma)^2 + Z_2(\sigma)^2 -2]\, d\sigma\right ).
\end{eqnarray}
It suffices to deal with the first summand on the r.h.s. of (\ref{ccc1}) since the second term has been already investigated in (\ref{hola4}). We have 
\begin{eqnarray*}
&&\int_{\Sigma} [ (Z_1(\sigma)^2 + Z_2(\sigma)^2)^2 - 8]\, d\sigma \cr
 &&= \int_{\Sigma} \big [ \Big (  b_1 (\widetilde \partial_1 F)^2 + b_2 (\widetilde \partial_2 F)^2 +  b_3 (\widetilde \partial_3 F)^2 + b_{12} \widetilde \partial_1 F \widetilde \partial_2 F + b_{13} \widetilde \partial_1 F \widetilde \partial_3 F + b_{23} \widetilde\partial_2 F \widetilde \partial_3 F 
\Big )^2 - 8 \Big ]\,d\sigma \cr
      && = \int_{\Sigma} \Big [ b_1^2 (\widetilde \partial_1 F)^4 +  b_2^2 (\widetilde \partial_2 F)^4 +  b_3^2 (\widetilde \partial_3 F)^4 \cr
      &&+ b_{12}^2 (\widetilde \partial_1 F )^2(\widetilde \partial_2 F)^2 + b_{13}^2 (\widetilde \partial_1 F )^2(\widetilde \partial_3 F)^2 + b_{23}^2 (\widetilde \partial_2 F )^2(\widetilde \partial_3 F)^2 \cr
&&+ 2 b_1 b_2 (\widetilde \partial_1 F)^2(\widetilde \partial_2 F)^2 + 2 b_1 b_3 (\widetilde \partial_1 F)^2(\widetilde \partial_3 F)^2 \cr
 && + 2b_1 b_{12} (\widetilde \partial_1 F)^3 \widetilde \partial_2 F + 2b_1 b_{13} (\widetilde \partial_1 F)^3 \widetilde \partial_3 F  + 2b_1 b_{23} (\widetilde \partial_1 F)^2 \widetilde \partial_2 F \widetilde \partial_3 F \cr
&&+  2 b_2 b_3 (\widetilde \partial_2 F)^2(\widetilde \partial_3 F)^2 \cr
 && + 2b_2 b_{12} (\widetilde \partial_2 F)^3 \widetilde \partial_1 F + 2b_2 b_{13} (\widetilde \partial_2 F)^2 \widetilde \partial_1 F\widetilde \partial_3 F  + 2b_2 b_{23} (\widetilde \partial_2 F)^3 \widetilde \partial_3 F \cr
  && + 2b_3 b_{12} (\widetilde \partial_3 F)^2 \widetilde \partial_1 F \widetilde \partial_2 F  \widetilde \partial_3 F + 2b_3 b_{13} (\widetilde \partial_3 F)^3 \widetilde \partial_1 F  + 2b_3 b_{23} (\widetilde \partial_3 F)^3 \widetilde \partial_2 F \cr
 && + 2 b_{12}b_{13} (\widetilde \partial_1 F)^2 \widetilde \partial_2 F \widetilde \partial_3 F + 2 b_{12}b_{23} (\widetilde \partial_2 F)^2 \widetilde \partial_1 F \widetilde \partial_3 F + 2 b_{13}b_{23} (\widetilde \partial_3 F)^2 \widetilde \partial_1 F \widetilde \partial_2 F - 8 \Big ]\, d\sigma. 
\end{eqnarray*}
We start with 
\begin{eqnarray}\label{crazy1}
\int_{\Sigma}  b_1^2 (\widetilde \partial_1 F)^4\, d\sigma &=& \frac{(4\pi^2)^2}{M^2} \sum_{\mu,\mu',\mu'',\mu'''} \mu_1 \mu'_1 \mu''_1 \mu'''_1 a_\mu \overline{a_{\mu'}} a_{\mu''} \overline{a_{\mu'''}} \int_{\Sigma} b_1(\sigma)^2 \e_{\mu-\mu'+\mu''-\mu'''}(\sigma)\,d\sigma \cr
 &=& 3 \frac{(4\pi^2)^2}{M^2} \sum_{\mu,\mu'} \mu_1^2 (\mu'_1)^2 |a_\mu |^2 |a_{\mu'}|^2 \int_{\Sigma} b_1(\sigma)^2 \,d\sigma\cr
 &&- 3 \frac{(4\pi^2)^2}{M^2} \sum_{\mu} \mu_1^4  |a_\mu |^4 \int_{\Sigma} b_1(\sigma)^2 \,d\sigma\cr
 &&+ \frac{(4\pi^2)^2}{M^2} \sum_{(\mu,\mu',\mu'',\mu''')\in \mathcal X(4)} \mu_1 \mu'_1 \mu''_1 \mu'''_1 a_\mu \overline{a_{\mu'}} a_{\mu''} \overline{a_{\mu'''}} \int_{\Sigma} b_1(\sigma)^2 \,d\sigma \cr
 &&+ \frac{(4\pi^2)^2}{M^2} \sum_{(\mu,\mu',\mu'',\mu''')\notin \mathcal C(4)} \mu_1 \mu'_1 \mu''_1 \mu'''_1 a_\mu \overline{a_{\mu'}} a_{\mu''} \overline{a_{\mu'''}}\cr
 &&\times \int_{\Sigma} b_1(\sigma)^2\e_{\mu-\mu'+\mu''-\mu'''}(\sigma) \,d\sigma
\end{eqnarray}
and we set
\begin{eqnarray}
R_{1111}(3)&:=& \frac{(4\pi^2)^2}{M^2} \sum_{(\mu,\mu',\mu'',\mu''')\in \mathcal X(4)} \mu_1 \mu'_1 \mu''_1 \mu'''_1 a_\mu \overline{a_{\mu'}} a_{\mu''} \overline{a_{\mu'''}} \int_{\Sigma} b_1(\sigma)^2 \,d\sigma \cr
 &&+ \frac{(4\pi^2)^2}{M^2} \sum_{(\mu,\mu',\mu'',\mu''')\notin \mathcal C(4)} \mu_1 \mu'_1 \mu''_1 \mu'''_1 a_\mu \overline{a_{\mu'}} a_{\mu''} \overline{a_{\mu'''}}\cr
 &&\times \int_{\Sigma} b_1(\sigma)^2\e_{\mu-\mu'+\mu''-\mu'''}(\sigma) \,d\sigma
\end{eqnarray}
Then we write 
\begin{eqnarray}\label{crazy2} 
 \int_{\Sigma}  b_{12}^2 (\widetilde \partial_1 F)^2(\widetilde \partial_2 F)^2\, d\sigma & = & \frac{(4\pi^2)^2}{M^2} \sum_{\mu,\mu',\mu'',\mu'''} \mu_1 \mu'_1 \mu''_2 \mu'''_2 a_\mu \overline{a_{\mu'}} a_{\mu''} \overline{a_{\mu'''}} \int_{\Sigma} b_{12}(\sigma)^2 \e_{\mu-\mu'+\mu''-\mu'''}(\sigma)\,d\sigma \cr
 & = & \frac{(4\pi^2)^2}{M^2} \sum_{\mu,\mu'} \mu_1^2 (\mu'_2)^2 |a_\mu |^2 |a_{\mu'}|^2 \int_{\Sigma} b_{12}(\sigma)^2 \,d\sigma \cr  
 && + 2 \cdot \frac{(4\pi^2)^2}{M^2} \sum_{\mu,\mu'} \mu_1 \mu_2 \mu'_1 \mu'_2 |a_\mu |^2 |a_{\mu'}|^2 \int_{\Sigma} b_{12}(\sigma)^2 \,d\sigma\cr  
 && - 3 \frac{(4\pi^2)^2}{M^2} \sum_{\mu} \mu_1^2 \mu_2^2 |a_\mu |^4\int_{\Sigma} b_{12}(\sigma)^2 \,d\sigma\cr
 && +\frac{(4\pi^2)^2}{M^2} \sum_{(\mu,\mu',\mu'',\mu''')\in \mathcal X(4)} \mu_1 \mu'_1 \mu''_2 \mu'''_2 a_\mu \overline{a_{\mu'}} a_{\mu''} \overline{a_{\mu'''}} \int_{\Sigma} b_{12}(\sigma)^2 \,d\sigma \cr
 &&+\frac{(4\pi^2)^2}{M^2} \sum_{(\mu,\mu',\mu'',\mu''')\notin \mathcal C(4)} \mu_1 \mu'_1 \mu''_2 \mu'''_2 a_\mu \overline{a_{\mu'}} a_{\mu''} \overline{a_{\mu'''}}\cr
&& \times \int_{\Sigma} b_{12}(\sigma)^2 \e_{\mu-\mu'+\mu''-\mu'''}(\sigma)\,d\sigma
\end{eqnarray}
and we define
\begin{eqnarray}
R_{1212}(3)&:=&\frac{(4\pi^2)^2}{M^2} \sum_{(\mu,\mu',\mu'',\mu''')\in \mathcal X(4)} \mu_1 \mu'_1 \mu''_2 \mu'''_2 a_\mu \overline{a_{\mu'}} a_{\mu''} \overline{a_{\mu'''}} \int_{\Sigma} b_{12}(\sigma)^2 \,d\sigma \cr
 &&+\frac{(4\pi^2)^2}{M^2} \sum_{(\mu,\mu',\mu'',\mu''')\notin \mathcal C(4)} \mu_1 \mu'_1 \mu''_2 \mu'''_2 a_\mu \overline{a_{\mu'}} a_{\mu''} \overline{a_{\mu'''}}\cr
&& \times \int_{\Sigma} b_{12}(\sigma)^2 \e_{\mu-\mu'+\mu''-\mu'''}(\sigma)\,d\sigma.
\end{eqnarray}
Moreover 
\begin{eqnarray}\label{crazy3}
\int_{\Sigma}  b_1 b_{12} (\widetilde \partial_1 F)^3 (\widetilde \partial_2 F)\, d\sigma & = & \frac{(4\pi^2)^2}{M^2} \sum_{\mu,\mu',\mu'',\mu'''} \mu_1 \mu'_1 \mu''_1 \mu'''_2 a_\mu \overline{a_{\mu'}} a_{\mu''} \overline{a_{\mu'''}}\cr
&&\times \int_{\Sigma} b_1(\sigma)  b_{12}(\sigma) \e_{\mu-\mu'+\mu''-\mu'''}(\sigma)\,d\sigma \cr
 & = & 3 \frac{(4\pi^2)^2}{M^2} \sum_{\mu,\mu'} \mu_1^2 \mu_1' \mu'_2 |a_\mu |^2 |a_{\mu'}|^2 \int_{\Sigma} b_1(\sigma)  b_{12}(\sigma) \,d\sigma \cr
 && -  3 \frac{(4\pi^2)^2}{M^2} \sum_{\mu} \mu_1^3 \mu_2 |a_\mu |^4 \int_{\Sigma} b_1(\sigma)  b_{12}(\sigma) \,d\sigma \cr
&& + \frac{(4\pi^2)^2}{M^2} \sum_{(\mu,\mu',\mu'',\mu''')\in \mathcal X(4)} \mu_1 \mu'_1 \mu''_1 \mu'''_2 a_\mu \overline{a_{\mu'}} a_{\mu''} \overline{a_{\mu'''}}\cr
&&\times \int_{\Sigma} b_1(\sigma)  b_{12}(\sigma) \,d\sigma \cr
&& + \frac{(4\pi^2)^2}{M^2} \sum_{(\mu,\mu',\mu'',\mu''')\notin \mathcal C(4)} \mu_1 \mu'_1 \mu''_1 \mu'''_2 a_\mu \overline{a_{\mu'}} a_{\mu''} \overline{a_{\mu'''}}\cr
&&\times \int_{\Sigma} b_1(\sigma)  b_{12}(\sigma) \e_{\mu-\mu'+\mu''-\mu'''}(\sigma)\,d\sigma
\end{eqnarray}
and as before we set
\begin{eqnarray}
R_{1112}(3)&:=& \frac{(4\pi^2)^2}{M^2} \sum_{(\mu,\mu',\mu'',\mu''')\in \mathcal X(4)} \mu_1 \mu'_1 \mu''_1 \mu'''_2 a_\mu \overline{a_{\mu'}} a_{\mu''} \overline{a_{\mu'''}}\cr
&&\times \int_{\Sigma} b_1(\sigma)  b_{12}(\sigma) \,d\sigma \cr
&& + \frac{(4\pi^2)^2}{M^2} \sum_{(\mu,\mu',\mu'',\mu''')\notin \mathcal C(4)} \mu_1 \mu'_1 \mu''_1 \mu'''_2 a_\mu \overline{a_{\mu'}} a_{\mu''} \overline{a_{\mu'''}}\cr
&&\times \int_{\Sigma} b_1(\sigma)  b_{12}(\sigma) \e_{\mu-\mu'+\mu''-\mu'''}(\sigma)\,d\sigma.
\end{eqnarray}
Finally we have 
\begin{eqnarray}\label{crazy4}
 \int_{\Sigma}  b_1 b_{23} (\widetilde \partial_1 F)^2 (\widetilde \partial_2 F)(\widetilde \partial_3 F)\, d\sigma & = & \frac{(4\pi^2)^2}{M^2} \sum_{\mu,\mu',\mu'',\mu'''} \mu_1 \mu'_1 \mu''_2 \mu'''_3 a_\mu \overline{a_{\mu'}} a_{\mu''} \overline{a_{\mu'''}} \cr
 &&\times \int_{\Sigma} b_1(\sigma)  b_{23}(\sigma) \e_{\mu-\mu'+\mu''-\mu'''}(\sigma)\,d\sigma \cr
 & =&   \frac{(4\pi^2)^2}{M^2} \sum_{\mu,\mu'} \mu_1^2 \mu_2' \mu'_3 |a_\mu |^2 |a_{\mu'}|^2 \int_{\Sigma} b_1(\sigma)  b_{23}(\sigma) \,d\sigma \cr  
 && +2 \frac{(4\pi^2)^2}{M^2} \sum_{\mu,\mu'} \mu_1 \mu_2 \mu_1' \mu'_3 |a_\mu |^2 |a_{\mu'}|^2 \int_{\Sigma} b_1(\sigma)  b_{23}(\sigma) \,d\sigma \cr
 && -3 \frac{(4\pi^2)^2}{M^2} \sum_{\mu} \mu_1^2 \mu_2  \mu_3 |a_\mu |^4 \int_{\Sigma} b_1(\sigma)  b_{23}(\sigma) \,d\sigma\cr
 && + \frac{(4\pi^2)^2}{M^2} \sum_{(\mu,\mu',\mu'',\mu''')\in \mathcal X(4)} \mu_1 \mu'_1 \mu''_2 \mu'''_3 a_\mu \overline{a_{\mu'}} a_{\mu''} \overline{a_{\mu'''}} \cr
 &&\times \int_{\Sigma} b_1(\sigma)  b_{23}(\sigma)\,d\sigma \cr
 && + \frac{(4\pi^2)^2}{M^2} \sum_{(\mu,\mu',\mu'',\mu''')\notin \mathcal C(4)} \mu_1 \mu'_1 \mu''_2 \mu'''_3 a_\mu \overline{a_{\mu'}} a_{\mu''} \overline{a_{\mu'''}} \cr
 &&\times \int_{\Sigma} b_1(\sigma)  b_{23}(\sigma) \e_{\mu-\mu'+\mu''-\mu'''}(\sigma)\,d\sigma
\end{eqnarray}
and 
\begin{eqnarray}
R_{1123}(3)&:=&\frac{(4\pi^2)^2}{M^2} \sum_{(\mu,\mu',\mu'',\mu''')\in \mathcal X(4)} \mu_1 \mu'_1 \mu''_2 \mu'''_3 a_\mu \overline{a_{\mu'}} a_{\mu''} \overline{a_{\mu'''}} \cr
 &&\times \int_{\Sigma} b_1(\sigma)  b_{23}(\sigma)\,d\sigma \cr
 && + \frac{(4\pi^2)^2}{M^2} \sum_{(\mu,\mu',\mu'',\mu''')\notin \mathcal C(4)} \mu_1 \mu'_1 \mu''_2 \mu'''_3 a_\mu \overline{a_{\mu'}} a_{\mu''} \overline{a_{\mu'''}} \cr
 &&\times \int_{\Sigma} b_1(\sigma)  b_{23}(\sigma) \e_{\mu-\mu'+\mu''-\mu'''}(\sigma)\,d\sigma.
\end{eqnarray}
Define $R_{ijkl}$ for $i\le j, k\le l$ all in $\lbrace 1,2,3 \rbrace$ analogously. 
Substituting (\ref{crazy1})-(\ref{crazy4}) and (\ref{hola4}) into (\ref{ccc1}) we get 
\begin{eqnarray}\label{2020}
&& \int_{\Sigma} [ (Z_1(\sigma)^2 + Z_2(\sigma)^2)^2 - 8 -8(Z_1(\sigma)^2 + Z_2(\sigma)^2) + 16]\, d\sigma  \cr
 && = 3 \frac{(4\pi^2)^2}{M^2} \sum_{\mu,\mu'} \mu_1^2 (\mu'_1)^2 |a_\mu |^2 |a_{\mu'}|^2 \int_{\Sigma} (1 - n_1(\sigma)^2)^2 \,d\sigma - 3 \frac{(4\pi^2)^2}{M^2} \sum_{\mu} \mu_1^4  |a_\mu |^4 \int_{\Sigma} (1 - n_1(\sigma)^2)^2 \,d\sigma \cr 
&& + 3 \frac{(4\pi^2)^2}{M^2} \sum_{\mu,\mu'} \mu_2^2 (\mu'_2)^2 |a_\mu |^2 |a_{\mu'}|^2 \int_{\Sigma} (1 - n_2(\sigma)^2)^2 \,d\sigma - 3 \frac{(4\pi^2)^2}{M^2} \sum_{\mu} \mu_2^4  |a_\mu |^4 \int_{\Sigma} (1 - n_2(\sigma)^2)^2 \,d\sigma \cr
 && + 3 \frac{(4\pi^2)^2}{M^2} \sum_{\mu,\mu'} \mu_3^2 (\mu'_3)^2 |a_\mu |^2 |a_{\mu'}|^2 \int_{\Sigma} (1 - n_3(\sigma)^2)^2 \,d\sigma - 3 \frac{(4\pi^2)^2}{M^2} \sum_{\mu} \mu_3^4  |a_\mu |^4 \int_{\Sigma} (1 - n_3(\sigma)^2)^2 \,d\sigma \cr
 && + \frac{(4\pi^2)^2}{M^2} \sum_{\mu,\mu'} \mu_1^2 (\mu'_2)^2 |a_\mu |^2 |a_{\mu'}|^2 \int_{\Sigma} ((-2n_1n_2)^2 + 2(1 - n_1(\sigma)^2) (1 - n_2(\sigma)^2)) \,d\sigma  \cr
 &&+  2 \cdot \frac{(4\pi^2)^2}{M^2} \sum_{\mu,\mu'} \mu_1 \mu_2 \mu'_1 \mu'_2 |a_\mu |^2 |a_{\mu'}|^2 \int_{\Sigma} ((-2n_1n_2)^2 + 2(1 - n_1(\sigma)^2) (1 - n_2(\sigma)^2))\,d\sigma\cr  
 && - 3 \frac{(4\pi^2)^2}{M^2} \sum_{\mu} \mu_1^2 \mu_2^2 |a_\mu |^4\int_{\Sigma} ((-2n_1n_2)^2 + 2(1 - n_1(\sigma)^2) (1 - n_2(\sigma)^2)) \,d\sigma \cr
 && + \frac{(4\pi^2)^2}{M^2} \sum_{\mu,\mu'} \mu_1^2 (\mu'_3)^2 |a_\mu |^2 |a_{\mu'}|^2 \int_{\Sigma} ((-2n_1n_3)(\sigma)^2 + 2(1 - n_1(\sigma)^2) (1-n_3^2)) \,d\sigma  \cr
 &&+  2 \cdot \frac{(4\pi^2)^2}{M^2} \sum_{\mu,\mu'} \mu_1 \mu_3 \mu'_1 \mu'_3 |a_\mu |^2 |a_{\mu'}|^2 \int_{\Sigma} ((-2n_1n_3)(\sigma)^2 + 2(1 - n_1(\sigma)^2)(1 - n_3(\sigma)^2))\,d\sigma\cr  
 && - 3 \frac{(4\pi^2)^2}{M^2} \sum_{\mu} \mu_1^2 \mu_3^2 |a_\mu |^4\int_{\Sigma} ((-2n_1n_3)(\sigma)^2 + 2(1 - n_1(\sigma)^2) (1 - n_3(\sigma)^2)) \,d\sigma \cr
 && + \frac{(4\pi^2)^2}{M^2} \sum_{\mu,\mu'} \mu_2^2 (\mu'_3)^2 |a_\mu |^2 |a_{\mu'}|^2 \int_{\Sigma} ((-2n_2n_3)(\sigma)^2 + 2(1 - n_2(\sigma)^2) (1 - n_3(\sigma)^2)) \,d\sigma  \cr
 &&+  2 \cdot \frac{(4\pi^2)^2}{M^2} \sum_{\mu,\mu'} \mu_2 \mu_3 \mu'_2\mu'_3 |a_\mu |^2 |a_{\mu'}|^2 \int_{\Sigma} ((-2n_2n_3)(\sigma)^2 + 2(1 - n_2(\sigma)^2)(1 - n_3(\sigma)^2))\,d\sigma\cr  
 && - 3 \frac{(4\pi^2)^2}{M^2} \sum_{\mu} \mu_2^2 \mu_3^2 |a_\mu |^4\int_{\Sigma} ((-2n_2n_3)(\sigma)^2 + 2(1 - n_2(\sigma)^2) (1 - n_3(\sigma)^2)) \,d\sigma \cr
 && + 3 \frac{(4\pi^2)^2}{M^2} \sum_{\mu,\mu'} \mu_1^2 \mu_1' \mu'_2 |a_\mu |^2 |a_{\mu'}|^2 \int_{\Sigma} 2(1 - n_1(\sigma)^2)  (-2n_1n_2) \,d\sigma \cr  
 && -  3 \frac{(4\pi^2)^2}{M^2} \sum_{\mu} \mu_1^3 \mu_2 |a_\mu |^4 \int_{\Sigma}2 (1 - n_1(\sigma)^2)  (-2n_1n_2) \,d\sigma \cr
  && + 3 \frac{(4\pi^2)^2}{M^2} \sum_{\mu,\mu'} \mu_1^2 \mu_1' \mu'_3 |a_\mu |^2 |a_{\mu'}|^2 \int_{\Sigma} 2(1 - n_1(\sigma)^2) (-2n_1n_3) \,d\sigma \cr  
 && -  3 \frac{(4\pi^2)^2}{M^2} \sum_{\mu} \mu_1^3 \mu_3 |a_\mu |^4 \int_{\Sigma} 2(1 - n_1(\sigma)^2)  (-2n_1n_3) \,d\sigma \cr
  && + 3 \frac{(4\pi^2)^2}{M^2} \sum_{\mu,\mu'} \mu_2^2 \mu_2' \mu'_3 |a_\mu |^2 |a_{\mu'}|^2 \int_{\Sigma} 2(1 - n_2(\sigma)^2)  (-2n_2n_3) \,d\sigma \cr 
  &&-  3 \frac{(4\pi^2)^2}{M^2} \sum_{\mu} \mu_2^3 \mu_3 |a_\mu |^4 \int_{\Sigma} 2(1 - n_2(\sigma)^2)  (-2n_2n_3) \,d\sigma \cr
  && + 3 \frac{(4\pi^2)^2}{M^2} \sum_{\mu,\mu'} \mu_2^2 \mu_2' \mu'_1 |a_\mu |^2 |a_{\mu'}|^2 \int_{\Sigma} 2(1 - n_2(\sigma)^2)  (-2n_1n_2) \,d\sigma  \cr
  && -  3 \frac{(4\pi^2)^2}{M^2} \sum_{\mu} \mu_2^3 \mu_1 |a_\mu |^4 \int_{\Sigma} 2(1 - n_2(\sigma)^2) (-2n_1n_2) \,d\sigma \cr
   && + 3 \frac{(4\pi^2)^2}{M^2} \sum_{\mu,\mu'} \mu_3^2 \mu_3' \mu'_1 |a_\mu |^2 |a_{\mu'}|^2 \int_{\Sigma} 2(1 - n_3(\sigma)^2)  b_{13}(\sigma) \,d\sigma \cr 
   &&-  3 \frac{(4\pi^2)^2}{M^2} \sum_{\mu} \mu_3^3 \mu_1 |a_\mu |^4 \int_{\Sigma} 2(1 - n_3(\sigma)^2) (-2n_1n_3) \,d\sigma \cr
   && + 3 \frac{(4\pi^2)^2}{M^2} \sum_{\mu,\mu'} \mu_3^2 \mu_3' \mu'_2 |a_\mu |^2 |a_{\mu'}|^2 \int_{\Sigma} 2(1 - n_3(\sigma)^2)  b_{23}(\sigma) \,d\sigma \cr 
   &&-  3 \frac{(4\pi^2)^2}{M^2} \sum_{\mu} \mu_3^3 \mu_2 |a_\mu |^4 \int_{\Sigma} 2(1 - n_3(\sigma)^2)  (-2n_2n_3) \,d\sigma \cr
    && +   \frac{(4\pi^2)^2}{M^2} \sum_{\mu,\mu'} \mu_1^2 \mu_2' \mu'_3 |a_\mu |^2 |a_{\mu'}|^2 \int_{\Sigma} 2(1 - n_1(\sigma)^2) (-2n_2n_3) + 2(-2n_1n_2)(-2n_1n_3) \,d\sigma \cr  
 && +2 \frac{(4\pi^2)^2}{M^2} \sum_{\mu,\mu'} \mu_1 \mu_2 \mu_1' \mu'_3 |a_\mu |^2 |a_{\mu'}|^2 \int_{\Sigma} 2(1 - n_1(\sigma)^2)  (-2n_2n_3) + 2(-2n_1n_2)b_{13} \,d\sigma \cr
 && -3 \frac{(4\pi^2)^2}{M^2} \sum_{\mu} \mu_1^2 \mu_2  \mu_3 |a_\mu |^4 \int_{\Sigma} 2(1 - n_1(\sigma)^2)  (-2n_2n_3) + 2(-2n_1n_2) (-2n_1n_3)  \,d\sigma \cr
 && +   \frac{(4\pi^2)^2}{M^2} \sum_{\mu,\mu'} \mu_2^2 \mu_1' \mu'_3 |a_\mu |^2 |a_{\mu'}|^2 \int_{\Sigma} 2(1 - n_2(\sigma)^2)  b_{13}(\sigma) + 2(-2n_1n_2)(-2n_2n_3) \,d\sigma \cr  
 && +2 \frac{(4\pi^2)^2}{M^2} \sum_{\mu,\mu'} \mu_2 \mu_1 \mu_2' \mu'_3 |a_\mu |^2 |a_{\mu'}|^2 \int_{\Sigma} 2(1 - n_2(\sigma)^2)  (-2n_1n_3)  + 2(-2n_1n_2)(-2n_2n_3) \,d\sigma \cr
 && -3 \frac{(4\pi^2)^2}{M^2} \sum_{\mu} \mu_2^2 \mu_1  \mu_3 |a_\mu |^4 \int_{\Sigma} 2(1 - n_2(\sigma)^2)  b_{13}(\sigma) + 2(-2n_1n_2)(-2n_2n_3) \,d\sigma \cr
 && +   \frac{(4\pi^2)^2}{M^2} \sum_{\mu,\mu'} \mu_3^2 \mu_1' \mu'_2 |a_\mu |^2 |a_{\mu'}|^2 \int_{\Sigma} 2(1 - n_3(\sigma)^2)  (-2n_1n_2) + 2(-2n_1n_3) (-2n_2n_3) \,d\sigma \cr  
 && +2 \frac{(4\pi^2)^2}{M^2} \sum_{\mu,\mu'} \mu_3 \mu_1 \mu_3' \mu'_2 |a_\mu |^2 |a_{\mu'}|^2 \int_{\Sigma} 2(1 - n_3(\sigma)^2)  (-2n_1n_2) + 2(-2n_1n_3) (-2n_2n_3) \,d\sigma \cr
 && -3 \frac{(4\pi^2)^2}{M^2} \sum_{\mu} \mu_3^2 \mu_1  \mu_2 |a_\mu |^4 \int_{\Sigma} 2(1 - n_3(\sigma)^2) (-2n_1n_2) + 2(-2n_1n_3) (-2n_2n_3) \,d\sigma - 8\int_{\Sigma} d\sigma \cr
&& - 8 \cdot 3  \frac{1}{N} \sum_{\mu} (|a_\mu |^2  -1)\left (|\Sigma |  - \int_\Sigma \left \langle \frac{\mu}{|\mu|}, n(\sigma)\right \rangle^2 \, d\sigma \right )\cr
&&- 8\cdot 3\frac{1}{N} \sum_{\mu\ne\mu'} a_\mu \overline{a_{\mu'}}  \int_{\Sigma}\left (\left \langle \frac{\mu}{|\mu|}, \frac{\mu'}{|\mu'|} \right \rangle - \left \langle \frac{\mu}{|\mu|}, n(\sigma) \right \rangle \left \langle \frac{\mu'}{|\mu'|}, n(\sigma) \right \rangle \right ) e_{\mu}(\sigma) e_{-\mu'}(\sigma)\,d\sigma\cr
&& + \sum_{i\le j} R_{ijij}(3) + 2 \sum_{\substack{i\le j, k\le l\\ (i,j) < (k,l)}} R_{ijkl}(3) - 8 R(0).
\end{eqnarray}
Equation (\ref{2020}) simplifies to 
\begin{eqnarray}\label{super1}
&& \int_{\Sigma} [ (Z_1(\sigma)^2 + Z_2(\sigma)^2)^2 - 8 -8(Z_1(\sigma)^2 + Z_2(\sigma)^2) + 16]\, d\sigma  \cr
 && =  \frac{9}{N^2}\sum_{\mu,\mu'}(|a_\mu |^2 - 1)(|a_{\mu'} |^2 - 1)\int_\Sigma \left ( \left \langle \frac{\mu}{|\mu|}, \frac{\mu'}{|\mu'|} \right \rangle - \left \langle \frac{\mu}{|\mu|}, n(\sigma) \right \rangle \left \langle \frac{\mu'}{|\mu'|}, n(\sigma) \right \rangle \right )^2\,d\sigma \cr
 &&-\frac{9}{N^2}\sum_{\mu}|a_\mu |^4 \int_\Sigma \left ( 1 - \left \langle \frac{\mu}{|\mu|}, n(\sigma) \right \rangle^2 \right )^2\,d\sigma\cr
 && -   \frac{8 \cdot 3}{N} \sum_{\mu} (|a_\mu |^2  -1)\left (|\Sigma |  - \int_\Sigma \left \langle \frac{\mu}{|\mu|}, n(\sigma)\right \rangle^2 \, d\sigma \right )\cr
&&- \frac{8\cdot 3}{N} \sum_{\mu\ne\mu'} a_\mu \overline{a_{\mu'}}  \int_{\Sigma}\left (\left \langle \frac{\mu}{|\mu|}, \frac{\mu'}{|\mu'|} \right \rangle - \left \langle \frac{\mu}{|\mu|}, n(\sigma) \right \rangle \left \langle \frac{\mu'}{|\mu'|}, n(\sigma) \right \rangle \right ) e_{\mu}(\sigma) e_{-\mu'}(\sigma)\,d\sigma\cr
&& + \sum_{i\le j} R_{ijij}(3) + 2 \sum_{\substack{i\le j, k\le l\\ (i,j) < (k,l)}} R_{ijkl}(3) - 8 R(0).
\end{eqnarray}
It follows that the fourth chaotic component can be rewritten as 
\begin{eqnarray}\label{4chaos2}
\mathcal L[4] &=& \sqrt M \Big ( \frac{\beta_4\alpha_{0,0}}{4!}\int_{\Sigma} H_4(F(\sigma))\,d\sigma\cr   
&&+\frac{\beta_2\alpha_{2,0}}{2! 2!}\int_{\Sigma} H_2(F(\sigma)) H_2(Z_1(\sigma))\,d\sigma +\frac{\beta_2\alpha_{0,2}}
{2! 2!}\int_{\Sigma} H_2(F(\sigma)) H_2(Z_2(
\sigma))\,d\sigma\cr
&&+ \frac{\beta_0\alpha_{2,2}}{2! 2!} \int_{\Sigma} H_2(Z_1(
\sigma)) H_2(Z_2(
\sigma))\,d\sigma\cr
&&+ \frac{\beta_0 \alpha_{4,0}}{4!}\int_{\Sigma} H_4(Z_1(\sigma))\,d\sigma + \frac{\beta_0 \alpha_{4,0}}{4!}\int_{\Sigma} H_4(Z_2(\sigma))\,d\sigma \Big )\cr 
& =& \sqrt{\frac{4\pi^2m}{3}} \Big [  \frac{3}{16} \cdot |\Sigma | \frac{1}{N^2}
\sum_{\mu,\mu'}  (|a_\mu |^2 - 1) (|a_{\mu'} |^2 - 1)
  -  \frac{1}{16} \cdot  3 
\frac{1}{N^2} \sum_{\mu
} |a_\mu |^4
\cr 
&& - \frac{1}{16} \cdot  3 \frac{1}{N^2} \sum_{\mu, \mu'} (|a_\mu |^2-1)( |a_{\mu'}|^2 -1)\Big (|\Sigma | -   \int_\Sigma \left \langle \frac{\mu}{|\mu|}, n(\sigma)\right \rangle^2 \, d\sigma        \Big ) \cr 
 && + \frac{1}{16} \cdot 3 \frac{1}{N^2} \sum_{\mu} |a_\mu |^4\Big (|\Sigma | -   \int_\Sigma \left \langle \frac{\mu}{|\mu|}, n(\sigma)\right \rangle^2 \, d\sigma        \Big ) \cr 
&& -\frac{1}{16\cdot 8} \frac{9}{N^2}\sum_{\mu,\mu'}(|a_\mu |^2 - 1)(|a_{\mu'} |^2 - 1)\int_\Sigma \left ( \left \langle \frac{\mu}{|\mu|}, \frac{\mu'}{|\mu'|} \right \rangle - \left \langle \frac{\mu}{|\mu|}, n(\sigma) \right \rangle \left \langle \frac{\mu'}{|\mu'|}, n(\sigma) \right \rangle \right )^2\,d\sigma
\cr 
&& +\frac{1}{16\cdot 8} \cdot \frac{9}{N^2}\sum_{\mu}|a_\mu |^4\int_\Sigma \left ( 1 - \left \langle \frac{\mu}{|\mu|}, n(\sigma) \right \rangle^2 \right )^2\,d\sigma
\cr
&& + a_0 R(0) + a_1 R(1) + a_2 R(2) + \sum_{i\le j}a_{ijij}R_{ijij}(3) + \sum_{\substack{i\le j, k\le l\\(i,j)< (k,l)}} a_{ijkl}R_{ijkl}(3)\Big]\cr
& = &\sqrt{\frac{4\pi^2m}{3}} \Big [\frac{1}{16} \cdot  3 \frac{1}{N^2} \sum_{\mu, \mu'} (|a_\mu |^2-1)( |a_{\mu'}|^2 -1)\Big (\int_\Sigma \left \langle \frac{\mu}{|\mu|}, n(\sigma)\right \rangle^2 \, d\sigma        \Big ) \cr 
 && - \frac{1}{16} \cdot 3 \frac{1}{N^2} \sum_{\mu} |a_\mu |^4\Big (\int_\Sigma \left \langle \frac{\mu}{|\mu|}, n(\sigma)\right \rangle^2 \, d\sigma        \Big ) \cr 
&& -\frac{1}{16\cdot 8} \frac{9}{N^2}\sum_{\mu,\mu'}(|a_\mu |^2 - 1)(|a_{\mu'} |^2 - 1)\int_{\Sigma} \left ( \left \langle \frac{\mu}{|\mu|}, \frac{\mu'}{|\mu'|}\right \rangle - 
 \left \langle \frac{\mu}{|\mu|}, n(\sigma)\right \rangle \left \langle \frac{\mu'}{|\mu'|}, n(\sigma)\right \rangle\right )^2\,d\sigma
\cr 
&& +\frac{1}{16\cdot 8} \cdot \frac{9}{N^2}\sum_{\mu}|a_\mu |^4\int_{\Sigma} \left (1 - \left \langle \frac{\mu}{|\mu|}, n(\sigma)\right \rangle^2  \right )^2\,d\sigma\cr 
&&a_0 R(0) + a_1 R(1) + a_2 R(2) + \sum_{i\le j}a_{ijij}R_{ijij}(3) + \sum_{\substack{i\le j, k\le l\\(i,j)< (k,l)}} a_{ijkl}R_{ijkl}(3)
\Big],
\end{eqnarray}
where the coefficients $a_0,a_1,a_2,a_{ijkl}$ are universal constants that can be made explicit. 
We can rewrite it as 
%
%
\begin{eqnarray*}
\mathcal L[4]  &=& \sqrt{\frac{4\pi^2m}{3}}   \frac{3}{16 \cdot 8} \frac{1}{N^2}  
\sum_{\mu,\mu'}  (|a_\mu |^2 - 1) (|a_{\mu'} |^2 - 1) \times 
\cr 
&&  \times \Big [\int_\Sigma \Big ( -3 -9\left \langle \frac{\mu}{|\mu|}, n(\sigma)\right \rangle^2 \left \langle \frac{\mu'}{|\mu'|}, n(\sigma)\right \rangle^2 +14\left \langle \frac{\mu}{|\mu|}, n(\sigma)\right \rangle^2-6\left \langle \frac{\mu}{|\mu|}, \frac{\mu'}{|\mu'|}\right \rangle^2
\cr
&&+12\left \langle \frac{\mu}{|\mu|}, \frac{\mu'}{|\mu'|}\right \rangle \left \langle \frac{\mu}{|\mu|}, n\right \rangle \left \langle \frac{\mu'}{|\mu'|},n\right \rangle
\Big )d\sigma  \Big ] \cr
&&-\sqrt{\frac{4\pi^2m}{3}}   \frac{3}{16 \cdot 8} \frac{1}{N^2}  
\sum_{\mu}  (|a_\mu |^4) \times 
\cr 
&&  \times \Big [\int_\Sigma \Big ( -3 -9\left \langle \frac{\mu}{|\mu|}, n(\sigma)\right \rangle^2 \left \langle \frac{\mu'}{|\mu'|}, n(\sigma)\right \rangle^2 +14\left \langle \frac{\mu}{|\mu|}, n(\sigma)\right \rangle^2-6\left \langle \frac{\mu}{|\mu|}, \frac{\mu'}{|\mu'|}\right \rangle^2
\cr
&&+12\left \langle \frac{\mu}{|\mu|}, \frac{\mu'}{|\mu'|}\right \rangle \left \langle \frac{\mu}{|\mu|}, n\right \rangle \left \langle \frac{\mu'}{|\mu'|},n\right \rangle
\Big )d\sigma  \Big ] \cr
&& + \sqrt{\frac{4\pi^2m}{3}}\Big(a_0 R(0) + a_1 R(1) + a_2 R(2) + \sum_{i\le j}a_{ijij}R_{ijij}(3) + \sum_{\substack{i\le j, k\le l\\(i,j)< (k,l)}} a_{ijkl}R_{ijkl}(3)\Big ).
\end{eqnarray*}
Let us set 
\begin{equation}
    \mathcal L^b[4] =\mathcal L_m^b[4]:=\sqrt{\frac{4\pi^2m}{3}}\Big(a_0 R(0) + a_1 R(1) + a_2 R(2) + \sum_{i\le j}a_{ijij}R_{ijij}(3) + \sum_{\substack{i\le j, k\le l\\(i,j)< (k,l)}} a_{ijkl}R_{ijkl}(3)\Big ),
\end{equation}
then it is easy to check that 
\begin{equation}
    \Var(\mathcal L^b[4]) \ll m \cdot \max \left \lbrace \frac{|\mathcal X(4)|}{N^4}, \mathfrak{S}_4, \mathfrak{S}_2 \right \rbrace 
\end{equation}
so the proof is concluded.
\end{proof}

\bibliographystyle{alpha}
\bibliography{bibfile}
\end{document}